\documentclass[12pt]{amsart}
\usepackage{amssymb}
\usepackage{graphicx}
\usepackage{fullpage}
\usepackage{amsthm}
\usepackage{amsmath}
\usepackage{setspace}
\usepackage{float}
\usepackage{placeins}
\usepackage{color}
\usepackage{leftidx}
\newtheorem{thm}{Theorem}[section]
\newtheorem{cor}[thm]{Corollary}
\newtheorem{lem}[thm]{Lemma}
\newtheorem{prop}[thm]{Proposition}
\theoremstyle{definition}
\newtheorem{defn}[thm]{Definition}
\newtheorem{example}[thm]{Example}
\theoremstyle{remark}
\newtheorem{rem}[thm]{Remark}
\numberwithin{equation}{section}

\begin{document}
\title{New Bernstein functions and subordination representations for time-nonlocal evolution equations}

\author{Calvin Khor}
\address{School of Mathematics, Southwest Jiaotong University, Chengdu 611756, P. R. China. }
\email{calvin\_khor@hotmail.com}

\author{Cheng-Gang Li}
\address{School of Mathematics, Southwest Jiaotong University, Chengdu 611756, P. R. China. }
\email{lichenggang@swjtu.edu.cn}

{\renewcommand{\thefootnote}{} \footnote{2020 {\it Mathematics
Subject Classification.} 26A33, 35R11, 34A08, 45D05, 60G51.
\\ \text{  }  \ \    {\it Key words and phrases.} time-fractional differential equation, subordination principle, Bernstein function, inverse subordinator, Laplace transform.
 }}

\begin{abstract}
This paper establishes subordination representations from multi-term time-fractional differential equations to single-term ones, a direction complementary to the previously known subordination from single-term to multi-term equations; combining the two complementary directions also yields subordination representations between two multi-term equations. The crucial step is to prove that certain implicitly defined functions are Bernstein functions. We show that the quotient $\big(\sum_{j=1}^m a_j\lambda^{\alpha_j}\big)/\big(\sum_{j=1}^m a_j\alpha_j\lambda^{\alpha_j}\big)$ is a Stieltjes function whenever $0\leq\alpha_1-\alpha_m\leq 1$, and consequently that the inverse function of $\lambda=\big(\sum_{j=1}^m a_j\varphi^{\alpha_j}\big)^{1/\beta}$ is a Bernstein function for every $0<\beta\leq\alpha_m$. As applications, we obtain subordination representations with a clear probabilistic meaning: from multi-term to single-term time-fractional diffusion-wave equations, between abstract nonlocal-in-time equations with kernels of type $(\mathcal{PC})$ (including equations of tempered stable type), and between nonlocal-in-time telegraph equations. A necessary and sufficient condition for $[(\lambda+b)^\beta-b^\beta]^\gamma$ to be a Bernstein function also yields a negative result: in general, no such subordination exists from an $\alpha$-stable type dynamics to a tempered $\beta$-stable one when $\alpha<1$.
\end{abstract}
\maketitle


\section{Introduction and preliminaries}\label{sec:intro}

\subsection{Introduction and background}\label{subsec:background}

Fractional differential equations and their nonlocal-in-time generalizations have proved to be powerful tools for modeling memory effects and anomalous transport phenomena, and have found wide applications in physics, engineering, biology, finance and other fields; we refer to the classical monographs \cite{Diethelm2010book,Kilbas2006book,Meerschaert2012book,Podlubny1999book} and the eight-volume handbook series \cite{HFCA2019} for the general theory and applications.
This paper concentrates on subordination representations of time-fractional differential equations with a stochastic interpretation and their nonlocal-in-time generalizations; the crucial ingredients are some new completely monotone functions and Bernstein functions.
Since all of our equations can be transformed into the abstract form of the Volterra equation \cite{Pruss1993book}
\begin{equation}\label{eq:Volterra}
u(t) = x + \int_0^t a (t-s) Au(s) ds, \qquad x\in X, \quad a(\cdot)\in L^1_{loc}(\mathbb{R}_+),
\end{equation}
where $X$ is an appropriate Banach space and $A$ is a closed densely defined linear operator in $X$,
we use \eqref{eq:Volterra} to illustrate our main motivation in a formal way.

Denote the Laplace transform of $a(t)$ by $\widehat{a}(\lambda)=\int_0^\infty e^{-\lambda t}a(t)dt$. Suppose
\[
u(t) = x + \int_0^t a (t-s) Au(s) ds, \qquad v(t) = x + \int_0^t b (t-s) Av(s) ds
\]
are well-posed. If there exists a Bernstein function $\varphi(\lambda)$
such that $\varphi(0+)=0$ and $\widehat{a}(\varphi(\lambda))=\widehat{b}(\lambda)$, then we have the subordination representation
\begin{equation}\label{eq:subordination-general}
v(t)=\int_0^\infty u(s)P_\varphi (ds,t),
\end{equation}
where $P_\varphi (s,t)$ is the distribution function of an inverse subordinator $\mathbf{E}_\varphi(t)$ with Laplace exponent $\varphi$;
see Proposition \ref{prop:Pruss-subordination} below and \cite{Lichenggang2026dual,Pruss1993book} for details. If $P_\varphi (s,t)$ possesses a Lebesgue density $p_\varphi(s,t)$ for each $t>0$, then
$p_\varphi(s,t)$ satisfies $\int_0^\infty e^{-\lambda t}p_\varphi(s,t)dt=\frac{\varphi(\lambda)}{\lambda}e^{-s\varphi(\lambda)}$.

Clearly, a subordination relation \eqref{eq:subordination-general} cannot exist for every pair of kernels $a(\cdot)$ and $b(\cdot)$:
{\it to obtain a subordination representation \eqref{eq:subordination-general}, this construction requires a Bernstein function
$\varphi(\lambda)$ such that $\widehat{a}(\varphi(\lambda))=\widehat{b}(\lambda)$.} Therefore, the kernels $a(\cdot)$ and $b(\cdot)$ have to be specifically chosen.

Let $m$ be a positive integer, $0<\beta\leq 2$, $0<\alpha_m<\cdots<\alpha_2<\alpha_1\leq2$ and $a_j>0$ ($j=1,2,\cdots, m$).
Consider the abstract single-term time-fractional differential equation (TFDE) \cite{Bajlekova2001}
\begin{equation}\label{eq:STFDE}
\begin{aligned}
\begin{cases}
    D_t^\beta v(t)=Av(t),\quad\quad t>0; \\
    v(0) = \psi, \quad v'(0)=0\ (\text{if }\beta>1)
\end{cases}
\end{aligned}
\end{equation}
and the multi-term TFDE \cite{Lichenggang2012}
\begin{equation}\label{eq:MTFDE}
\begin{aligned}
\begin{cases}
    \sum\limits_{j=1}^m a_j D_t^{\alpha_j}u(t)=Au(t), \quad\quad t>0;\\
     u(0)=\psi, \quad u'(0)=0 \ (\text{if } \alpha_1>1),
\end{cases}
\end{aligned}
\end{equation}
where the Caputo fractional derivative $D_t^\alpha $ is defined in Definition \ref{def:Caputo}.

The well-posedness of \eqref{eq:MTFDE} can be derived from the well-posedness of \eqref{eq:STFDE} if $\beta\geq \alpha_1$ \cite{Lichenggang2012}.
Moreover, in some special cases, the solution of \eqref{eq:MTFDE} can be represented as
\begin{equation*}
u(t)=\int_0^\infty v(s)P_\varphi(ds,t), \quad t>0,
\end{equation*}
where $P_\varphi(ds,t)$ is the distribution of an inverse subordinator $\mathbf{E}_\varphi(t)$ with Laplace exponent $\varphi(\cdot)$, i.e.,
$$
\int_0^\infty e^{-\lambda t}P_\varphi(ds,t)dt=\frac{\varphi(\lambda)}{\lambda} e^{-s \varphi(\lambda)}ds;
$$
whenever $\mathbf{E}_\varphi(t)$ admits a Lebesgue density $p_\varphi(s,t)$ (see Remark \ref{rem:density-allt}), the formula reads
$u(t)=\int_0^\infty p_\varphi(s,t)v(s)ds$.
We cite some typical results in this direction (subordination from \eqref{eq:STFDE} to \eqref{eq:MTFDE}):

(1) If $m=1, a_1=1$, $\alpha_1<\beta$, then $\varphi(\lambda)=\lambda^{\alpha_1/\beta}$
\cite{Bajlekova2001,Limiao2010}.
In particular, if $\beta=2$, $\alpha_1=1$, it reduces to the classical Weierstrass formula \cite{Arendt2010book,Romanov1947}.

(2) If $m=2$, $\beta=\alpha_1=2$, $\alpha_2=1$, $a_1=1$, then \eqref{eq:MTFDE} reduces to the telegraph equation and
$\varphi(\lambda)=\sqrt{\lambda^2+a_2\lambda}$ \cite{Lichenggang2019,Orsingher2004}. See also the classical variant in \cite{Kac1974}.

(3) If $\beta=1$, $0<\alpha_m<\cdots<\alpha_2<\alpha_1\leq 1$, then $\varphi(\lambda)=\sum\limits_{j=1}^m a_j \lambda^{\alpha_j}$ \cite{Gorenflo2013,Hahn2012,Orsingher2017}.

(4) If $\beta\geq\alpha_1$, $0< \alpha_m<\cdots<\alpha_2<\alpha_1\leq 2$, $\alpha_1-\alpha_m\leq 1 $, then, based on the idea in \cite{Bajlekova2018}, one can take
$\varphi(\lambda)=\left(\sum\limits_{j=1}^m a_j \lambda^{\alpha_j}\right)^{1/\beta}$.

All of the above results concern subordination from a single-term equation to another single-term equation or to a multi-term equation.
For a class of multi-term fractional Jeffreys-type equations with special structure, subordination results with respect to certain evolution equations of integer order were established in \cite{Bazhlekova2024FCAA}, again in the direction from a simpler equation to a more complicated one.
However, to the best of our knowledge, there has been no subordination representation from the general multi-term equation \eqref{eq:MTFDE} to the single-term equation \eqref{eq:STFDE} so far.
Continuing our previous work on the subordination from the telegraph equation to the diffusion equation \cite{Lichenggang2026dual},
we establish the subordination formula from \eqref{eq:MTFDE} to \eqref{eq:STFDE} in several cases,
under the crucial condition $\beta\leq \alpha_m$, which is complementary to the condition $\beta\geq\alpha_1$ for the opposite direction.
Combining the two complementary directions further yields a subordination representation between two
multi-term TFDEs (Theorem \ref{thm:multi-to-multi}).

Furthermore, for $0<\beta<1$, the time-fractional differential equation \eqref{eq:STFDE} can be generalized to the nonlocal-in-time equation
\begin{equation*}
\begin{cases}
D_t(k\ast (u(\cdot)-u_0))(t)=Au(t), \qquad t>0; \\
u(0)=u_0,
\end{cases}
\end{equation*}
where the kernel $k(t)$ is a function of type $(\mathcal{PC})$ (see Definition \ref{def:PC-kernel}), the choice $k=g_{1-\beta}$ recovering \eqref{eq:STFDE}. Motivated by the subordination principle, we also construct representations of the form
\[
v(t)=\int_0^\infty u(s)P_\varphi (ds,t)
\]
between two such equations with kernels $k(t)$ and $l(t)$, including equations of tempered stable type, and between their telegraph-type counterparts (Subsection \ref{subsec:telegraph}), through which equations with time orders between $1$ and $2$ are covered as well.

The main contributions of this paper are as follows:
\begin{itemize}
\item[(i)] Several new criteria for Bernstein functions are provided, including a new Stieltjes function (Theorem \ref{thm:Stieltjes-quotient}), the Bernstein property of inverse functions of $\lambda=\big(\sum_{j=1}^m a_j\varphi^{\alpha_j}\big)^{1/\beta}$ (Theorem \ref{thm:inverse-multiterm} and Corollary \ref{cor:relax-beta}), the complete Bernstein property of $(\lambda^\beta+h)^\gamma$ (Proposition \ref{thm:CBF-tempered-power}), a necessary and sufficient condition for $[(\lambda+b)^\beta-b^\beta]^\gamma$ to be a Bernstein function (Proposition \ref{prop:tempered-iff}), a necessary and sufficient condition for $\big[(\lambda+b)^\beta+a^\alpha-b^\beta\big]^{1/\alpha}-a$ to be a Bernstein function (Proposition \ref{prop:shifted-power-iff}), which in particular settles the remaining case of the subordination between two tempered stable type equations (see Example \ref{ex:tempered-to-tempered} and Remark \ref{rem:open-tempered}), and a shared-outer-symbol criterion for the Bernstein property of $F^{-1}\circ G$ (Proposition \ref{prop:shared-outer});
\item[(ii)] the subordination representation from \eqref{eq:MTFDE} to \eqref{eq:STFDE} is established for all $0<\beta\leq\alpha_m$ (Theorem \ref{thm:MTFDE-to-STFDE}), and is combined with the known reverse direction to obtain a subordination representation between two multi-term TFDEs (Theorem \ref{thm:multi-to-multi});
\item[(iii)] subordination representations between general nonlocal-in-time equations with $(\mathcal{PC})$ kernels and between nonlocal-in-time telegraph equations are obtained, together with a negative result for the tempered stable case (Section \ref{sec:general}).
\end{itemize}

The rest of this paper is organized as follows. In Subsection \ref{subsec:prelim}, we fix the notation and
the solution concept, recall the well-posedness theory of the abstract time-fractional Cauchy problem
\eqref{eq:STFDE}, and collect the required facts on completely monotone functions, Bernstein functions,
Stieltjes functions, subordinators, and the subordination principle. Section \ref{sec:BF} is devoted to the new Bernstein functions. The subordination from multi-term to single-term TFDEs, as well as the resulting subordination between two multi-term TFDEs, is established in Section \ref{sec:multiterm}. Section \ref{sec:general} treats the general nonlocal-in-time equations and the nonlocal-in-time telegraph equations.

\subsection{Preliminaries and notation}\label{subsec:prelim}

Throughout this paper, $X$ is a Banach space, $A$ is a densely defined closed linear operator on $X$,
and $B(X)$ denotes the set of all bounded linear operators on $X$.
$L^1(\mathbb{R}_+; X)$ denotes the space of all Bochner-measurable functions
$u: \mathbb{R}_+ \rightarrow X$ such that $\|u(\cdot)\|$ is integrable.

Methods of the vector-valued Laplace transform \cite{Arendt2010book,Pruss1993book} are frequently used in this paper.
Suppose an $X$-valued function $f(t)$ is exponentially bounded, i.e., there exist constants $M>0$, $\omega\in \mathbb{R}$
such that $\|f(t)\|\leqslant Me^{\omega t}$. In particular, if $\omega=0$, then $f(t)$ is called bounded.
The Laplace transform of $f$ is denoted by
$$
\widehat{f}(\lambda)=\int_0^\infty e^{-\lambda t}f(t)\,dt, \qquad \operatorname{Re}\lambda>\omega.
$$
For $f\in L^1(\mathbb{R}_+; \mathbb{R})$ and $g\in L^1(\mathbb{R}_+; X)$, the Laplace convolution is defined by
$$
(f\ast g)(t)=\int_0^t f(t-s)g(s)ds=\int_0^t f(s)g(t-s)ds, \quad t>0.
$$
Clearly,
$$
\widehat{(f\ast g)} (\lambda)=\widehat{f}(\lambda)\widehat{g}(\lambda).
$$

\begin{defn}\label{def:Caputo}
The Caputo fractional derivative is defined by
\begin{eqnarray*}
(D_t^\alpha f)(t)=
\begin{cases}
\frac{1}{\Gamma(n-\alpha)}\int_0^t (t-s)^{n-\alpha-1} f^{(n)}(s)ds,  \quad\quad \text{for } n-1<\alpha<n;\\
f^{(n)}(t), \quad\quad\quad\quad \text{for } \alpha=n,
\end{cases}
\end{eqnarray*}
where $n=\lceil\alpha\rceil$ is the smallest integer greater than or equal to $\alpha>0$.
\end{defn}

The following formula for the Laplace transform of the Caputo fractional derivative will be used frequently in this paper:
\begin{equation}\label{eq:Caputo-Laplace}
\widehat{D_t^\alpha f}(\lambda)=\lambda^\alpha \widehat{f}(\lambda)-\sum_{j=0}^{\lceil\alpha\rceil-1}f^{(j)}(0)\lambda^{\alpha-1-j}.
\end{equation}

We next recall the well-posedness theory of the abstract Cauchy problem with a Caputo time-fractional
derivative, i.e., equation \eqref{eq:STFDE} with $0<\beta\leq2$, which is the standing single-term
equation of this paper; we refer to \cite{Bajlekova2001} for a systematic treatment. The
well-posedness of \eqref{eq:STFDE} is characterized by the existence of a $\beta$-times resolvent
family (solution operator) $\{S_\beta(t)\}_{t\geq0}\subset B(X)$, in which case the unique solution
is given by $v(t)=S_\beta(t)\psi$, and
\begin{equation}\label{eq:resolvent-family-Laplace}
\int_0^\infty e^{-\lambda t}S_\beta(t)dt=\lambda^{\beta-1}(\lambda^\beta-A)^{-1},
\qquad \lambda^\beta\in\rho(A).
\end{equation}
For $0<\beta\leq1$, every generator $A$ of a bounded
$C_0$-semigroup $T(t)$ also generates such a $\beta$-times resolvent family, given by the subordination formula
\begin{equation}\label{eq:S-beta-subordination}
S_\beta(t)=\int_0^\infty t^{-\beta}h_\beta(st^{-\beta})T(s)ds,
\end{equation}
where $h_\beta$ is the one-sided stable density with Laplace transform
$\int_0^\infty e^{-\lambda s}h_\beta(s)ds=e^{-\lambda^\beta}$. For $1<\beta<2$, the analogous statement
holds with bounded cosine families in place of $C_0$-semigroups \cite{Bajlekova2001}.

Next, we recall the theory of completely monotone functions and Bernstein functions \cite{Applebaum2009,Pruss1993book,Schilling2012}.

\begin{defn}\label{def:CMF-BF}
  A function $f(\cdot): (0,\infty)\rightarrow \mathbb{R}$ of class $C^\infty$ is said to be a
  {\it completely monotone function} if
  $$
    (-1)^n f^{(n)}(\lambda)\geq 0
  $$
  for all $n=0,1,2,\cdots$ and $\lambda >0$. Moreover, $\varphi(\cdot)$ is said to be a {\it Bernstein function}
  if $\varphi(\cdot)\geq0$ and $\varphi'(\cdot)$ is a completely monotone function.
\end{defn}

We write $\mathcal{CMF}$ and $\mathcal{BF}$ for the classes of completely monotone functions and Bernstein functions, respectively.
Recall that $e^{-a\lambda^\alpha}$ ($a>0$, $0\leq \alpha\leq 1$) and $\lambda^{-\alpha}$ ($\alpha>0$) are completely monotone, whereas
$\lambda^{\alpha}$  ($0<\alpha\leq 1$), $1-e^{-\lambda}$ and $\ln(1+\lambda)$ are Bernstein functions.
Moreover, $\mathcal{CMF}$ and $\mathcal{BF}$
appear in various fields of mathematics, such as integro-differential equations, probability, harmonic analysis, complex analysis, matrix analysis,
operator theory, etc. For example, the complete monotonicity of the Mittag-Leffler function $E_{\alpha}(-x)$ ($0<\alpha\leq1$), proved by
Pollard \cite{Pollard1948BAMS}, plays an important role in the positivity of time-fractional diffusion equations.

We recall the classical representation theorems for these two classes of functions.

\begin{prop} \label{prop:LK-representation}
  (1) Let $f$ be $\mathcal{CMF}$.
  Then it is the Laplace transform of a unique (nonnegative) measure $\nu$ on $[0,+\infty)$, i.e., for all $\lambda >0$,
  \[
    f(\lambda)=\mathcal{L} (\nu;\lambda)=\int_{[0,\infty)}e^{-\lambda t}\nu(dt).
  \]
  Conversely, whenever $ \mathcal{L}(\nu;\lambda)<\infty$ for every $\lambda >0$, $ \lambda \rightarrow\mathcal{L}(\nu;\lambda)$ is
  $\mathcal{CMF}$.

  (2) A function $\varphi(\cdot): (0,+\infty)\rightarrow (0,+\infty)$ is $\mathcal{BF}$ if and only if it
  admits the representation
  \[
    \varphi(\lambda)=a+b\lambda+\int_{(0,\infty)}(1-e^{-\lambda t})\sigma(dt),
  \]
  where $a,b\geq 0$ and $\sigma$ is a measure on $(0,\infty)$ satisfying
  $\int_{(0,\infty)}(1\wedge t)\sigma(dt)<\infty$. Furthermore, we have
  \[
    a=\lim_{\lambda\to0+} \varphi(\lambda), \qquad b=\lim_{\lambda\to\infty} \frac{\varphi(\lambda)}{\lambda}.
  \]
  We call $\sigma$ the L{\'e}vy measure.
\end{prop}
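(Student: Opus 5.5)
The statement to be proved is Proposition \ref{prop:LK-representation}, the classical Bernstein--Lévy--Khintchine representations. Part (1) is Bernstein's theorem on completely monotone functions, and part (2) will follow from part (1) applied to $\varphi'$.

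\textbf{Part (1).} The converse direction is routine. If $\mathcal{L}(\nu;\lambda)<\infty$ for all $\lambda>0$, then differentiation under the integral sign is justified on $[\lambda_0,\infty)$. Indeed, $t^n e^{-\lambda t}\le C_{n,\varepsilon}e^{-(\lambda-\varepsilon)t}$, and the right-hand side is $\nu$-integrable. This gives
\[
(-1)^n f^{(n)}(\lambda)=\int t^n e^{-\lambda t}\,\nu(dt)\ge 0 .
\]
For the direct implication I will follow the Hausdorff-moment/compactness route.
\begin{itemize}
\item Fix $\lambda_0>0$. Taylor's formula with integral remainder, combined with the signs of the derivatives, shows that $f$ extends analytically to the half-plane $\operatorname{Re}\lambda>0$ (Bernstein's lemma). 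It also gives
\[
f(\lambda)=\lim_{n\to\infty}\sum_{k}\frac{(-1)^k f^{(k)}(n)}{k!}\,(n-\lambda)^k .
\]
\item An equivalent form is the Post--Widder formula. Set
\[
\nu_n(dt)=\frac{(-1)^n}{n!}\Big(\frac{n}{t}\Big)^{n+1}f^{(n)}(n/t)\,dt ,
\]
together with a point mass at $0$ equal to $f(\infty)$. These are nonnegative measures, and $\int e^{-\lambda t}\nu_n(dt)\to f(\lambda)$.
\item The masses of $\nu_n$ against $e^{-\lambda_0 t}$ are uniformly bounded. Helly's selection theorem therefore yields a vaguely convergent subsequence. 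Tightness at infinity comes from the weights $e^{-\lambda t}$ with $\lambda>\lambda_0$, so the limit $\nu$ satisfies $f=\mathcal{L}\nu$.
\item Uniqueness follows because the functions $e^{-\lambda t}$ separate measures, by Stone--Weierstrass on the compactification $[0,\infty]$.
\end{itemize}
The main obstacle is the passage to the limit in the approximating measures, i.e.\ controlling the mass near $t=\infty$ and near $0$. I would handle this by testing against $e^{-\lambda t}$ with $\lambda$ slightly larger than the reference point.

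\textbf{Part (2), sufficiency.} Suppose $\varphi$ has the stated form. Since $1-e^{-\lambda t}\le(1\wedge t)\max(1,\lambda)$, the integral is finite. Differentiating under the integral gives
\[
\varphi'(\lambda)=b+\int t e^{-\lambda t}\,\sigma(dt),
\]
which is completely monotone by part (1), because $t\,\sigma(dt)$ has finite Laplace transform. Clearly $\varphi\ge 0$.

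\textbf{Part (2), necessity.} Let $\varphi\in\mathcal{BF}$. Applying part (1) to $\varphi'$ gives $\varphi'(\lambda)=\int_{[0,\infty)}e^{-\lambda t}\mu(dt)$. Set $b=\mu(\{0\})$ and $\sigma(dt)=t^{-1}\mu(dt)$ on $(0,\infty)$. Since $\varphi\ge0$ and $\varphi$ is increasing, the limit $a=\varphi(0+)\ge0$ exists. Integrating from $0$ to $\lambda$ and using Tonelli yields
\[
\varphi(\lambda)=a+b\lambda+\int(1-e^{-\lambda t})\,\sigma(dt).
\]
The integrability condition follows from finiteness at $\lambda=1$, because $1-e^{-t}\ge c\,(1\wedge t)$.

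\textbf{The limits.} The identity $a=\lim_{\lambda\to0+}\varphi(\lambda)$ holds by dominated convergence. For the slope, write
\[
\frac{\varphi(\lambda)}{\lambda}=\frac{a}{\lambda}+b+\int\frac{1-e^{-\lambda t}}{\lambda}\,\sigma(dt).
\]
The integrand is bounded by $(1\wedge t)\cdot C$ for $\lambda\ge1$ and tends to $0$ as $\lambda\to\infty$, so dominated convergence gives the limit $b$. Uniqueness of $(a,b,\sigma)$ follows from the uniqueness of $\mu$ in part (1).
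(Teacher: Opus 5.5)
The paper does not prove this proposition; it quotes it from \cite{Schilling2012}. Your converse in (1), your uniqueness argument, and all of (2) are correct and coincide with the standard proofs there. For (2) this means applying (1) to $\varphi'$, setting $\sigma(dt)=t^{-1}\mu(dt)$, integrating with Tonelli, and identifying $a$ and $b$ by dominated convergence.

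The gap is in the direct half of (1), which is Bernstein's theorem itself. You assert that $\int e^{-\lambda t}\nu_n(dt)\to f(\lambda)$ and that $\int e^{-\lambda_0 t}\nu_n(dt)$ is bounded in $n$, calling this an equivalent form of the Post--Widder formula. Neither statement is derived from complete monotonicity. Post--Widder inversion presupposes the Laplace representation you are trying to prove, so invoking it is circular.

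Both statements are also false in general. Take $f(\lambda)=\int_0^\infty e^{-\lambda u+u^{3/4}}\,du$, which is $\mathcal{CMF}$. It satisfies $f(s)\geq \exp(c\,s^{-3})$ for small $s>0$, and $(-1)^nf^{(n)}(s)\geq f(s)-e$. Hence the density $\frac{(-1)^n}{n!}(n/t)^{n+1}f^{(n)}(n/t)$ of $\nu_n$ grows at least like $t^{-n-1}\exp(c\,t^3/n^3)$. Therefore $\int e^{-\lambda t}\nu_n(dt)=\infty$ for every $n$ and every $\lambda>0$, and the Helly step has no bounds to work with.

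The missing idea is that Taylor's formula with integral remainder gives an \emph{exact} identity, with a compactly supported kernel in place of $e^{-\lambda t}$. Expand at $\mu>\lambda$ to order $n-1$. Let $\mu\to\infty$; all terms are nonnegative and $\mu^k f^{(k)}(\mu)\to0$ for $k\geq1$. Then substitute $s=n/t$ in the remainder. This yields
\[
f(\lambda)=f(\infty)+\int_{(0,\infty)}\Bigl(1-\frac{\lambda t}{n}\Bigr)_+^{n-1}\nu_n(dt),\qquad \lambda>0,\ n\geq1,
\]
with precisely your measures $\nu_n$ (without the atom). The rest of your outline then goes through with this kernel:
\begin{itemize}
\item[(i)] For $n\geq2\lambda R$ the identity gives $\nu_n((0,R])\leq e^{2\lambda R}f(\lambda)$, so a subsequence converges vaguely on $[0,\infty)$.
\item[(ii)] For $\lambda'<\lambda$ one has $(1-\lambda t/n)_+^{n-1}\leq e^{-(\lambda-\lambda')t(n-1)/n}(1-\lambda' t/n)_+^{n-1}$. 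This bounds the tails $\int_{(R,\infty)}$ uniformly by $e^{-(\lambda-\lambda')R/2}f(\lambda')$. This is your idea of comparing with a different parameter, now applied to the right kernel.
\item[(iii)] Since $(1-x/n)_+^{n-1}\to e^{-x}$ uniformly on $[0,\infty)$, you can pass to the limit. You obtain $f=\mathcal{L}(\nu)$, where $\nu$ is the vague limit plus $f(\infty)\delta_0$.
\end{itemize}
The analytic-extension step in your first bullet is not needed.
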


The definitions and characterization of $\mathcal{CMF}$ and $\mathcal{BF}$ are elementary; however, their verification is often non-trivial; see, for example, the proof of the BMV conjecture \cite{Stahl2013ACTA}.

The following closure properties of $\mathcal{CMF}$ and $\mathcal{BF}$ will be used freely throughout the paper \cite{Schilling2012}.
\begin{prop} \label{prop:closure}
  \begin{itemize}
    \item[(i)]
          If $f$ and $g$ are $\mathcal{CMF}$, then $f+g$ and $fg$ are $\mathcal{CMF}$.
    \item[(ii)]
          If $f$ and $g$ are $\mathcal{BF}$, then $f+g$ and $f\circ g$ are $\mathcal{BF}$.
    \item[(iii)]
          If $f$ is $\mathcal{CMF}$ and $g$ is $\mathcal{BF}$, then $f\circ g$ is $\mathcal{CMF}$.
    \item[(iv)]
          If $f$ is $\mathcal{BF}$, then $f(\lambda)/\lambda$ is $\mathcal{CMF}$.
  \end{itemize}
\end{prop}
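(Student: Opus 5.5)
The four closure properties in Proposition \ref{prop:closure} will be derived from the definitions (Definition \ref{def:CMF-BF}) together with the representation theorems in Proposition \ref{prop:LK-representation}. I plan to prove (i) first, since each later item reduces to it.

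\emph{Step (i).} The sum is immediate from linearity of derivatives. For the product I will use the Leibniz rule:
\[
(-1)^n(fg)^{(n)}=\sum_{k=0}^n\binom nk\,(-1)^kf^{(k)}\,(-1)^{n-k}g^{(n-k)}\ge0 .
\]
Alternatively, by Proposition \ref{prop:LK-representation}(1), $fg$ is the Laplace transform of the convolution $\nu_f\ast\nu_g$.

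\emph{Step (iii).} Let $f\in\mathcal{CMF}$ and $g\in\mathcal{BF}$, and set $h=f\circ g$. Then $-h'=(-f')(g)\cdot g'$. Here $-f'\in\mathcal{CMF}$ and $g'\in\mathcal{CMF}$. I will argue by induction on $n$ that $(-1)^n(F\circ g)^{(n)}\ge0$ for every $n$ and every $F\in\mathcal{CMF}$ simultaneously. Assuming the claim at order $n-1$, write
\[
(-1)^n h^{(n)}=(-1)^{n-1}\bigl[(-f')\circ g\cdot g'\bigr]^{(n-1)} .
\]
The factor $(-f')\circ g$ is $\mathcal{CMF}$ up to order $n-1$ by the induction hypothesis applied to $F=-f'$. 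The factor $g'$ is $\mathcal{CMF}$. The Leibniz argument of Step (i), which only uses signs of derivatives up to order $n-1$, then gives the sign. A cleaner route is also available: by Proposition \ref{prop:LK-representation}(1), $f\circ g=\int e^{-tg(\lambda)}\nu(dt)$. Then it suffices to show that $e^{-tg}$ is $\mathcal{CMF}$, and this follows from the same induction applied to the exponential, since $(e^{-tg})'=-tg'e^{-tg}$.

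\emph{Step (ii).} For the sum, nonnegativity is clear and $(f+g)'=f'+g'$. For the composition, $f\circ g\ge0$, and $(f\circ g)'=f'(g)\,g'$. By (iii), $f'\circ g$ is $\mathcal{CMF}$, and $g'$ is $\mathcal{CMF}$. By (i), their product is $\mathcal{CMF}$.

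\emph{Step (iv).} I will write $f(\lambda)/\lambda=\int_0^1 f'(s\lambda)\,ds+f(0+)/\lambda$. Each map $\lambda\mapsto f'(s\lambda)$ is $\mathcal{CMF}$, because $(-1)^n\frac{d^n}{d\lambda^n}f'(s\lambda)=s^n(-1)^nf^{(n+1)}(s\lambda)\ge0$. The term $a/\lambda$ with $a=f(0+)\ge0$ is $\mathcal{CMF}$. Differentiation under the integral is justified by local boundedness of $f^{(n+1)}$ on compact subsets of $(0,\infty)$. Two points need care: near $s=0$ the integrand is $f'(s\lambda)$ with $s\lambda$ small, but $\int_0^1 f'(s\lambda)\,ds=(f(\lambda)-f(0+))/\lambda$ is finite, and monotone convergence handles the derivatives. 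Alternatively, the Lévy–Khintchine form gives
\[
\frac{f(\lambda)}{\lambda}=\frac a\lambda+b+\int_0^\infty e^{-\lambda r}\sigma((r,\infty))\,dr,
\]
which is $\mathcal{CMF}$ by Proposition \ref{prop:LK-representation}(1).

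The main obstacle is Step (iii), namely organizing the induction so that the hypothesis covers all $\mathcal{CMF}$ functions at once. I would do this via the exponential reduction, proving $e^{-tg}\in\mathcal{CMF}$ by induction through $(e^{-tg})^{(n)}=-t\,(g'e^{-tg})^{(n-1)}$ and Leibniz.
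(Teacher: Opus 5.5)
Your proposal is correct. The paper gives no proof of its own and only cites \cite{Schilling2012}, and your arguments are essentially the standard ones found there: the Leibniz rule (or convolution of measures) for products, the chain rule with a Leibniz induction over all completely monotone outer functions for $f\circ g$, and the L\'evy--Khintchine formula with $(1-e^{-\lambda t})/\lambda=\int_0^t e^{-\lambda r}\,dr$ for $f(\lambda)/\lambda$. Present the L\'evy--Khintchine route as your proof of (iv), since in the first route the differentiation under $\int_0^1\cdots\,ds$ near $s=0$ is only sketched (local boundedness of $f^{(n+1)}$ on compacts does not cover $s\lambda\to0$); and in (ii)--(iii) you tacitly need $g\not\equiv0$, so that $g>0$ and $f\circ g$ is defined on $(0,\infty)$.
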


The following subclass of $\mathcal{BF}$ will be useful \cite{Schilling2012}.

\begin{defn}\label{def:CBF}
A $\mathcal{BF}$ $f$ is said to be a {\it complete Bernstein function} if
its L{\'e}vy measure $\sigma$ has a completely monotone density $m(t)$ with respect to
Lebesgue measure:
\[
f(\lambda)=a+b\lambda+\int_{(0,\infty)}(1-e^{-\lambda t})m(t)dt.
\]
\end{defn}

We write $\mathcal{CBF}$ for the class of complete Bernstein functions.
For example, $\lambda^{\alpha}$ ($0\leq \alpha\leq 1$) and $\ln(1+\lambda)$ are $\mathcal{CBF}$, while
 $1-e^{-\lambda}$ is not $\mathcal{CBF}$.

Complete Bernstein functions have useful holomorphic extensions; see \cite[Theorem 6.2]{Schilling2012}.

\begin{prop}\label{prop:CBF-characterization}
Suppose that $f$ is a nonnegative function on $(0,\infty)$. Then the following
conditions are equivalent:
\begin{itemize}
\item[(i)]
 $f$ is $\mathcal{CBF}$.
\item[(ii)]
 $f$ has a holomorphic extension to $H^+=\{z\in \mathbb{C}; \operatorname{Im}\,z>0 \}$ such that $\operatorname{Im}\,f(z)\geq 0$ for all $z\in H^+$ and
such that the limit $f(0+)=\lim_{(0,\infty)\ni \lambda\rightarrow0} f(\lambda)$ exists and is real.
\item[(iii)]
$f$ has a holomorphic extension to the cut complex plane $\mathbb{C}\setminus(-\infty,0]$, and
$\operatorname{Im}\,z\cdot \operatorname{Im}\,f(z)\geq 0$ for every $z$ there; moreover,
the limit $f(0+)=\lim_{(0,\infty)\ni \lambda\rightarrow0} f(\lambda)$ exists and is real.
\end{itemize}
\end{prop}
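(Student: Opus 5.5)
The statement to be proved is Proposition \ref{prop:CBF-characterization}, the equivalence of (i)--(iii). I will prove it through the Stieltjes-type integral representation of complete Bernstein functions, going around the cycle (i)$\Rightarrow$(iii)$\Rightarrow$(ii)$\Rightarrow$(i).

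For (i)$\Rightarrow$(iii), write the completely monotone Lévy density as $m(t)=\int_{(0,\infty)}e^{-ts}\mu(ds)$, using Bernstein's theorem (Proposition \ref{prop:LK-representation}(1)). Fubini together with $\int_0^\infty(1-e^{-\lambda t})e^{-st}dt=\frac{\lambda}{s(\lambda+s)}$ then gives
\[
f(\lambda)=a+b\lambda+\int_{(0,\infty)}\frac{\lambda}{\lambda+s}\,\frac{\mu(ds)}{s}.
\]
The integrability condition $\int(1\wedge t)m(t)dt<\infty$ translates into $\int\frac{\mu(ds)}{s(1+s)}<\infty$. The right-hand side is holomorphic on $\mathbb{C}\setminus(-\infty,0]$, by dominated convergence on compacts. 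Moreover, $\operatorname{Im}\frac{z}{z+s}=\frac{s\operatorname{Im}z}{|z+s|^2}$, so $\operatorname{Im}z\cdot\operatorname{Im}f(z)\ge0$. Finally, $f(0+)=a$ by monotone convergence. The implication (iii)$\Rightarrow$(ii) is immediate by restricting to $H^+$.

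The main step is (ii)$\Rightarrow$(i). A holomorphic $f$ on $H^+$ with $\operatorname{Im}f\ge0$ is a Pick (Nevanlinna) function, so it has the Herglotz representation
\[
f(z)=\alpha+\beta z+\int_{\mathbb{R}}\Big(\frac{1}{t-z}-\frac{t}{1+t^2}\Big)\rho(dt),\qquad \beta\ge0,\ \int\frac{\rho(dt)}{1+t^2}<\infty.
\]
I would take this as a known result; it can be proved by mapping $H^+$ to the disc and applying the Riesz--Herglotz theorem. Next I show $\operatorname{supp}\rho\subset(-\infty,0]$. The measure $\rho$ is the weak limit of $\frac1\pi\operatorname{Im}f(x+i\varepsilon)dx$ as $\varepsilon\to0$. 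On $(0,\infty)$, $f$ is real-valued and smooth; in particular $f$ extends continuously, indeed analytically by Schwarz reflection, across $(0,\infty)$ with $\operatorname{Im}f=0$ there. Hence $\rho((0,\infty))=0$.

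Substituting $t=-s$ and rearranging, using $\int\frac{\rho(ds)}{1+s^2}<\infty$ and $f(0+)$ finite, yields $f(\lambda)=a+b\lambda+\int_{[0,\infty)}\frac{\lambda}{\lambda+s}\frac{\sigma(ds)}{s}$. Here I expect the main obstacle: handling a possible atom at $s=0$ and the convergence at $0$. An atom $\rho(\{0\})>0$ would produce a term $-c/\lambda$, which blows up to $-\infty$ as $\lambda\to0+$, contradicting finiteness of $f(0+)$; combined with $f\ge0$, this rules it out. Similarly, letting $\lambda\downarrow0$ and using monotone convergence forces $\int\frac{\sigma(ds)}{s(1+s)}<\infty$ and identifies $a=f(0+)\ge0$.

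Reversing the Fubini computation from the first step then exhibits the Lévy density $m(t)=\int e^{-ts}\sigma(ds)$. This density is completely monotone by Proposition \ref{prop:LK-representation}(1), so $f$ is $\mathcal{CBF}$.
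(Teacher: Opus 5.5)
The paper does not prove this proposition itself; it quotes \cite[Theorem 6.2]{Schilling2012}. Your argument is the standard proof of this characterization and is correct: the Fubini computation gives $f(\lambda)=a+b\lambda+\int\frac{\lambda}{\lambda+s}\frac{\sigma(ds)}{s}$ for (i)$\Rightarrow$(iii), and for (ii)$\Rightarrow$(i) you use the Nevanlinna--Pick representation, Stieltjes inversion to remove mass on $(0,\infty)$, and the analysis at $0+$.

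Three small points should be made explicit. First, continuity of the extension up to $(0,\infty)$ with boundary values $f$ is part of what ``holomorphic extension'' means in (ii); it does not follow from the smoothness of $f$ on $(0,\infty)$, as your wording suggests. Second, when you exclude the atom at $0$, note that the remaining integral term decreases as $\lambda\downarrow0$ (its integrand $\frac{s}{1+s^2}-\frac{1}{\lambda+s}$ is decreasing there). It therefore stays bounded above and cannot offset $-c/\lambda$. Third, in (i)$\Rightarrow$(iii) the Bernstein measure $\mu$ has no atom at $0$, because $m$ is integrable at infinity.
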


Throughout this paper, we do not distinguish between $f$ and its analytic continuation.

\begin{defn}\label{def:Stieltjes}
A {\it Stieltjes function} is a function $f:(0,\infty)\to [0,\infty)$ which can be written in the form
\[
f(\lambda)=\frac{a}{\lambda}+b+\int_{(0,\infty)}\frac{1}{\lambda+t} \sigma(dt),
\]
where $a,b$ are nonnegative constants and $\sigma$ is a measure on $(0,\infty)$ such that
$\int_{(0,\infty)} (1+t)^{-1}\sigma(dt)<\infty$.
\end{defn}

Every Stieltjes function is completely monotone. We will need the following two facts from \cite{Schilling2012}.

\begin{lem}[{\cite[Corollary 7.4]{Schilling2012}}]\label{lem:Stieltjes-criterion}
Let $g$ be a positive function on $(0,\infty)$. Then $g$ is a Stieltjes function if, and only if, $g(0+)$ exists in $[0,\infty]$ and $g$ extends analytically
to $\mathbb{C}\setminus (-\infty,0]$ such that $\operatorname{Im}\,z\cdot \operatorname{Im}\,g(z)\leq 0$.
\end{lem}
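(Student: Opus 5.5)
The plan is to prove the two implications separately. The implication from the Stieltjes representation to analyticity is a direct computation. The converse goes through the Nevanlinna–Pick representation of functions mapping the upper half-plane into the closed upper half-plane.

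\emph{Necessity.} Suppose $g(\lambda)=a/\lambda+b+\int_{(0,\infty)}(\lambda+t)^{-1}\sigma(dt)$. The right-hand side makes sense for every $z\in\mathbb{C}\setminus(-\infty,0]$. Indeed, $|z+t|$ is bounded below by a constant multiple of $1+t$, locally uniformly in $z$, so the condition $\int(1+t)^{-1}\sigma(dt)<\infty$ gives local uniform convergence of the integral and hence holomorphy. We then use
\[
\operatorname{Im}\frac{1}{z+t}=-\frac{\operatorname{Im} z}{|z+t|^2}, \qquad \operatorname{Im}\frac{a}{z}=-\frac{a\operatorname{Im} z}{|z|^2}.
\]
These give $\operatorname{Im} z\cdot\operatorname{Im} g(z)\le 0$. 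Finally, $g$ is decreasing on $(0,\infty)$, so $g(0+)$ exists in $[0,\infty]$ by monotone convergence.

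\emph{Sufficiency.} Set $F=-g$ on $H^+$. By hypothesis $\operatorname{Im} F\ge 0$ there, so $F$ is a Pick function. The Nevanlinna representation then gives
\[
F(z)=\alpha+\beta z+\int_{\mathbb{R}}\Big(\frac{1}{t-z}-\frac{t}{1+t^2}\Big)\mu(dt),
\]
with $\alpha\in\mathbb{R}$, $\beta\ge0$ and $\int(1+t^2)^{-1}\mu(dt)<\infty$. The measure $\mu$ is recovered from boundary values of $\operatorname{Im} F$ via the Stieltjes inversion formula. On $(0,\infty)$, $g$ is analytic and real, so $\operatorname{Im} F(\lambda+i\varepsilon)\to0$ locally uniformly as $\varepsilon\downarrow0$. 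Hence $\mu$ is carried by $(-\infty,0]$.

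Substituting $t=-s$ yields
\[
g(z)=-\alpha-\beta z+\int_{[0,\infty)}\Big(\frac{1}{s+z}-\frac{s}{1+s^2}\Big)\tilde\mu(ds),
\]
where $\tilde\mu$ is the image of $\mu$ under $t\mapsto -s$. Letting $\lambda\to\infty$ along $(0,\infty)$ and using $g>0$ forces $\beta=0$. Indeed, the integral term is $o(\lambda)$ by dominated convergence, since $\int(1+s^2)^{-1}\tilde\mu(ds)<\infty$.

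\emph{Main obstacle.} The delicate step is upgrading the integrability $\int(1+s^2)^{-1}\tilde\mu(ds)<\infty$ to $\int(1+s)^{-1}\tilde\mu(ds)<\infty$, so that the compensator $s/(1+s^2)$ can be absorbed into the constant. I would obtain this from positivity of $g$ together with monotone convergence, as follows.

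1. For $\lambda>0$, the integrand $\frac{1}{s+\lambda}-\frac{s}{1+s^2}$ equals $-\frac{s\lambda-1}{(s+\lambda)(1+s^2)}$. This behaves like $-(1-\lambda)/s^{2}$ at infinity, so it is integrable, but it does not yet produce the needed bound.
2. Write $g(\lambda)-g(\mu')$ for two values $\lambda<\mu'$. This equals $\int\frac{\mu'-\lambda}{(s+\lambda)(s+\mu')}\tilde\mu(ds)$, so the compensator cancels.
3. Let $\mu'\to\infty$. Then $g(\mu')$ decreases to a limit $b\ge0$, since $g$ is positive and decreasing on $(0,\infty)$. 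By monotone convergence this gives
\[
g(\lambda)=b+\int_{[0,\infty)}\frac{\tilde\mu(ds)}{s+\lambda},
\]
with the integral finite because $g(\lambda)<\infty$. In particular $\int(1+s)^{-1}\tilde\mu(ds)<\infty$.
4. Separate the atom of $\tilde\mu$ at $0$, which gives the term $a/\lambda$ with $a=\tilde\mu(\{0\})$. The remainder $\sigma=\tilde\mu|_{(0,\infty)}$ gives exactly the Stieltjes form in Definition \ref{def:Stieltjes}.

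The hypothesis on $g(0+)$ is automatically satisfied once this representation is established. It plays no further role beyond guaranteeing that $g$ is finite and positive on $(0,\infty)$.
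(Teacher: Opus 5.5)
The paper does not prove this lemma; it only quotes \cite[Corollary 7.4]{Schilling2012}. Your argument is a correct, self-contained proof, and it takes a different route from the one the paper's own preliminaries suggest.

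\emph{The short route available in the paper.} The lemma follows in a few lines from two facts the paper already records. By Remark \ref{rem:CBF-reciprocal}, $g$ is Stieltjes if and only if $1/g$ is $\mathcal{CBF}$. Proposition \ref{prop:CBF-characterization}(iii) characterizes $\mathcal{CBF}$ analytically, and $\operatorname{Im}(1/g)=-\operatorname{Im}g/|g|^2$ turns one sign condition into the other. This is essentially how the result appears as a corollary in the cited source. That route needs two small checks which your proof avoids:
\begin{itemize}
\item $g$ has no zeros in $\mathbb{C}\setminus(-\infty,0]$. This follows from the open mapping theorem on each half-plane, unless $g$ is a positive constant.
\item $(1/g)(0+)=1/g(0+)$ is real, i.e.\ $g(0+)>0$. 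This is where the $g(0+)$ hypothesis enters.
\end{itemize}

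\emph{Your route.} You apply the Nevanlinna--Pick representation to $-g$ directly. Stieltjes inversion pushes the measure onto $(-\infty,0]$, and positivity kills the linear term. The difference $g(\lambda)-g(\mu')$ cancels the compensator $s/(1+s^2)$, and monotone convergence as $\mu'\to\infty$ then gives three things at once: the constant $b=\lim_{\lambda\to\infty}g(\lambda)\ge 0$, the integrability $\int(1+s)^{-1}\tilde\mu(ds)<\infty$, and the atom $a/\lambda$.

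\emph{What each approach buys.} Yours produces the explicit Stieltjes representation directly. It also shows that the $g(0+)$ hypothesis is redundant, since positivity already forces $g$ to be decreasing. The price is that you redo for Stieltjes functions the Pick-function analysis which the cited source carries out once for $\mathcal{CBF}$ and then transfers by reciprocity.

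\emph{Two minor points.}
\begin{itemize}
\item In Step 1 of your ``main obstacle'', $\frac{1-s\lambda}{(s+\lambda)(1+s^2)}$ behaves like $-\lambda/s^2$ at infinity, not $-(1-\lambda)/s^2$. This is harmless, because Step 1 is never used.
\item The Nevanlinna representation holds a priori only on $H^+$. Before letting $\lambda\to\infty$ along $(0,\infty)$, say explicitly that the integral converges locally uniformly on the cut plane (it is dominated by $C_K(1+s^2)^{-1}$ on compacta $K$). The identity then extends to $(0,\infty)$ by continuity or by the identity theorem.
\end{itemize}
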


\begin{rem}\label{rem:CBF-reciprocal}
A (nonzero) function $f$ is $\mathcal{CBF}$ if, and only if, $1/f$ is a (non-trivial) Stieltjes function \cite[Theorem 7.3]{Schilling2012}. In particular, the reciprocal of a complete Bernstein function is completely monotone. This fact will be used repeatedly in Section \ref{sec:BF}.
\end{rem}

We next recall the probabilistic preliminaries needed below.

\begin{defn}\label{def:subordinator}
A {\it subordinator} $\mathbf{D}(t)$
  is a one-dimensional L{\'e}vy process taking values in $[0,\infty)$ that is nondecreasing and starts at the origin almost surely.
\end{defn}

As a special L{\'e}vy process, a subordinator admits a simple L{\'e}vy--Khintchine representation.

\begin{thm}\label{thm:LK-subordinator}
  If $\mathbf{D}(t)$ is a subordinator, then the Laplace transform of its distribution is
  \[
    \mathbb{E}(e^{-\lambda \mathbf{D}(t)})=e^{-t\varphi(\lambda)},
  \]
  and the Laplace exponent $\varphi$ takes the form
  \begin{equation}\label{eq:LK-subordinator}
    \varphi(\lambda)=b\lambda+\int_{(0,\infty)}(1-e^{-\lambda s})\sigma(ds),
  \end{equation}
  where $b\geq 0$ and the measure $\sigma$ satisfies the same condition as in Proposition \ref{prop:LK-representation}.

  Conversely, any function $\varphi:(0,\infty)\to[0,\infty)$ of the form \eqref{eq:LK-subordinator} is the Laplace exponent of a subordinator.
\end{thm}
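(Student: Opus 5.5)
The proof splits into the forward direction, which is a Laplace-exponent computation from stationary independent increments, and the converse, which is an explicit construction via the Poisson random measure and the Lévy--Itô picture. Both directions rest on Proposition \ref{prop:LK-representation}(2).

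\textbf{Forward direction.} Let $\mathbf{D}(t)$ be a subordinator and fix $\lambda>0$. Set $g(t)=\mathbb{E}(e^{-\lambda\mathbf{D}(t)})\in(0,1]$.
\begin{itemize}
\item Stationary independent increments give $g(t+s)=g(t)g(s)$.
\item Right-continuity of paths and dominated convergence make $g$ right-continuous.
\item $g>0$: otherwise $g(t/n)^n=g(t)=0$ for all $n$, which contradicts $g(t/n)\to g(0)=1$.
\end{itemize}
Hence $g(t)=e^{-t\varphi(\lambda)}$ with $\varphi(\lambda)=-\log g(1)\ge 0$.

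Next we show $\varphi$ is a Bernstein function. For each $n$, $\mathbf{D}(1)$ is the sum of $n$ i.i.d.\ nonnegative variables distributed as $\mathbf{D}(1/n)$. Therefore
\[
\varphi(\lambda)=\lim_{n\to\infty} n\bigl(1-e^{-\varphi(\lambda)/n}\bigr)=\lim_{n\to\infty} n\,\mathbb{E}\bigl(1-e^{-\lambda\mathbf{D}(1/n)}\bigr).
\]
Each term $\lambda\mapsto n\int(1-e^{-\lambda x})\mu_{1/n}(dx)$ is a Bernstein function, because its derivative $n\int x e^{-\lambda x}\mu_{1/n}(dx)$ is completely monotone. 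Pointwise limits of Bernstein functions that stay finite are again Bernstein functions. This follows from Proposition \ref{prop:LK-representation}(2): write $1-e^{-\lambda x}$ against the finite measures $n\,\frac{x}{1+x}\mu_{1/n}(dx)$ on $[0,\infty]$ and extract a vaguely convergent subsequence. The mass at $\infty$ yields $a$ and the mass at $0$ yields the drift $b\lambda$.

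It remains to see $a=\varphi(0+)=0$. Since $\mathbf{D}(1)<\infty$ a.s., monotone convergence gives $\mathbb{E}(e^{-\lambda\mathbf{D}(1)})\to 1$ as $\lambda\to0$. This yields the form \eqref{eq:LK-subordinator}.

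\textbf{Converse.} Given $b\ge0$ and $\sigma$ with $\int(1\wedge s)\sigma(ds)<\infty$, construct a Poisson random measure $N$ on $(0,\infty)\times(0,\infty)$ with intensity $dt\,\sigma(ds)$. Set
\[
\mathbf{D}(t)=bt+\int_{(0,t]\times(0,\infty)} s\,N(du,ds).
\]
This is a.s.\ finite. Jumps larger than $1$ are finitely many on bounded intervals since $\sigma(1,\infty)<\infty$. Small jumps have finite expected sum $t\int_{(0,1]}s\,\sigma(ds)$.

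Independence of $N$ on disjoint time strips and translation invariance of the intensity give stationary independent increments. The path is nondecreasing and right-continuous, starting at $0$.

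Campbell's formula gives
\[
\mathbb{E}\, e^{-\lambda\mathbf{D}(t)}=\exp\Bigl(-t\bigl(b\lambda+\int(1-e^{-\lambda s})\sigma(ds)\bigr)\Bigr),
\]
so the Laplace exponent is $\varphi$.

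\textbf{Main obstacle.} The delicate step is the limit argument in the forward direction. One must control the measures $n\mu_{1/n}$ near $0$ and near $\infty$ so that the limiting representation holds with a genuine Lévy measure, a drift and no killing term.

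I would handle this by normalizing with $\frac{x}{1+x}$ as above. Tightness on the compactified half-line $[0,\infty]$ then comes for free from the boundedness of $n\,\mathbb{E}(1-e^{-\mathbf{D}(1/n)})\to\varphi(1)$. Uniqueness of the representation in Proposition \ref{prop:LK-representation}(2) makes the limit independent of the chosen subsequence.
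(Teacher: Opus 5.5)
Your proof is correct. The paper does not prove this statement itself. It is recorded in the preliminaries as the standard L\'evy--Khintchine formula for subordinators, with the proof left to the literature (Applebaum; Schilling--Song--Vondra\v{c}ek), and it is used only through the correspondence in Remark~\ref{rem:BF-Laplace-exponent}.

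What you give is essentially the textbook argument. Infinite divisibility yields $\varphi(\lambda)=\lim_{n\to\infty}n\bigl(1-\mathbb{E}e^{-\lambda\mathbf{D}(1/n)}\bigr)$. A compactness argument on $[0,\infty]$ then turns the masses of the limit measure at $0$, at $\infty$ and on $(0,\infty)$ into the drift, the killing term and the L\'evy measure. The converse is the Poisson-random-measure construction together with Campbell's exponential formula.

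Compared with the bare citation, your route makes explicit the one point the paper relies on later. The killing term vanishes precisely because $\mathbf{D}(1)<\infty$ a.s., so Laplace exponents are exactly the Bernstein functions with $\varphi(0+)=0$. The citation, in exchange, spares the reader the path-regularity and compactness details.

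When you write it out, state the two facts your sketch leaves implicit:
\begin{itemize}
\item The total masses $n\int\frac{x}{1+x}\,\mu_{1/n}(dx)$ are bounded by $n\bigl(1-e^{-\varphi(1)/n}\bigr)\le\varphi(1)$, because $\frac{x}{1+x}\le 1-e^{-x}$ (equivalently $e^{x}\ge 1+x$).
\item The passage to the limit uses that $k_\lambda(x)=(1-e^{-\lambda x})(1+x)/x$ extends continuously to $[0,\infty]$, with $k_\lambda(0)=\lambda$ and $k_\lambda(\infty)=1$.
\end{itemize}
A single weakly convergent subsequence then gives the representation for all $\lambda>0$ at once. So the forward direction needs neither Proposition~\ref{prop:LK-representation}(2) nor your uniqueness remark.
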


\begin{rem}\label{rem:BF-Laplace-exponent}
  There is a one-to-one correspondence between Bernstein functions $\varphi$ with $\varphi(0+)=0$ and
  subordinators \cite{Applebaum2009}: the Laplace exponent of every subordinator is such a Bernstein function,
  and conversely, every such Bernstein function is the Laplace exponent of a unique (in law) subordinator
  $\mathbf{D}_{\varphi}(t)$. Hereafter we use the term {\it Laplace exponent} in probabilistic contexts
  and {\it Bernstein function} in analytic contexts; both terms refer to the same object.
\end{rem}

Throughout this paper, we denote by $\mathbf{D}_{\varphi}(t)$ the subordinator with Laplace exponent $\varphi(\cdot)$. Its first passage time is defined by
\begin{equation}\label{eq:inverse-subordinator}
  \mathbf{E}_{\varphi}(t)=\inf\{z>0: \mathbf{D}_{\varphi}(z)>t\}.
\end{equation}
We call $\mathbf{E}_{\varphi}(t)$ the {\it inverse subordinator} with Laplace exponent $\varphi(\cdot)$. By the monotonicity and right-continuity of $\mathbf{D}_{\varphi}$, the exact relation
$\{\mathbf{E}_{\varphi}(t)\geq s\}=\{\mathbf{D}_{\varphi}(s-)\leq t\}$
holds for all $s,t>0$; in particular,
\[
\mathbb{P}(\mathbf{E}_{\varphi}(t)> s)=\mathbb{P}(\mathbf{D}_{\varphi}(s)\leq t)
\]
whenever $\mathbb{P}(\mathbf{D}_{\varphi}(s)=t)=0$. The latter holds for all $s,t>0$ when the L{\'e}vy measure of $\varphi$ is infinite, as in the cases covered by Remark \ref{rem:density-allt} below.
When it exists, we denote by $p_{\varphi}(s,t)$ the (Lebesgue) probability density of
$\mathbf{E}_{\varphi}(t)$.

\begin{rem}\label{rem:density-allt}
Suppose that $\varphi$ is a driftless Laplace exponent with $\varphi(\infty)=\infty$; equivalently, its
L{\'e}vy measure $\sigma$ is infinite: $\sigma(0,\infty)=\infty$. Then, for {\rm every} $t>0$,
$\mathbf{E}_{\varphi}(t)$ admits a Lebesgue density $p_{\varphi}(s,t)$, given by
\[
p_{\varphi}(s,t)=\int_0^t \sigma(t-\tau,\infty)\,\mathbb{P}(\mathbf{D}_{\varphi}(s)\in d\tau),
\]
see \cite[Theorem 3.1]{Meerschaert2008} (for a related almost-everywhere statement, see also
\cite[Lemma 2.1]{ChenZhenQing2020JFA}). All Laplace exponents arising as the final time change in the
subordination results of this paper are driftless with $\varphi(\infty)=\infty$ (this is verified at each
occurrence), so the corresponding inverse subordinators admit Lebesgue densities for every $t>0$, and all
subordination formulas below hold in the density form for every $t>0$. The only exception is the
intermediate exponent $\varphi_2$ in Theorem \ref{thm:multi-to-multi}, whose boundary case
$\beta=\gamma_1$ has a positive drift; Remark \ref{rem:p2-density} shows that the composed exponent
$F^{-1}\circ G$ is nevertheless driftless with an infinite L{\'e}vy measure, so the final kernel
$p_0(s,t)$ there is a genuine probability density for every $t>0$.
\end{rem}

We will also need the following standard Laplace-transform identity for inverse subordinators.

\begin{prop}\label{prop:inverse-subordinator-Laplace}
Let $\mathbf{E}_{\varphi}(t)$ be the inverse subordinator with Laplace exponent $\varphi$.
Then for any $\lambda>0$ and any bounded measurable function $F:[0,\infty)\to\mathbb{R}$,
\[
\int_0^\infty e^{-\lambda t}\, \mathbb{E}\bigl[F(\mathbf{E}_{\varphi}(t))\bigr]\,dt
= \frac{\varphi(\lambda)}{\lambda} \int_0^\infty F(s)\, e^{-s\varphi(\lambda)}\,ds.
\]
In particular, if $\mathbf{E}_{\varphi}(t)$ has a density $p_\varphi(\cdot,t)$,
then the Laplace transform of $p_{\varphi}(s,t)$ in $t$ is
\[
  \int_0^\infty e^{-\lambda t} p_{\varphi}(s,t)dt=\dfrac{{\varphi}(\lambda)}{\lambda}e^{-s{\varphi}(\lambda)}.
\]
\end{prop}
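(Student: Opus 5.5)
We prove the identity pathwise through the duality between $\mathbf{E}_\varphi$ and $\mathbf{D}_\varphi$, then apply Fubini. The plan is to first establish it for indicator functions $F=\mathbf{1}_{(s,\infty)}$ and extend to bounded measurable $F$ by a monotone class argument. For $F=\mathbf{1}_{(s,\infty)}$, the left-hand side is
\[
\int_0^\infty e^{-\lambda t}\,\mathbb{P}(\mathbf{E}_\varphi(t)>s)\,dt .
\]
Using $\{\mathbf{E}_\varphi(t)>s\}\subseteq\{\mathbf{D}_\varphi(s)\le t\}$, with equality up to the event $\{\mathbf{D}_\varphi(s)=t\}$, this becomes
\[
\int_0^\infty e^{-\lambda t}\,\mathbb{P}(\mathbf{D}_\varphi(s)\le t)\,dt .
\]
The exceptional set is harmless: for fixed $s$, $\mathbb{P}(\mathbf{D}_\varphi(s)=t)>0$ for at most countably many $t$, which is Lebesgue-null in the $dt$ integral. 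So no assumption on the Lévy measure is needed.

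By Fubini,
\[
\int_0^\infty e^{-\lambda t}\,\mathbb{P}(\mathbf{D}_\varphi(s)\le t)\,dt
=\mathbb{E}\!\int_{\mathbf{D}_\varphi(s)}^\infty e^{-\lambda t}\,dt
=\frac1\lambda\,\mathbb{E}\,e^{-\lambda \mathbf{D}_\varphi(s)}
=\frac1\lambda\,e^{-s\varphi(\lambda)},
\]
where the last step is Theorem \ref{thm:LK-subordinator}. The right-hand side of the claim for this $F$ is
\[
\frac{\varphi(\lambda)}{\lambda}\int_s^\infty e^{-r\varphi(\lambda)}\,dr=\frac1\lambda\,e^{-s\varphi(\lambda)},
\]
so the two sides agree. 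The degenerate case $\varphi\equiv0$ cannot occur: a Laplace exponent of a genuine subordinator that vanishes identically gives $\mathbf{D}\equiv0$ and $\mathbf{E}\equiv\infty$, and we exclude it. Constants $F\equiv1$ also work, since both sides equal $1/\lambda$ provided $\varphi(\lambda)>0$.

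Both sides are linear in $F$ and finite for bounded $F$. Bounded convergence gives stability under monotone limits, with $\lambda>0$ supplying integrability of $e^{-\lambda t}$. The indicators of $(s,\infty)$, $s\ge0$, together with the constants form a $\pi$-system generating the Borel $\sigma$-algebra of $[0,\infty)$. The functional monotone class theorem therefore extends the identity to all bounded measurable $F$.

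For the density statement, take $F=\mathbf{1}_B$ and write $\mathbb{P}(\mathbf{E}_\varphi(t)\in B)=\int_B p_\varphi(s,t)\,ds$. Tonelli gives
\[
\int_B\int_0^\infty e^{-\lambda t}p_\varphi(s,t)\,dt\,ds=\int_B \frac{\varphi(\lambda)}{\lambda}\,e^{-s\varphi(\lambda)}\,ds
\]
for all Borel sets $B$. Hence the two integrands agree for a.e.\ $s$. Continuity in $s$ of the right-hand side gives the stated formula, everywhere after choosing the natural version (joint measurability of $p_\varphi$ is implicit in the hypothesis).

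The only delicate point is the boundary event $\{\mathbf{D}_\varphi(s)=t\}$ in the duality, which the countability argument above disposes of.
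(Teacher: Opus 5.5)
Your proof is correct. There is no paper proof to compare it with: the paper quotes this proposition as a standard identity and does not prove it. The only ingredients it records are the duality $\{\mathbf{E}_{\varphi}(t)\geq s\}=\{\mathbf{D}_{\varphi}(s-)\leq t\}$ and the consequence $\mathbb{P}(\mathbf{E}_{\varphi}(t)>s)=\mathbb{P}(\mathbf{D}_{\varphi}(s)\leq t)$ whenever $\mathbb{P}(\mathbf{D}_{\varphi}(s)=t)=0$. It guarantees the latter for all $s,t$ only when the L\'evy measure is infinite.

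Your argument is the standard one built on exactly this duality:
\begin{itemize}
\item the tail identity for $F=\mathbf{1}_{(s,\infty)}$;
\item Fubini together with $\mathbb{E}\,e^{-\lambda\mathbf{D}_\varphi(s)}=e^{-s\varphi(\lambda)}$;
\item a monotone class extension to bounded measurable $F$.
\end{itemize}
Your observation that, for fixed $s$, the atoms of $\mathbf{D}_\varphi(s)$ are countable and hence Lebesgue-null in $t$ is what gives the identity for an arbitrary Laplace exponent. That is the generality in which the paper actually uses the proposition. In the proof of Theorem \ref{thm:multi-to-multi}, the measure form $\int_0^\infty e^{-\lambda t}P_2(d\tau,t)\,dt=\frac{\varphi_2(\lambda)}{\lambda}e^{-\tau\varphi_2(\lambda)}$ is invoked. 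By Remark \ref{rem:p2-density}, $\varphi_2$ can have a positive drift and a finite L\'evy measure there, so the paper's infinite-L\'evy-measure remark does not cover that case, but your argument does.

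Three small points are worth making explicit.
\begin{itemize}
\item The class in the monotone class argument should include measurability of $t\mapsto\mathbb{E}[F(\mathbf{E}_\varphi(t))]$. This is automatic, since $t\mapsto\mathbf{E}_\varphi(t)$ is nondecreasing and right-continuous, hence jointly measurable.
\item Finiteness of $\mathbf{E}_\varphi(t)$ follows from $\varphi\not\equiv0$, because then $\mathbf{D}_\varphi(z)\to\infty$ almost surely.
\item In the density clause, the exceptional $s$-null set depends on $\lambda$. You can remove this by taking rational $\lambda$ and using continuity in $\lambda$ of both sides.
\end{itemize}

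Your ``everywhere'' claim genuinely requires fixing a version of the density. For the version $p_\varphi(s,t)=\int_0^t\sigma(t-\tau,\infty)\,\mathbb{P}(\mathbf{D}_\varphi(s)\in d\tau)$ quoted in Remark \ref{rem:density-allt}, it holds for every $s$. This follows from $\int_0^\infty e^{-\lambda t}\sigma(t,\infty)\,dt=\varphi(\lambda)/\lambda$ in the driftless case together with the convolution theorem.
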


Finally, we recall the class of kernels of type $(\mathcal{PC})$ and the subordination principle.

\begin{defn}[{\cite{Kemppainen2016}}]\label{def:PC-kernel}
Let $k\in L^1_{loc}(\mathbb{R}_+)$ be nonnegative and nonincreasing. If there exists a function $k_1\in L^1_{loc}(\mathbb{R}_+)$ such that
$(k\ast k_1)(t)=1$ for all $t>0$, then $k$ is said to be a function of type $(\mathcal{PC})$. We also write $(k,k_1)\in (\mathcal{PC})$.
\end{defn}

The classical example is the pair $(g_{1-\alpha}, g_\alpha)$ ($\alpha\in (0,1)$), where
\[
g_{\alpha}(t)=\frac{t^{\alpha-1}}{\Gamma(\alpha)}, \qquad t>0.
\]
Note that $\widehat{g_\alpha}(\lambda)=\lambda^{-\alpha}$, and if $(k,k_1)\in(\mathcal{PC})$, then
$\widehat{k_1}(\lambda)=\frac{1}{\lambda\widehat{k}(\lambda)}$. Kernels of type $(\mathcal{PC})$ and the associated
nonlocal-in-time equations have been systematically studied; see \cite{Kemppainen2016,Kochubei2011,LuchkoYamamoto2016,Pruss1993book}.
We remark that in the fractional calculus community, a pair $(k,k_1)$ satisfying $(k\ast k_1)(t)=1$ is usually called a \emph{Sonine pair}, and the identity $k\ast k_1=1$ is referred to as the \emph{Sonine condition} (see, e.g., \cite{Kochubei2011,LuchkoYamamoto2016}); thus kernels of type $(\mathcal{PC})$ are precisely the Sonine kernels with a nonnegative nonincreasing first component.

\begin{rem}\label{rem:PC-boundary}
The Dirac delta $\delta_0$ is not of type $(\mathcal{PC})$ in the sense of Definition
\ref{def:PC-kernel}, since it is not locally integrable (although the pair $(\delta_0,1)$ still
satisfies the Sonine condition in the measure sense). In the examples of Section
\ref{sec:general} the boundary value $\alpha=1$ corresponds to this degenerate case: the equation
$D_t(k\ast(u(\cdot)-u_0))=Au$ with $k=\delta_0$ reduces to the classical equation $u'=Au$, and
its telegraph-type counterpart reduces to the classical telegraph equation; the corresponding
subordination identities remain valid, as one can check directly by the Laplace transform or by
approximation with $\alpha\uparrow1$. We include this boundary case in the examples without
further mention.
\end{rem}

We conclude with the solution concept used in the sequel. By a {\it strong solution} of the equation
$D_t(k\ast(u(\cdot)-u_0))(t)=Au(t)$, $u(0)=u_0$, with a kernel $k$ of type $(\mathcal{PC})$, we mean a
function $u\in C([0,\infty);X)$ with $u(0)=u_0$ such that $k\ast(u(\cdot)-u_0)$ is locally absolutely
continuous, $u(t)\in D(A)$ and the equation holds for almost every $t>0$. Strong solutions of the
telegraph-type equations in Subsection \ref{subsec:telegraph} are understood analogously, with
$k\ast k\ast(u(\cdot)-u_0)$ being continuously differentiable, vanishing together with its derivative at
$t=0$, and twice differentiable almost everywhere. For orders in $(0,1)$, the equations \eqref{eq:STFDE} and
\eqref{eq:MTFDE} fit into this framework with $k=g_{1-\alpha}$; for higher orders the Caputo sense of
Definition \ref{def:Caputo} is used. All Laplace transform computations in this paper are justified for
exponentially bounded strong solutions; e.g., $D_t(k\ast(u(\cdot)-u_0))=Au$ transforms into
$\lambda\widehat{k}(\lambda)\big(\widehat{u}(\lambda)-u_0/\lambda\big)=A\widehat{u}(\lambda)$. For existence,
uniqueness and regularity results, see \cite{Kemppainen2016,Kochubei2011,LuchkoYamamoto2016,Pruss1993book}.

\begin{prop}[{\cite[Theorem 4.1]{Pruss1993book}}]\label{prop:Pruss-subordination}
Let $a(\cdot), b(\cdot)\in L^1_{loc}(\mathbb{R}_+)$ and suppose the Volterra equations
\[
u(t) = x + \int_0^t a (t-s) Au(s) ds, \qquad v(t) = x + \int_0^t b (t-s) Av(s) ds
\]
admit bounded strong solutions $u$ and $v$, respectively. If there exists a Laplace exponent $\varphi(\lambda)$
such that $\widehat{a}(\varphi(\lambda))=\widehat{b}(\lambda)$ for all $\lambda>0$, then
\[
  v(t)=\int_0^\infty u(s)P_\varphi(ds,t),
\]
where $P_\varphi(ds,t)$ is the distribution of the inverse subordinator $\mathbf{E}_\varphi(t)$, satisfying
\[
\int_0^\infty e^{-\lambda t}P_\varphi(ds,t)dt=\frac{\varphi(\lambda)}{\lambda}e^{-s\varphi(\lambda)}ds.
\]
In particular, if $P_\varphi(s,t)$ has a Lebesgue density $p_\varphi(s,t)$, then
\[
  v(t)=\int_0^\infty p_\varphi(s,t)u(s)\,ds.
\]
\end{prop}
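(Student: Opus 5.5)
The plan is to verify the claimed representation on the Laplace transform side and then invert. First I set
\[
w(t):=\int_0^\infty u(s)P_\varphi(ds,t)=\mathbb{E}\big[u(\mathbf{E}_\varphi(t))\big], \qquad t\ge 0 .
\]
Since $u$ is bounded and continuous, $w$ is well defined as a Bochner integral, with $\|w(t)\|\le\sup_s\|u(s)\|$. I would also show that $w$ is continuous in $t$. The paths $t\mapsto \mathbf{E}_\varphi(t)$ are continuous almost surely, because they are inverses of strictly increasing processes. So dominated convergence gives continuity of $t\mapsto \mathbb{E}[u(\mathbf{E}_\varphi(t))]$. With continuity in hand, equality of Laplace transforms will give $w(t)=v(t)$ for every $t$, not just almost every $t$.

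\emph{Transform of $w$.} Apply Proposition \ref{prop:inverse-subordinator-Laplace} to the scalar functions $F=\langle x^*,u(\cdot)\rangle$ for $x^*\in X^*$. By Hahn--Banach this yields
\[
\widehat{w}(\lambda)=\frac{\varphi(\lambda)}{\lambda}\,\widehat{u}(\varphi(\lambda)), \qquad \lambda>0 .
\]
Since $\varphi(\lambda)>0$ for $\lambda>0$, $\widehat{u}(\varphi(\lambda))$ is defined because $u$ is bounded.

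\emph{Transformed equation for $u$.} Since $u$ is a strong solution, $u(s)\in D(A)$ and $a\ast Au=u-x$ is bounded. From this I want to deduce that for every $\mu>0$,
\[
\widehat{u}(\mu)\in D(A), \qquad \widehat{u}(\mu)=\frac{x}{\mu}+\widehat{a}(\mu)\,A\widehat{u}(\mu).
\]
The route is: take transforms of $u=x+a\ast Au$, use closedness of $A$ (Hille's theorem) to move $A$ inside the integral, and use the convolution theorem. Substituting $\mu=\varphi(\lambda)$ and multiplying by $\varphi(\lambda)/\lambda$, the hypothesis $\widehat{a}(\varphi(\lambda))=\widehat{b}(\lambda)$ gives
\[
\widehat{w}(\lambda)=\frac{x}{\lambda}+\widehat{b}(\lambda)\,A\widehat{w}(\lambda), \qquad \widehat{w}(\lambda)\in D(A).
\]
The same computation applied to $v$ gives the identical relation for $\widehat{v}(\lambda)$.

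\emph{Uniqueness.} Subtracting the two relations, $y=\widehat{v}(\lambda)-\widehat{w}(\lambda)$ satisfies $(I-\widehat{b}(\lambda)A)y=0$. To conclude $y=0$ I would use the standing well-posedness of the $v$-equation, i.e.\ the existence of a bounded resolvent family in the sense of \cite{Pruss1993book}. This forces $1/\widehat{b}(\lambda)\in\rho(A)$, so $I-\widehat{b}(\lambda)A$ is injective, first for large $\lambda$ and then for all $\lambda>0$ by uniqueness of analytic continuation. Hence $\widehat{v}=\widehat{w}$ on $(0,\infty)$. Uniqueness of the Laplace transform together with continuity of $v$ and $w$ gives $v=w$ everywhere. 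The density form then follows simply by writing $P_\varphi(ds,t)=p_\varphi(s,t)\,ds$.

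\emph{Expected obstacle.} The main difficulty is the functional-analytic justification in the second and third steps. These are the places where $A$ must be passed through Laplace integrals with only a closed operator and merely bounded strong solutions available, and where the injectivity of $I-\widehat{b}(\lambda)A$ must be obtained. For the latter I would first invoke the resolvent-family characterization of well-posedness. If only strong solutions are assumed, I would instead argue directly: a nonzero $y$ in the kernel would produce, by inverse transforming, a nontrivial bounded solution of the homogeneous $v$-equation. That contradicts uniqueness of strong solutions.
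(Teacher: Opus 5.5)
Your proposal is correct and is essentially the Laplace-transform verification the paper relies on. The paper gives no separate proof of this proposition (it defers to Pr\"uss's Theorem~4.1), but it runs the same computation in the proofs of Theorems~\ref{thm:MTFDE-to-STFDE} and~\ref{thm:PC-subordination}: write the transform of $\int_0^\infty u(s)P_\varphi(ds,t)$ as $\frac{\varphi(\lambda)}{\lambda}\widehat u(\varphi(\lambda))$, match through $\widehat a(\varphi(\lambda))=\widehat b(\lambda)$, then invoke uniqueness of the Laplace transform. There the injectivity of $I-\widehat b(\lambda)A$ that you isolate is assumed implicitly by writing resolvents such as $(\lambda^\beta-A)^{-1}$, so you are right that a well-posedness (or uniqueness) hypothesis on the $v$-equation must be added; it is needed only for large $\lambda$.

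One correction: $\mathbf{E}_\varphi$ need not have continuous paths. For a driftless exponent with finite L\'evy measure, $\mathbf{D}_\varphi$ is compound Poisson with flat stretches, so its inverse jumps. Its paths are always right-continuous, however, and right-continuity of $w$ together with continuity of $v$ is all you need to upgrade a.e.\ equality to every $t$, exactly as the paper does.
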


Note that Proposition \ref{prop:Pruss-subordination} is a variant of \cite[Theorem 4.1]{Pruss1993book}:
the latter is formulated in the language of resolvent families for Volterra equations, whereas the
present formulation, phrased directly in terms of the distribution of the inverse subordinator
$\mathbf{E}_\varphi(t)$, follows that of \cite{Lichenggang2026dual}.

\begin{rem}\label{rem:verification-form}
Proposition \ref{prop:Pruss-subordination} is stated in the verification form: both solutions are assumed
to exist, and the subordination identity is proved. Under standard generation assumptions on $A$, the
formula $v(t)=\int_0^\infty u(s)P_\varphi(ds,t)$ can also be used to construct the solution of the
$b$-equation from that of the $a$-equation; this is the point of view of \cite[Theorem
4.1]{Pruss1993book}, where resolvent families are built by subordination. We adopt the verification form
throughout in order to focus on the main difficulty---the Bernstein property of the implicitly defined
Laplace exponents; existence results are recalled in Remarks \ref{rem:STFDE-existence} and
\ref{rem:multi-multi-existence}.
\end{rem}

Although the equations in Sections \ref{sec:multiterm}--\ref{sec:general} appear in the form of time-differential equations,
the underlying idea of all subordination constructions in this paper stems from Proposition \ref{prop:Pruss-subordination}.
When applying this principle, the main difficulty lies in determining whether the (usually implicitly defined) candidate
function $\varphi(\lambda)$ is a Bernstein function; this is the topic of the next section.

\section{New Bernstein functions}\label{sec:BF}

In this section we provide several new Bernstein functions, which will serve as Laplace exponents in the subordination
representations of Sections \ref{sec:multiterm}--\ref{sec:general}. The following lemma from \cite{Lichenggang2026dual}
is our basic tool.

\begin{lem}[{\cite{Lichenggang2026dual}}]\label{lem:composition-CMF}
    Suppose that functions $f(\cdot),\varphi(\cdot): (0,+\infty)\rightarrow (0,+\infty)$ are infinitely differentiable,
    $f$ is $\mathcal{CMF}$, and $\varphi'(\lambda)=f(\varphi(\lambda))$. Then $\varphi$ is $\mathcal{BF}$.
\end{lem}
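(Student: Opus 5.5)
Since $\varphi'=f\circ\varphi$ with $f>0$, the derivative $\varphi'$ is positive. Also $\varphi>0$ by hypothesis. So it suffices to show that $g:=\varphi'=f\circ\varphi$ is completely monotone, i.e.\ $(-1)^n g^{(n)}\geq 0$ for all $n\geq 0$. My plan is to do this by induction on $n$, using the chain rule together with the ODE $\varphi'=f(\varphi)$. The point of the ODE is that every higher derivative of $\varphi$ can be written as a polynomial expression in the derivatives of $f$, all evaluated at $\varphi(\lambda)$.

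The cleanest way to organize the induction is to prove the following stronger claim: for each $n\geq1$,
\[
\varphi^{(n)}(\lambda)=P_n\bigl(f(\varphi(\lambda)),f'(\varphi(\lambda)),\dots,f^{(n-1)}(\varphi(\lambda))\bigr),
\]
where $P_n$ is a polynomial with nonnegative coefficients in the quantities $f$, $-f'$, $f''$, $\dots$, $(-1)^{k}f^{(k)}$. Moreover, $P_n$ is homogeneous of "sign degree" $n-1$: each monomial $\prod_i (-1)^{k_i}f^{(k_i)}$ appearing in $P_n$ satisfies $\sum k_i=n-1$.

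Granting the claim, the conclusion is immediate. We can write
\[
(-1)^{n-1}\varphi^{(n)}=\sum c\,\prod_i\bigl((-1)^{k_i}f^{(k_i)}\bigr)(\varphi),
\]
with $c\geq0$. Each factor $(-1)^{k_i}f^{(k_i)}(\varphi)$ is nonnegative because $f$ is $\mathcal{CMF}$ and $\varphi(\lambda)\in(0,\infty)$. Hence $(-1)^{n-1}\varphi^{(n)}\geq0$, which is exactly the statement that $\varphi'$ is $\mathcal{CMF}$.

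To prove the claim, differentiate a monomial $\prod_i f^{(k_i)}(\varphi)$ once. By the product rule and the chain rule, each factor $f^{(k)}(\varphi)$ is replaced by $f^{(k+1)}(\varphi)\varphi'=f^{(k+1)}(\varphi)f(\varphi)$.
\begin{itemize}
\item The index sum increases by exactly one, so the new monomial has sign degree $n$.
\item The sign bookkeeping works out: $(-1)^{k}f^{(k)}\mapsto (-1)^{k}f^{(k+1)}f=-\bigl[(-1)^{k+1}f^{(k+1)}\bigr]f$.
\end{itemize}
The overall factor $-1$ is absorbed by passing from $(-1)^{n-1}$ to $(-1)^n$. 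Consequently the coefficients remain nonnegative, and the induction closes. The base case is $\varphi'=f(\varphi)$, which has sign degree $0$.

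The only point needing care is this sign bookkeeping. Equivalently, one can argue via the closure properties: if $h$ is $\mathcal{CMF}$ and $\varphi'$ is known up to order $n$, then $-(h\circ\varphi)'=(-h')(\varphi)\,f(\varphi)$. However, applying Proposition \ref{prop:closure}(iii) directly would be circular, since it presupposes that $\varphi$ is $\mathcal{BF}$. The explicit polynomial induction above avoids that circularity, and I expect it to be the main (though modest) obstacle. Smoothness of $\varphi$ is given by hypothesis, so no regularity issues arise.
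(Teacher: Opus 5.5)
Your proof is correct. The paper does not reprove this lemma; it imports it from \cite{Lichenggang2026dual}. Remark \ref{rem:product-CMF} only records that the cited argument shows directly that $f\circ\varphi=\varphi'$ is $\mathcal{CMF}$, and your induction on the order of differentiation supplies exactly such a direct proof.

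\textbf{A sign slip in the stated claim.} As literally written, your claim says that $\varphi^{(n)}$ itself is a nonnegative combination of monomials in the signed quantities $(-1)^k f^{(k)}(\varphi)$. This is false already for $n=2$, since $\varphi''=f'(\varphi)f(\varphi)=-\bigl[(-f')(\varphi)\bigr]f(\varphi)$.

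What your displayed formula and your inductive step actually establish is the correct version: $(-1)^{n-1}\varphi^{(n)}$ is such a nonnegative combination. Equivalently, $\varphi^{(n)}$ is a nonnegative combination of unsigned monomials $\prod_i f^{(k_i)}(\varphi)$ with $\sum_i k_i=n-1$. In the signed formulation the homogeneity bookkeeping is not even needed. The identity $\frac{d}{d\lambda}\bigl[(-1)^k f^{(k)}(\varphi)\bigr]=-\bigl[(-1)^{k+1}f^{(k+1)}(\varphi)\bigr]f(\varphi)$ contributes exactly one factor $-1$ per differentiation. You should restate the claim accordingly.

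\textbf{The closure-property route is not circular.} It works if run as an induction on $n$ simultaneously for all $\mathcal{CMF}$ $h$. Use $-(h\circ\varphi)'=\bigl((-h')\circ\varphi\bigr)\,(f\circ\varphi)$ and the Leibniz rule. Sign alternation of $(h\circ\varphi)^{(k)}$ for $k\leq n$ and every $\mathcal{CMF}$ $h$ then gives it for $k\leq n+1$; the hypothesis is applied to $-h'$ and to $f$. This version avoids the polynomial bookkeeping altogether. It also carries over verbatim to the variant $\varphi'=l\cdot(f\circ\varphi)$ in Remark \ref{rem:product-CMF}, where your polynomials would additionally have to track the derivatives of $l$.
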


\begin{rem}\label{rem:product-CMF}
The same conclusion holds if $\varphi'(\lambda)=l(\lambda)f(\varphi(\lambda))$, where both $l(\cdot)$ and $f(\cdot)$ are $\mathcal{CMF}$.
Indeed, $f\circ\varphi$ is $\mathcal{CMF}$ by the proof of \cite[Lemma 2.4]{Lichenggang2026dual}, hence $\varphi'=l\cdot(f\circ\varphi)$
is a product of two $\mathcal{CMF}$s, which is again $\mathcal{CMF}$ by Proposition \ref{prop:closure} (i).
\end{rem}

The next lemma shows that the Bernstein property is preserved under inversion whenever the derivative of the given
function is a Bernstein function.

\begin{lem}\label{lem:inverse-BF}
Let $F(\cdot): (0,+\infty)\rightarrow (0,+\infty)$ be a bijection of class $C^\infty$ such that $F'$ is $\mathcal{BF}$.
Then the inverse function $\varphi=\varphi(\lambda)=F^{-1}(\lambda)$ is $\mathcal{BF}$.
\end{lem}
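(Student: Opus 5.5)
The plan is to apply Lemma \ref{lem:composition-CMF} to $\varphi=F^{-1}$. By the inverse function rule,
\[
\varphi'(\lambda)=\frac{1}{F'(\varphi(\lambda))}=f(\varphi(\lambda)),\qquad f:=\frac{1}{F'} .
\]
It therefore suffices to check three things:
\begin{itemize}
\item $\varphi$ is a $C^\infty$ map $(0,\infty)\to(0,\infty)$;
\item $f$ is $C^\infty$ and positive on $(0,\infty)$;
\item $f$ is completely monotone.
\end{itemize}

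\textbf{Regularity and positivity.} Since $F'$ is a Bernstein function, $F'\geq0$. Suppose $F'$ vanished at some $\lambda_0>0$. A Bernstein function is nondecreasing and nonnegative, so $F'$ would vanish identically on $(0,\lambda_0]$. Then $F$ would be constant there, contradicting injectivity. Hence $F'>0$ on $(0,\infty)$.

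The inverse function theorem then gives $\varphi\in C^\infty$, and $\varphi$ maps $(0,\infty)$ onto $(0,\infty)$ because $F$ is a bijection. Also $f=1/F'$ is $C^\infty$ and strictly positive.

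\textbf{Complete monotonicity of $f$.} Write $f=g\circ F'$ with $g(x)=1/x$. The function $g$ is completely monotone on $(0,\infty)$, being the case $\alpha=1$ of $\lambda^{-\alpha}$. Since $F'$ is a Bernstein function with values in $(0,\infty)$, Proposition \ref{prop:closure}(iii) shows that $g\circ F'=1/F'$ is completely monotone.

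One should check that composition is legitimate even if $F'(0+)=0$. It is, because $F'$ takes values only in $(0,\infty)$, where $g$ is defined and smooth.

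\textbf{Conclusion.} All hypotheses of Lemma \ref{lem:composition-CMF} hold, with $\varphi'=f\circ\varphi$, so $\varphi=F^{-1}$ is a Bernstein function.

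The only slightly delicate step is the strict positivity of $F'$, which is needed both for $\varphi$ to be smooth and for $1/F'$ to be defined. It follows from the monotonicity of Bernstein functions combined with injectivity, as above. The rest is a direct application of the closure properties and the basic lemma.
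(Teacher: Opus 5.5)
Your proof is correct and is the paper's argument: $1/F'$ is completely monotone by Proposition \ref{prop:closure} (iii), $\varphi'=(1/F')\circ\varphi$, and Lemma \ref{lem:composition-CMF} then shows $\varphi$ is a Bernstein function. Your injectivity argument for $F'>0$ usefully makes explicit a step the paper only asserts, since the Bernstein property by itself does not rule out $F'\equiv 0$.
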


\begin{proof}
Since $F'$ is $\mathcal{BF}$, we have $F'>0$ and
\[
\frac1{F'}=(x\mapsto x^{-1})\circ F'\in\mathcal{CMF}
\]
by Proposition \ref{prop:closure} (iii). Moreover, $\varphi=F^{-1}$ satisfies
\[
\varphi'(\lambda)=\frac1{F'(\varphi(\lambda))},
\]
so Lemma \ref{lem:composition-CMF} applies with $f=1/F'$ and proves that $\varphi$ is $\mathcal{BF}$.
\end{proof}

\begin{lem}\label{lem:inverse-polynomial}
Let $m$ be a positive integer and $1\leq\alpha_m<\cdots<\alpha_2<\alpha_1\leq2$, $a_j>0$ ($j=1,2,\cdots, m$).
Then the inverse function $\varphi=\varphi(\lambda)$ ($\lambda>0$) of
\begin{equation}\label{eq:inverse-polynomial}
\lambda=F(\varphi)=\sum\limits_{j=1}^m a_j \varphi^{\alpha_j}, \quad (\varphi>0)
\end{equation}
is $\mathcal{BF}$ with $\varphi(0+)=0$.
\end{lem}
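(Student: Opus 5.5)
The plan is to apply Lemma \ref{lem:inverse-BF} directly to $F(\varphi)=\sum_{j=1}^m a_j\varphi^{\alpha_j}$ on $(0,\infty)$. First we check that $F$ is a $C^\infty$ bijection of $(0,+\infty)$ onto itself. Each term $a_j\varphi^{\alpha_j}$ with $\alpha_j\geq 1>0$ is smooth on $(0,\infty)$ and strictly increasing. Moreover $F(0+)=0$ and $F(\infty)=\infty$, so by continuity and strict monotonicity $F$ is a bijection.

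Next we verify that $F'$ is $\mathcal{BF}$. We compute
\[
F'(\varphi)=\sum_{j=1}^m a_j\alpha_j\varphi^{\alpha_j-1},
\]
where $0\leq\alpha_j-1\leq 1$ by hypothesis. Each summand is a nonnegative multiple of $\varphi^{\gamma}$ with $\gamma\in[0,1]$. For $\gamma\in(0,1]$ this is a Bernstein function, as recalled after Definition \ref{def:CMF-BF}. For $\gamma=0$ (i.e.\ $\alpha_m=1$) it is a positive constant, which is trivially $\mathcal{BF}$ since it is nonnegative with zero, hence completely monotone, derivative. By Proposition \ref{prop:closure}(ii), the sum $F'$ is $\mathcal{BF}$, and it is strictly positive on $(0,\infty)$.

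Lemma \ref{lem:inverse-BF} then gives that $\varphi=F^{-1}$ is $\mathcal{BF}$. Finally, $\varphi(0+)=0$ because $F$ is an increasing bijection with $F(0+)=0$: for any $\varepsilon>0$ we have $\varphi(\lambda)<\varepsilon$ whenever $\lambda<F(\varepsilon)$.

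There is no real obstacle here. The only point needing care is the endpoint case $\alpha_m=1$, where one summand of $F'$ is constant. This is exactly where the hypothesis $\alpha_m\geq 1$ is used: it guarantees that $F'$ has no negative powers, which would be completely monotone rather than Bernstein. The condition $\alpha_1\leq 2$ ensures that the exponents $\alpha_j-1$ do not exceed $1$.
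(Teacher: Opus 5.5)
Your proposal is correct and follows the paper's proof essentially verbatim. Like the paper, you check that $F$ is a smooth increasing bijection with $F(0+)=0$, observe that $F'(\varphi)=\sum_{j=1}^m a_j\alpha_j\varphi^{\alpha_j-1}$ is $\mathcal{BF}$ because $0\leq\alpha_j-1\leq 1$ and $\mathcal{BF}$ is closed under addition, and conclude with Lemma \ref{lem:inverse-BF}; your separate handling of the constant summand when $\alpha_m=1$ is a detail the paper leaves implicit.
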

\begin{proof}
It is evident that $F:(0,+\infty)\to(0,+\infty)$ is a $C^\infty$ bijection with $F(0+)=0$, so the inverse function
satisfies $\varphi(0+)=0$. Moreover,
$$
F'(\varphi)=\sum\limits_{j=1}^m a_j\alpha_j \varphi^{\alpha_j-1}>0,
$$
and $F'$ is $\mathcal{BF}$ since $0\leq \alpha_j-1\leq 1$ ($j=1,2,\cdots,m$) and $\mathcal{BF}$ is closed under
addition (Proposition \ref{prop:closure} (ii)). The conclusion follows from Lemma \ref{lem:inverse-BF}.
\end{proof}

\begin{rem}\label{rem:inverse-polynomial-alt}
Lemma \ref{lem:inverse-polynomial} can also be proved via Lemma \ref{lem:composition-CMF}. Indeed,
$$
\varphi'(\lambda)=\frac{1}{\sum\limits_{j=1}^m a_j\alpha_j \varphi^{\alpha_j-1}}:=f(\varphi),
$$
where the denominator is $\mathcal{CBF}$ (each power $\varphi^{\alpha_j-1}$ with $0\leq\alpha_j-1\leq1$ is $\mathcal{CBF}$,
and $\mathcal{CBF}$ is closed under addition), so $f=1/F'$ is a Stieltjes function by Remark \ref{rem:CBF-reciprocal}
and in particular $\mathcal{CMF}$.
\end{rem}

The following theorem provides a new Stieltjes function, which is the key to our main results.

\begin{thm}\label{thm:Stieltjes-quotient}
Let $m\geq1$ and $a_j>0$ ($j=1,2,\cdots,m$). Moreover, let $0<\alpha_{m}<\alpha_{m-1}<\cdots<\alpha_1$ with
$0\leq\alpha_1-\alpha_m\leq 1$. Then
\[
g(\lambda)=\frac{\sum\limits_{j=1}^{m} a_j\lambda^{\alpha_j} }{\sum\limits_{j=1}^{m} a_j\alpha_j\lambda^{\alpha_j} }
\]
is a Stieltjes function and hence is completely monotone on $(0,\infty)$.
\end{thm}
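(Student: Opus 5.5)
The plan is to verify the analytic criterion of Lemma \ref{lem:Stieltjes-criterion} directly. Put
\[
P(z)=\sum_{j=1}^m a_j z^{\alpha_j},\qquad Q(z)=\sum_{j=1}^m a_j\alpha_j z^{\alpha_j},
\]
using principal branches on $\mathbb{C}\setminus(-\infty,0]$. Both are holomorphic there, and $g=P/Q$ on $(0,\infty)$, where it is clearly positive. The case $m=1$ is trivial, since then $g\equiv 1/\alpha_1$ is a constant and hence a Stieltjes function. So I assume $m\geq 2$.

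\textbf{Step 1: $Q$ has no zeros.} First I check that $Q$ does not vanish on the cut plane, so that $g$ extends holomorphically there. Write $z=re^{i\theta}$ with $r>0$ and $0<|\theta|<\pi$. Each term $a_j\alpha_j z^{\alpha_j}$ is a positive multiple of $e^{i\alpha_j\theta}$. The arguments $\alpha_j\theta$ all lie in an interval of length $(\alpha_1-\alpha_m)|\theta|<\pi$. Hence all terms lie in an open sector of opening angle $<\pi$, and a positive combination of such vectors cannot vanish. On $(0,\infty)$ we have $Q>0$ trivially.

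\textbf{Step 2: the sign of $\operatorname{Im} g$.} This is the key computation, and the only point where the hypothesis $\alpha_1-\alpha_m\le 1$ enters seriously. Since $\operatorname{Im}(P/Q)=\operatorname{Im}(P\overline{Q})/|Q|^2$, it suffices to determine the sign of $\operatorname{Im}(P\overline Q)$. Expanding,
\[
P(z)\overline{Q(z)}=\sum_{j,k} a_ja_k\alpha_k\, r^{\alpha_j+\alpha_k}e^{i\theta(\alpha_j-\alpha_k)} .
\]
Pairing the $(j,k)$ and $(k,j)$ terms of the imaginary part, the diagonal terms drop out and we get
\[
\operatorname{Im}\bigl(P\overline Q\bigr)=\sum_{j<k} a_ja_k r^{\alpha_j+\alpha_k}\,(\alpha_k-\alpha_j)\,\sin\bigl(\theta(\alpha_j-\alpha_k)\bigr).
\]
For $0<\theta<\pi$ we have $|\theta(\alpha_j-\alpha_k)|\le \theta(\alpha_1-\alpha_m)<\pi$. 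Therefore $\sin(\theta(\alpha_j-\alpha_k))$ has the same sign as $\alpha_j-\alpha_k$, and each summand is $\le 0$. So $\operatorname{Im} g(z)\le 0$ on the upper half-plane. On the lower half-plane the inequality reverses, because $g(\bar z)=\overline{g(z)}$. Thus $\operatorname{Im} z\cdot\operatorname{Im} g(z)\le0$ on the whole cut plane.

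\textbf{Step 3: the limit at $0$.} As $\lambda\to0+$, the lowest power $\lambda^{\alpha_m}$ dominates both numerator and denominator, so $g(0+)=1/\alpha_m$ exists in $[0,\infty]$.

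\textbf{Conclusion.} Lemma \ref{lem:Stieltjes-criterion} now shows that $g$ is a Stieltjes function. Since every Stieltjes function is completely monotone, $g$ is completely monotone on $(0,\infty)$.
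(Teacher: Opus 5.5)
Your proof is correct and follows essentially the same route as the paper: you verify the criterion of Lemma \ref{lem:Stieltjes-criterion} through the same pairing computation of $\operatorname{Im}(P\overline{Q})$, together with $g(0+)=1/\alpha_m$. The only cosmetic difference is how you show $Q\neq0$. The paper first divides numerator and denominator by $z^{\alpha_m}$ and shows $\operatorname{Im}Q>0$ on the upper half-plane. Your sector argument makes the same observation without that normalization. Note that it, too, uses $\alpha_1-\alpha_m\le 1$, so that hypothesis is needed in Step 1 as well, not only in Step 2 as you state.
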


\begin{proof}
When $m=1$, $g(\lambda)=1/\alpha_1$ is a positive constant and the conclusion is trivial. Assume now that $m>1$.
We have
\[
g(\lambda)=\frac{\sum\limits_{j=1}^{m} a_j\lambda^{\alpha_j} }{\sum\limits_{j=1}^{m} a_j\alpha_j\lambda^{\alpha_j} }
  =\frac{\sum\limits_{j=1}^{m} a_j\lambda^{\alpha_j-\alpha_m} }{\sum\limits_{j=1}^{m} a_j\alpha_j\lambda^{\alpha_j-\alpha_m} }.
\]

Let
\[
\Omega=\mathbb{C}\setminus (-\infty,0]
\]
and let
\[
z^\gamma=e^{\gamma \operatorname{Log}z}, \qquad -\pi<\arg z<\pi,
\]
denote the principal branch of the power function.
The functions
\[
P(z)=\sum\limits_{j=1}^{m} a_j z^{\alpha_j-\alpha_m} ,\qquad Q(z)=\sum\limits_{j=1}^{m} a_j\alpha_j z^{\alpha_j-\alpha_m}
\]
are holomorphic on $\Omega$.

We claim that $Q$ has no zeros in $\Omega$. Let $z = \rho e^{i\theta}$ with $\rho > 0$ and $0 < \theta < \pi$. Then
\[
\operatorname{Im} Q(z) = \sum_{j=1}^{m} a_j \alpha_j \rho^{\alpha_j-\alpha_m} \sin((\alpha_j-\alpha_m) \theta).
\]
Because $0 <\alpha_j-\alpha_m\leq 1$ ($j=1,2,\cdots,m-1$) and $0 < \theta < \pi$, one has $0 < (\alpha_j-\alpha_m) \theta < \pi$. Therefore
\[
\operatorname{Im} Q(z) > 0 \quad (\operatorname{Im} z > 0).
\]
By complex conjugation, $\operatorname{Im} Q(z) < 0$ when $\operatorname{Im} z < 0$; on the positive real axis, $Q$ is positive. Hence $Q(z) \neq 0$ throughout $\Omega$, and
\[
\Phi(z) := \frac{P(z)}{Q(z)}
\]
is holomorphic on $\Omega$.

We now determine the sign of $\operatorname{Im}\Phi(z)$ in the upper half-plane. For $z = \rho e^{i\theta}$, where $0 < \theta < \pi$,
\[
|Q(z)|^2 \operatorname{Im}\Phi(z) = \operatorname{Im}(P(z)\overline{Q(z)}).
\]
Expanding the product gives
\begin{align*}
\operatorname{Im}(P(z)\overline{Q(z)}) &= \operatorname{Im}\left( \sum_{i,j=1}^{m} a_i a_j  \alpha_j
                           \rho^{\alpha_i+\alpha_j-2\alpha_m} e^{i(\alpha_i-\alpha_j)\theta} \right) \\
&= \sum_{i,j=1}^{m} a_i a_j  \alpha_j  \rho^{\alpha_i+\alpha_j-2\alpha_m} \sin((\alpha_i-\alpha_j)\theta).
\end{align*}
The diagonal terms vanish. For $i \neq j$, pairing the terms indexed by $(i,j)$ and $(j,i)$ gives
\[
\begin{split}
&a_i a_j \rho^{\alpha_i+\alpha_j-2\alpha_m} \left[ \alpha_j \sin((\alpha_i-\alpha_j)\theta) + \alpha_i \sin((\alpha_j-\alpha_i)\theta) \right]\\
&\qquad = a_i a_j (\alpha_j - \alpha_i) \rho^{\alpha_i+\alpha_j-2\alpha_m} \sin((\alpha_i - \alpha_j)\theta).
\end{split}
\]
Consequently,
\begin{equation}\label{eq:Im-Phi}
\operatorname{Im}\Phi(z) = \frac{\sum_{1\leq i < j \leq m} a_i a_j (\alpha_j - \alpha_i) \rho^{\alpha_i+\alpha_j-2\alpha_m} \sin((\alpha_i-\alpha_j)\theta)}{|Q(z)|^2}.
\end{equation}
For $i < j$, one has $\alpha_j-\alpha_i<0$ and
\[
0 < (\alpha_i - \alpha_j)\theta \leq (\alpha_1-\alpha_m)\theta < \pi,
\]
since $\alpha_1-\alpha_m\leq 1$ and $0<\theta<\pi$; hence $\sin((\alpha_i-\alpha_j)\theta)>0$. It follows that
\[
\operatorname{Im}\Phi(z)<0  \qquad (\operatorname{Im}z>0).
\]

Thus $\Phi$ maps the upper half-plane into the lower half-plane. By complex conjugation, $\Phi$ maps the lower
half-plane into the upper half-plane. Finally,
\[
g(0+)=\lim_{\lambda\to0+}\frac{\sum_{j=1}^{m} a_j\lambda^{\alpha_j-\alpha_m} }{\sum_{j=1}^{m} a_j\alpha_j\lambda^{\alpha_j-\alpha_m} }=\frac{1}{\alpha_m}
\]
exists in $(0,\infty)$. Therefore $g(\lambda)$ is a Stieltjes function by Lemma \ref{lem:Stieltjes-criterion}.
\end{proof}

The following theorem is a generalization of Lemma \ref{lem:inverse-polynomial} and is the core Bernstein result of
this paper.

\begin{thm}\label{thm:inverse-multiterm}
Let $m\geq 2$ and
$$
\lambda=\lambda(\varphi) = \left(\sum\limits_{j=1}^{m-1} a_j\varphi^{\alpha_j} +a_m\varphi^\beta   \right)^{1/\beta}, \quad \varphi>0,
$$
where $0<\alpha_1-\beta\leq 1$, $0<\beta<\alpha_{m-1}<\cdots<\alpha_1$, $a_j>0$ ($j=1,2,\cdots, m$).
Then the inverse function $\varphi=\varphi(\lambda)$ is $\mathcal{BF}$ and $\varphi(0+)=0$.
\end{thm}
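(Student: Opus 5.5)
Write $\alpha_m:=\beta$ and $G(\varphi)=\sum_{j=1}^{m} a_j\varphi^{\alpha_j}$, so that $\lambda=G(\varphi)^{1/\beta}$. The exponents satisfy $0<\alpha_m<\cdots<\alpha_1$ and $0<\alpha_1-\alpha_m\leq1$, which are exactly the hypotheses of Theorem \ref{thm:Stieltjes-quotient}. The plan is to use Lemma \ref{lem:composition-CMF}: write $\varphi'(\lambda)$ as $f(\varphi(\lambda))$ with $f$ completely monotone.

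First, $F(\varphi):=G(\varphi)^{1/\beta}$ is a $C^\infty$ bijection of $(0,\infty)$ onto itself. It is strictly increasing since $G'>0$, it satisfies $F(0+)=0$, and $F(\infty)=\infty$. Hence the inverse $\varphi$ is $C^\infty$, positive, and satisfies $\varphi(0+)=0$.

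Next, differentiate $\lambda^\beta=G(\varphi)$ to get $\beta\lambda^{\beta-1}=G'(\varphi)\varphi'$. This gives
\[
\varphi'(\lambda)=\frac{\beta\lambda^{\beta-1}}{G'(\varphi)}=\frac{\beta\,G(\varphi)^{(\beta-1)/\beta}}{G'(\varphi)} .
\]
Since $\varphi G'(\varphi)=\sum_j a_j\alpha_j\varphi^{\alpha_j}$, we can rewrite this as
\[
\varphi'(\lambda)=\beta\,\frac{G(\varphi)}{\varphi G'(\varphi)}\cdot\frac{\varphi}{G(\varphi)^{1/\beta}}
=\beta\, g(\varphi)\cdot\frac{\varphi}{\lambda},
\]
where $g$ is the Stieltjes quotient of Theorem \ref{thm:Stieltjes-quotient}. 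So $\varphi'(\lambda)=f(\varphi(\lambda))$ with
\[
f(\varphi)=\beta\, g(\varphi)\, h(\varphi),\qquad h(\varphi):=\frac{\varphi}{G(\varphi)^{1/\beta}}=\Big(\sum_{j=1}^m a_j\varphi^{\alpha_j-\beta}\Big)^{-1/\beta}.
\]
The factor $g$ is completely monotone by Theorem \ref{thm:Stieltjes-quotient}. For $h$, note that $\sum_j a_j\varphi^{\alpha_j-\beta}$ is a Bernstein function, because each exponent $\alpha_j-\beta\in[0,1]$. The function $x\mapsto x^{-1/\beta}$ is completely monotone, so $h$ is completely monotone by Proposition \ref{prop:closure}(iii). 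Therefore $f=\beta gh$ is completely monotone by Proposition \ref{prop:closure}(i), and Lemma \ref{lem:composition-CMF} yields that $\varphi$ is $\mathcal{BF}$.

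The main obstacle is spotting the factorization of $\varphi'$ into the Stieltjes quotient $g$ times the function $h$, both evaluated at $\varphi$. A naive attempt to show that $1/F'$ is completely monotone fails for $\beta<1$, because $F'$ need not be Bernstein. Once this splitting is found, everything reduces to Theorem \ref{thm:Stieltjes-quotient} together with the standard closure properties in Proposition \ref{prop:closure}. It is also the one place where the hypothesis $\alpha_1-\beta\leq1$ is used twice: in Theorem \ref{thm:Stieltjes-quotient}, and to make the inner sum in $h$ Bernstein.
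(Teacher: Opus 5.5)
Your proposal is correct and follows essentially the same route as the paper's proof. Both factor $\varphi'(\lambda)$ as the Stieltjes quotient of Theorem \ref{thm:Stieltjes-quotient} times $\beta\bigl(\sum_j a_j\varphi^{\alpha_j-\beta}\bigr)^{-1/\beta}$, then conclude via Proposition \ref{prop:closure} and Lemma \ref{lem:composition-CMF}.
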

\begin{proof}
It is evident that $\lambda=\lambda(\varphi)$ ($\varphi>0$) is strictly increasing with $\lambda(0+)=0$ and
$\lambda(+\infty)=+\infty$, so the inverse function $\varphi=\varphi(\lambda)$ exists and satisfies $\varphi(0+)=0$.
We now show that $\varphi$ is $\mathcal{BF}$.

Differentiating $\lambda^\beta=\sum_{j=1}^{m-1} a_j\varphi^{\alpha_j}+a_m\varphi^{\beta}$ with respect to $\lambda$, we have
\begin{eqnarray*}
\varphi'(\lambda)&=&\frac{\beta \lambda^{\beta-1}}{\sum\limits_{j=1}^{m-1} a_j\alpha_j\varphi^{\alpha_j-1} +a_m\beta \varphi^{\beta-1}} \\
&=&\frac{\lambda^\beta}{\sum\limits_{j=1}^{m-1} a_j\alpha_j\varphi^{\alpha_j} +a_m\beta \varphi^{\beta}}\cdot\frac{\beta\varphi}{\lambda}\\
&=&\frac{\sum\limits_{j=1}^{m-1} a_j\varphi^{\alpha_j} +a_m \varphi^{\beta}}{\sum\limits_{j=1}^{m-1} a_j\alpha_j\varphi^{\alpha_j} +a_m\beta \varphi^{\beta}}
\cdot\frac{\beta\varphi}{\left(\sum\limits_{j=1}^{m-1} a_j\varphi^{\alpha_j} +a_m \varphi^{\beta}\right)^{1/\beta}}\\
&=&\frac{\sum\limits_{j=1}^{m-1} a_j\varphi^{\alpha_j} +a_m \varphi^{\beta}}{\sum\limits_{j=1}^{m-1} a_j\alpha_j\varphi^{\alpha_j} +a_m\beta \varphi^{\beta}}
\cdot\frac{\beta}{\left(\sum\limits_{j=1}^{m-1} a_j\varphi^{\alpha_j-\beta} +a_m \right)^{1/\beta}}\\
&:=& g(\varphi)\cdot f(\varphi).
\end{eqnarray*}
Since the largest order difference appearing in $g$ is $\alpha_1-\beta\leq1$, Theorem \ref{thm:Stieltjes-quotient}
implies that $g(\cdot)$ is a Stieltjes function, and hence $\mathcal{CMF}$.
Furthermore, $\sum_{j=1}^{m-1} a_j\varphi^{\alpha_j-\beta} +a_m$ is $\mathcal{BF}$ since $0<\alpha_j-\beta\leq1$,
and $x\mapsto \beta x^{-1/\beta}$ is $\mathcal{CMF}$, so $f(\cdot)$ is $\mathcal{CMF}$ by the composition property
(Proposition \ref{prop:closure} (iii)).
Hence $gf$ is $\mathcal{CMF}$ by Proposition \ref{prop:closure} (i).
Therefore $\varphi=\varphi(\lambda)$ is $\mathcal{BF}$ by Lemma \ref{lem:composition-CMF}.
\end{proof}

\begin{rem}\label{rem:two-term}
In the case $m=2$, Theorem \ref{thm:inverse-multiterm} can be proved directly. For brevity, consider
$a\varphi^{\alpha}+b\varphi^{\beta}=\lambda^{\beta}$ with $0< \beta < \alpha\leq 2$, $\alpha-\beta\leq 1$,
$a,b>0$. We have
\begin{eqnarray*}
\varphi'(\lambda)&=&\frac{\beta\lambda^{\beta-1}}{a\alpha \varphi^{\alpha-1}+b\beta \varphi^{\beta-1}}
=\frac{\lambda^{\beta}}{a\alpha \varphi^{\alpha}+b\beta \varphi^{\beta}}\cdot\frac{\beta\varphi}{\lambda}\\
&=&\frac{a\varphi^{\alpha}+b\varphi^{\beta}}{a\alpha \varphi^{\alpha}+b\beta \varphi^{\beta}}\cdot\frac{\beta\varphi}{(a\varphi^{\alpha}+b\varphi^{\beta})^{1/\beta}}
=\frac{a\varphi^{\alpha-\beta}+b}{a\alpha \varphi^{\alpha-\beta}+b\beta }\cdot\frac{\beta}{(a\varphi^{\alpha-\beta}+b)^{1/\beta}}\\
&=&\frac{\beta}{\alpha}\cdot \left[1+\frac{b(\alpha-\beta)}{a\alpha \varphi^{\alpha-\beta}+b\beta}\right]\cdot\frac{1}{(a\varphi^{\alpha-\beta}+b)^{1/\beta}}
:=\frac{\beta}{\alpha}\cdot h(\varphi).
\end{eqnarray*}
Then
$G(z)=\left[1+\frac{b(\alpha-\beta)}{a\alpha z+b\beta}\right]\cdot\frac{1}{(az+b)^{1/\beta}}$ is $\mathcal{CMF}$,
and $z=z(\varphi)=\varphi^{\alpha-\beta} $ is $\mathcal{BF}$ for $0<\alpha-\beta\leq 1$ by Proposition
\ref{prop:closure}. Thus $h(\varphi)=G(z(\varphi))$ is $\mathcal{CMF}$
with respect to $\varphi$, and $\varphi(\lambda)$ is $\mathcal{BF}$ by Lemma \ref{lem:composition-CMF}.
\end{rem}

By composing the inverse function in Theorem \ref{thm:inverse-multiterm} with a power function, the condition on
$\beta$ can be relaxed to all $0<\beta\leq\alpha_m$.

\begin{cor}\label{cor:relax-beta}
Let $m\geq 2$, $0<\beta\leq\alpha_m<\cdots<\alpha_1$ with $0<\alpha_1-\alpha_m\leq 1$, and $a_j>0$ ($j=1,2,\cdots, m$).
Then the inverse function $\varphi=\varphi(\lambda)$ ($\lambda>0$) of
\begin{equation}\label{eq:inverse-relax}
\lambda=\lambda(\varphi) = \left(\sum\limits_{j=1}^{m} a_j\varphi^{\alpha_j}    \right)^{1/\beta}, \quad \varphi>0
\end{equation}
is $\mathcal{BF}$ with $\varphi(0+)=0$.
\end{cor}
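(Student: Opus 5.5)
The plan is to write the inverse as a composition of the inverse function from Theorem \ref{thm:inverse-multiterm} with a power function, as the sentence preceding the corollary suggests. Set $\gamma:=\beta/\alpha_m\in(0,1]$ and consider the auxiliary relation
\[
\mu=\mu(\varphi)=\Big(\sum_{j=1}^{m} a_j\varphi^{\alpha_j}\Big)^{1/\alpha_m},\qquad \varphi>0 .
\]
Then $\lambda=\mu^{\alpha_m/\beta}$, so $\mu=\lambda^{\gamma}$. Hence if $\psi$ denotes the inverse of $\mu\mapsto\varphi$, we get $\varphi(\lambda)=\psi(\lambda^{\gamma})$.

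Next I would check that $\psi$ falls under Theorem \ref{thm:inverse-multiterm} with $\beta$ replaced by $\alpha_m$. Indeed, $\mu^{\alpha_m}=\sum_{j=1}^{m-1}a_j\varphi^{\alpha_j}+a_m\varphi^{\alpha_m}$. The hypotheses needed are $m\ge2$, $0<\alpha_m<\alpha_{m-1}<\cdots<\alpha_1$ and $0<\alpha_1-\alpha_m\le1$, which are exactly the assumptions of the corollary. Therefore $\psi$ is $\mathcal{BF}$ with $\psi(0+)=0$.

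Since $0<\gamma\le1$, the map $\lambda\mapsto\lambda^\gamma$ is $\mathcal{BF}$. Proposition \ref{prop:closure} (ii) then shows that $\varphi=\psi\circ(\cdot)^\gamma$ is $\mathcal{BF}$. Moreover, $\varphi(0+)=\psi(0+)=0$, because $\lambda^\gamma\to0$. Finally, I would note that $\lambda(\varphi)$ is a strictly increasing bijection of $(0,\infty)$, so the inverse is well defined and coincides with $\psi(\lambda^\gamma)$.

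No step is a real obstacle. The only care needed is the case $\beta=\alpha_m$, where $\gamma=1$ and the statement is just Theorem \ref{thm:inverse-multiterm} with its last exponent equal to $\beta$. I would also confirm that the theorem's strict inequality $\beta<\alpha_{m-1}$ is satisfied with $\alpha_m$ in the role of $\beta$, which holds since the exponents are strictly ordered.
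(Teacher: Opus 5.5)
Your proposal is correct and follows the paper's proof essentially verbatim. The paper also sets $G(\varphi)=\bigl(\sum_{j=1}^m a_j\varphi^{\alpha_j}\bigr)^{1/\alpha_m}$ and obtains that $G^{-1}$ is $\mathcal{BF}$ from Theorem \ref{thm:inverse-multiterm}, with $\alpha_m$ in the role of $\beta$. It then writes $\varphi(\lambda)=G^{-1}(\lambda^{\beta/\alpha_m})$ as a composition of two Bernstein functions.
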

\begin{proof}
The case $\beta=\alpha_m$ is Theorem \ref{thm:inverse-multiterm}. For general $0<\beta\leq\alpha_m$, define
$$
G(\varphi)=\left(\sum\limits_{j=1}^{m} a_j\varphi^{\alpha_j}\right)^{1/\alpha_m}, \quad \varphi>0.
$$
Its inverse function $G^{-1}$ is $\mathcal{BF}$ with $G^{-1}(0+)=0$ by Theorem \ref{thm:inverse-multiterm}.
Since $\lambda(\varphi)=G(\varphi)^{\alpha_m/\beta}$, the inverse function of \eqref{eq:inverse-relax} is
$$
\varphi(\lambda)=G^{-1}\left(\lambda^{\beta/\alpha_m}\right).
$$
The power function $\lambda\mapsto\lambda^{\beta/\alpha_m}$ is $\mathcal{BF}$ because $0<\beta/\alpha_m\leq1$,
and the composition of two Bernstein functions is again $\mathcal{BF}$ (Proposition \ref{prop:closure} (ii)).
Clearly, $\varphi(0+)=0$.
\end{proof}

\begin{cor}\label{cor:attenuated}
Let $a,b>0$ and $0<\beta<\alpha\leq 2$. Then the inverse function $\varphi=\varphi(\lambda)$ of
$$
a^2\varphi^\alpha+2ab\varphi^{(\alpha+\beta)/2}+b^2\varphi^\beta=\lambda^\beta
$$
is $\mathcal{BF}$ with $\varphi(0+)=0$.
\end{cor}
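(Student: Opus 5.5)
This is a three-term instance of Corollary \ref{cor:relax-beta}, with $m=3$, exponents $\alpha_1=\alpha$, $\alpha_2=(\alpha+\beta)/2$, $\alpha_3=\beta$, and coefficients $a_1=a^2$, $a_2=2ab$, $a_3=b^2$. The orders are strictly decreasing, $\alpha>(\alpha+\beta)/2>\beta>0$, and the coefficients are positive. Taking the exponent $\beta$ in Corollary \ref{cor:relax-beta} equal to $\alpha_m=\beta$, the defining relation $\lambda=(a^2\varphi^\alpha+2ab\varphi^{(\alpha+\beta)/2}+b^2\varphi^\beta)^{1/\beta}$ matches \eqref{eq:inverse-relax}. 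The Bernstein property and $\varphi(0+)=0$ then follow, provided $\alpha_1-\alpha_m=\alpha-\beta\le 1$.

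That proviso is the real obstacle, because the statement allows $0<\beta<\alpha\le2$, so $\alpha-\beta$ can be as large as $2$. When $\alpha-\beta>1$, I would use the special structure: the left side is the perfect square $(a\varphi^{\alpha/2}+b\varphi^{\beta/2})^2$. The relation therefore becomes
\[
a\varphi^{\alpha/2}+b\varphi^{\beta/2}=\lambda^{\beta/2},
\]
which is equivalent to
\[
\lambda=\left(a\varphi^{\alpha/2}+b\varphi^{\beta/2}\right)^{2/\beta}.
\]
This is a two-term instance of Corollary \ref{cor:relax-beta}, or of Theorem \ref{thm:inverse-multiterm} with $m=2$, with orders $\alpha/2>\beta/2$ and the exponent there equal to $\beta/2=\alpha_m$. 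The needed gap condition is now $\alpha/2-\beta/2=(\alpha-\beta)/2\le 1$, which always holds since $\alpha\le 2$ and $\beta>0$.

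So I would write the proof directly in this reduced form, treating all cases at once without any case split. The steps are:
\begin{enumerate}
\item Take the positive square root of both sides, legitimate since all quantities are positive for $\varphi,\lambda>0$.
\item Note that $\varphi\mapsto(a\varphi^{\alpha/2}+b\varphi^{\beta/2})^{2/\beta}$ is a strictly increasing bijection of $(0,\infty)$ onto itself with limit $0$ at $0+$. Hence the inverse $\varphi(\lambda)$ is the same function as the inverse in the statement, and $\varphi(0+)=0$.
\item Invoke Theorem \ref{thm:inverse-multiterm} (case $m=2$, with $\alpha_1=\alpha/2$, the exponent $\beta/2$, $a_1=a$, $a_2=b$). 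Its hypotheses are $0<\alpha/2-\beta/2\le1$ and $0<\beta/2<\alpha/2$, both satisfied, and it gives $\varphi\in\mathcal{BF}$.
\end{enumerate}
Alternatively, Remark \ref{rem:two-term} can be cited for the same two-term case.
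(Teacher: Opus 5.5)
Your proposal is correct and matches the paper's proof: it also writes the left side as $(a\varphi^{\alpha/2}+b\varphi^{\beta/2})^2$, takes square roots, and applies the two-term case of Theorem \ref{thm:inverse-multiterm} with orders $\alpha/2,\beta/2$ and target power $\beta/2$, where the gap condition $(\alpha-\beta)/2\le 1$ holds automatically. You also correctly note that a direct three-term application of Corollary \ref{cor:relax-beta} would fail when $\alpha-\beta>1$, which is why the square-root reduction is needed.
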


\begin{proof}
We have
$$
(a\varphi^{\alpha/2}+b\varphi^{\beta/2})^2=\lambda^\beta,
$$
hence
$$
a\varphi^{\alpha/2}+b\varphi^{\beta/2}=\lambda^{\beta/2}.
$$
This is the two-term case of Theorem \ref{thm:inverse-multiterm} with orders $\alpha/2$, $\beta/2$ and target power
$\beta/2$, where $\alpha/2-\beta/2=(\alpha-\beta)/2\leq 1$ is automatically satisfied since $\alpha\leq2$.
Therefore $\varphi=\varphi(\lambda)$ is $\mathcal{BF}$ with $\varphi(0+)=0$.
\end{proof}

Another useful observation is: when two characteristics share the same outer symbol, the
Bernstein property of $F^{-1}\circ G$ is independent of that outer symbol.

\begin{prop}\label{prop:shared-outer}
Let $f(\cdot)$, $g(\cdot)$ and $Q(\cdot)$ be strictly increasing bijections of $(0,\infty)$ with
$f(0+)=g(0+)=0$, and set
$$
F(\lambda)=Q(f(\lambda)),\qquad G(\lambda)=Q(g(\lambda)),\qquad \lambda>0.
$$
Then $F$ and $G$ are strictly increasing bijections of $(0,\infty)$, and
\begin{equation}\label{eq:shared-outer}
F^{-1}\circ G=f^{-1}\circ g.
\end{equation}
In particular, $F^{-1}\circ G$ is $\mathcal{BF}$ if and only if $f^{-1}\circ g$ is $\mathcal{BF}$.
\end{prop}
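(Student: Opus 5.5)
The argument is purely set-theoretic and rests on the fact that compositions and inverses of strictly increasing bijections of $(0,\infty)$ are again strictly increasing bijections.

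First I show that $F$ and $G$ are strictly increasing bijections of $(0,\infty)$. Since $f$ and $Q$ are strictly increasing bijections of $(0,\infty)$, their composition $F=Q\circ f$ is one as well; the same holds for $G=Q\circ g$. Hence $F^{-1}$ exists and equals $f^{-1}\circ Q^{-1}$, where $f^{-1}$ and $Q^{-1}$ are again strictly increasing bijections of $(0,\infty)$.

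Next I compute the composition directly:
\[
F^{-1}\circ G=(f^{-1}\circ Q^{-1})\circ(Q\circ g)=f^{-1}\circ (Q^{-1}\circ Q)\circ g=f^{-1}\circ g,
\]
which is \eqref{eq:shared-outer}. The ``in particular'' statement follows at once, because the two sides are the same function on $(0,\infty)$, so one is $\mathcal{BF}$ exactly when the other is. I would also note that the limit at $0+$ agrees: $f^{-1}(g(0+))=0$, using that $f^{-1}$ is an increasing bijection.

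There is no real obstacle. The only point needing care is checking that the domains and ranges match, namely that $G$ maps into $(0,\infty)$, the domain of $F^{-1}$. This is guaranteed by the bijectivity hypotheses. The conditions $f(0+)=g(0+)=0$ are not needed for the identity itself; they only ensure that the resulting function vanishes at $0+$, as required for a Laplace exponent.
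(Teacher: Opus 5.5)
Your proof is correct and matches the paper's argument: write $F^{-1}=f^{-1}\circ Q^{-1}$ and substitute to get $F^{-1}\circ G=f^{-1}\circ g$, from which the ``in particular'' claim follows at once. A minor aside: the hypotheses $f(0+)=g(0+)=0$ are not merely unneeded for the identity, they hold automatically for any strictly increasing bijection of $(0,\infty)$.
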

\begin{proof}
Since $Q$ is a bijection of $(0,\infty)$, so are $F$ and $G$, and $F^{-1}=f^{-1}\circ Q^{-1}$.
Substituting this into $F^{-1}\circ G$ yields \eqref{eq:shared-outer}.
\end{proof}

\begin{rem}\label{rem:shared-outer-telegraph}
Proposition \ref{prop:shared-outer} applies in particular to the characteristics of telegraph-type
equations: for $0<\beta\leq\alpha$ and $a>0$,
$$
F(\lambda)=\lambda^{2\alpha}+2a\lambda^{\alpha}=Q(\lambda^{\alpha}),\qquad
G(\lambda)=\lambda^{2\beta}+2a\lambda^{\beta}=Q(\lambda^{\beta}),\qquad
Q(\mu)=\mu^2+2a\mu,
$$
hence $F^{-1}\circ G(\lambda)=\lambda^{\beta/\alpha}$ is $\mathcal{BF}$ for all $0<\beta\leq\alpha$,
with no restriction on the gap between the highest order $2\beta$ of $G$ and the lowest order $\alpha$
of $F$. This observation is used essentially in Example \ref{ex:fractional-telegraph}.
\end{rem}

Next, we turn to Bernstein functions related to tempered stable processes.

\begin{prop}\label{thm:CBF-tempered-power}
Let $0<\beta\leq 1$, $h>0$, $\gamma>0$ and $0<\beta\gamma\leq 1$. Then the function
$\varphi(\lambda)=(\lambda^\beta+h)^{\gamma}$ ($\lambda>0$) is $\mathcal{CBF}$.
\end{prop}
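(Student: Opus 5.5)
We verify criterion (iii) of Proposition \ref{prop:CBF-characterization} for $\varphi(z)=(z^\beta+h)^\gamma$, using the principal branch of every power. First, $\varphi$ is nonnegative on $(0,\infty)$, and $\varphi(0+)=h^\gamma$ exists and is real. Since the sign condition $\operatorname{Im} z\cdot\operatorname{Im}\varphi(z)\ge 0$ is symmetric under conjugation, it suffices to treat the upper half-plane, where we will show that $\operatorname{Im}\varphi(z)\ge 0$.

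Take $z=\rho e^{i\theta}$ with $0<\theta<\pi$.

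\textbf{Step 1.} Since $0<\beta\le 1$, the number $w=z^\beta$ satisfies $0<\arg w=\beta\theta<\pi$. In particular $z^\beta+h\neq 0$ on $\Omega=\mathbb{C}\setminus(-\infty,0]$, because this sum is real only when $z>0$.

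\textbf{Step 2.} Adding the positive constant $h$ moves $w$ to the right without changing its imaginary part, which decreases the argument. Hence
\[
0<\arg(z^\beta+h)<\arg z^\beta=\beta\theta .
\]

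\textbf{Step 3.} It follows that $z\mapsto z^\beta+h$ is holomorphic on $\Omega$ and takes values in $\Omega$. So $\varphi(z)=e^{\gamma\operatorname{Log}(z^\beta+h)}$ is holomorphic on $\Omega$, and
\[
\arg\varphi(z)=\gamma\arg(z^\beta+h)\in(0,\gamma\beta\theta).
\]

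\textbf{Step 4.} Because $\gamma\beta\le 1$ and $\theta<\pi$, we get $\gamma\beta\theta<\pi$. Therefore $\arg\varphi(z)\in(0,\pi)$, which gives $\operatorname{Im}\varphi(z)>0$. Proposition \ref{prop:CBF-characterization} then yields $\varphi\in\mathcal{CBF}$.

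The only real obstacle is Step 2. Its force is that the bound on $\arg(z^\beta+h)$ is the \emph{product} $\beta\theta$, not merely $\pi$. This is exactly what lets $\gamma$ exceed $1$, provided $\beta\gamma\le1$. The naive bound $0<\arg<\pi$ would only handle $\gamma\le1$, which already follows from composing Bernstein functions. The strict inequality in Step 2 rests on elementary geometry. Write $w=r e^{i\psi}$ with $0<\psi<\pi$. Then
\[
\cot\arg(w+h)=\frac{r\cos\psi+h}{r\sin\psi}>\cot\psi ,
\]
and since $\cot$ is decreasing on $(0,\pi)$, this gives $\arg(w+h)<\psi$.
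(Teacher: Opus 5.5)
Your proof is correct and follows essentially the same route as the paper: you extend $\varphi$ holomorphically to $\mathbb{C}\setminus(-\infty,0]$ via principal branches, bound $0<\arg(z^\beta+h)<\beta\theta$ in the upper half-plane, and conclude $0<\arg\varphi(z)<\beta\gamma\pi\le\pi$ before invoking Proposition \ref{prop:CBF-characterization}. The only addition is your explicit cotangent justification of the argument-decreasing step, which the paper simply asserts (with a non-strict bound).
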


\begin{proof}
The limit
$$
\varphi(0+)=\lim_{(0,\infty)\ni\lambda\rightarrow 0}\varphi(\lambda)=h^\gamma
$$
clearly exists.

Let $\operatorname{Log}$ denote the principal branch of the logarithm on $\mathbb C\setminus(-\infty,0]$. Define
\[
F(z)=\bigl(z^\beta+h\bigr)^\gamma
= \exp\Bigl(\gamma \operatorname{Log}\bigl(z^\beta+h\bigr)\Bigr),
\qquad z\in\mathbb C\setminus(-\infty,0],
\]
where $z^\beta:=\exp(\beta\operatorname{Log} z)$.

First, $F$ is well-defined and analytic on $\mathbb C\setminus(-\infty,0]$. Indeed, for $z\notin(-\infty,0]$, write $\arg z=\theta\in(-\pi,\pi)$. Then
\[
\arg(z^\beta)=\beta\theta\in(-\beta\pi,\beta\pi)\subset(-\pi,\pi),
\]
so $z^\beta\notin(-\infty,0]$. Since $h> 0$, the sum $z^\beta+h$ also avoids $(-\infty,0]$. Hence the logarithm is defined, and $F$ is analytic. For $\lambda>0$, $F(\lambda)=\varphi(\lambda)$, so $F$ is the required extension.

Now let $\operatorname{Im} z>0$ and set $\theta=\arg z\in(0,\pi)$. Then
\[
\arg(z^\beta)=\beta\theta\in(0,\beta\pi)\subset(0,\pi).
\]
Adding $h> 0$ gives
\[
0<\arg(z^\beta+h)\le \beta\theta.
\]
Therefore
\[
\arg F(z)=\gamma\arg(z^\beta+h)\le \beta\gamma\,\theta < \beta\gamma\,\pi \le \pi,
\]
because $\theta<\pi$ and $\beta\gamma\le 1$. Also $\arg F(z)>0$. Thus
\[
0<\arg F(z)<\pi,
\]
so $\operatorname{Im} F(z)>0$ for all $\operatorname{Im} z>0$. By Proposition \ref{prop:CBF-characterization}, $\varphi$ is a complete Bernstein function.
\end{proof}

\begin{rem}\label{rem:BGT-open-question}
Proposition \ref{thm:CBF-tempered-power} is closely related to the operation $f\mapsto h_\alpha$,
$h_\alpha(z)=f(z^\alpha)^{1/\alpha}$, on Bernstein functions studied by Batty, Gomilko and Tomilov
\cite{BattyGomilkoTomilov2017}: it is known that $h_\alpha\in\mathcal{BF}$ for every $f\in\mathcal{BF}$ when
$\alpha\in(0,1/2]$ \cite[Proposition 3.3(2)]{BattyGomilkoTomilov2017}, that the case $\alpha\in(1/2,1)$ for
general $f\in\mathcal{BF}$ remains open, and that $h_\alpha\in\mathcal{CBF}$ for all $\alpha\in(0,1)$ whenever
$f\in\mathcal{CBF}$ \cite[Proposition 3.11(2)]{BattyGomilkoTomilov2017}. Indeed, Proposition
\ref{thm:CBF-tempered-power} can be deduced from the latter result: with $\gamma=1/\beta$ one has
$(\lambda^\beta+h)^{1/\beta}=f(\lambda^\beta)^{1/\beta}$ for $f(w)=w+h\in\mathcal{CBF}$, and the case
$\beta\gamma<1$ follows by further composing with the $\mathcal{CBF}$ function $x\mapsto x^{\beta\gamma}$.
For the reader's convenience, we have included the short self-contained proof above.
\end{rem}

Note that the exponent $\gamma$ in Proposition \ref{thm:CBF-tempered-power} is allowed to be greater than $1$, in which
case the conclusion cannot be obtained by composing $\lambda^\beta$ with $x^\gamma$ directly. The next proposition
shows that, for the shifted tempered function $[(\lambda+b)^\beta-b^\beta]^\gamma$, the restriction $\gamma\leq1$
becomes necessary.

\begin{prop}\label{prop:tempered-iff}
Let $b>0$, $0<\beta<1$, $\gamma>0$, and
$\psi(\lambda)=[(\lambda+b)^\beta-b^\beta]^\gamma$.
Then $\psi(\lambda)$ ($\lambda>0$) is $\mathcal{BF}$ if and only if $0<\gamma\leq1$.
\end{prop}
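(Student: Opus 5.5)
Sufficiency is a closure argument. The function $\lambda\mapsto(\lambda+b)^\beta-b^\beta$ is a Bernstein function: it is nonnegative on $(0,\infty)$, and its derivative $\beta(\lambda+b)^{\beta-1}$ is completely monotone because $\beta-1<0$. For $0<\gamma\le1$, the power $x\mapsto x^\gamma$ is also $\mathcal{BF}$. By Proposition \ref{prop:closure} (ii) the composition $\psi$ is $\mathcal{BF}$. (One could even note it is $\mathcal{CBF}$, since the shifted power is the exponent of a tempered stable subordinator, but this is not needed.)

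Necessity is by contradiction: suppose $\gamma>1$ and $\psi\in\mathcal{BF}$, and look at the behaviour at $\lambda\to0+$. Near zero,
\[
(\lambda+b)^\beta-b^\beta=\beta b^{\beta-1}\lambda+O(\lambda^2),
\qquad\text{so}\qquad
\psi(\lambda)=(\beta b^{\beta-1})^\gamma\lambda^\gamma\,(1+O(\lambda)).
\]
Hence $\psi(\lambda)/\lambda\to0$ as $\lambda\to0+$, since $\gamma>1$. On the other hand, every Bernstein function is concave, being nonnegative with nonincreasing derivative. So $\psi(\lambda)/\lambda$ is nonincreasing whenever $\psi(0+)=0$; equivalently, by Proposition \ref{prop:closure} (iv), $\psi(\lambda)/\lambda$ is completely monotone and therefore nonincreasing. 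Since $\psi(\lambda)/\lambda$ is positive for $\lambda>0$, it is bounded below near $0$ by its value at $\lambda=1$. This contradicts the limit being $0$.

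A more elementary route to the same contradiction: $\psi'(\lambda)=\gamma(\beta b^{\beta-1})^\gamma\lambda^{\gamma-1}(1+o(1))\to0$ as $\lambda\to0+$, whereas $\psi'>0$ is nonincreasing. That would force $\psi'\equiv0$, impossible since $\psi$ is strictly increasing.

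The only points needing care are the Taylor expansion at $0$, where $b>0$ is used essentially because the base point $b$ lies away from the singularity of $x^\beta$, and the fact that a positive nonincreasing $\psi'$ cannot tend to $0$ at the left endpoint. I expect no real obstacle. The main conceptual point is that the shift by $b$ linearizes the function at $0$, which is exactly what excludes $\gamma>1$; in contrast, Proposition \ref{thm:CBF-tempered-power} permits $\gamma>1$ because there the inner function behaves like $\lambda^\beta$ rather than like $\lambda$ near the origin.
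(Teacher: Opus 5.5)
Your proof is correct. The sufficiency half is the paper's composition argument.

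For necessity you differ slightly in execution:
\begin{itemize}
\item The paper computes $\psi''$ explicitly. It shows that the bracket $(\beta\gamma-1)(\lambda+b)^\beta+(1-\beta)b^\beta$ tends to $\beta(\gamma-1)b^\beta>0$, so $\psi''>0$ near $0$.
\item You detect the same violation of the monotonicity of $\psi'$ at the same place, but one order earlier. Since $\psi'(\lambda)=\beta\gamma[(\lambda+b)^\beta-b^\beta]^{\gamma-1}(\lambda+b)^{\beta-1}\to 0$ as $\lambda\to0+$ when $\gamma>1$, or equivalently $\psi(\lambda)/\lambda\to0$, this is incompatible with $\psi'$ being positive and nonincreasing.
\end{itemize}

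Your version is lighter. The $\psi'$ route needs no second derivative and not even the Taylor expansion: it only uses that the inner function vanishes at $0$ and that $\gamma-1>0$.

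The paper's second-order computation has a different advantage: it is the template for the test in Remark \ref{rem:open-tempered}. There the inner function does not vanish at the origin, so $\varphi'(0+)>0$ and a first-order argument like yours would detect nothing.
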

\begin{proof}
If $0<\gamma\leq1$, then $\psi=g\circ f$ with $f(\lambda)=(\lambda+b)^\beta-b^\beta$ and $g(x)=x^\gamma$.
Both $f$ and $g$ are $\mathcal{BF}$, hence $\psi$ is $\mathcal{BF}$ by Proposition \ref{prop:closure} (ii).

For the necessity, we compute
\[
\psi'(\lambda)=\beta\gamma[(\lambda+b)^\beta-b^\beta]^{\gamma-1} (\lambda+b)^{\beta-1}
\]
and
\[
\psi''(\lambda)=\beta\gamma[(\lambda+b)^\beta-b^\beta]^{\gamma-2}(\lambda+b)^{\beta-2}\cdot \left[(\beta\gamma-1) (\lambda+b)^{\beta}+(1-\beta)b^\beta\right].
\]
When $\gamma>1$, we obtain
\[
\lim\limits_{\lambda\to 0+}\left[(\beta\gamma-1) (\lambda+b)^{\beta}+(1-\beta)b^\beta\right] = (\beta\gamma-1) b^{\beta}+(1-\beta)b^\beta =\beta(\gamma-1) b^{\beta}>0.
\]
By continuity,  $\psi''(\lambda)>0$ for small $\lambda>0$, which contradicts the complete monotonicity of $\psi'$.
Therefore $\psi(\lambda)$ is not  $\mathcal{BF}$  when $\gamma>1$.
\end{proof}

Proposition \ref{prop:tempered-iff} shows that for the purely shifted power
$[(\lambda+b)^\beta-b^\beta]^\gamma$ the restriction $\gamma\leq1$ is necessary. The next result
shows that an additive constant inside the outer power changes the picture: an outer power
$1/\alpha>1$ becomes admissible precisely when the constant is large enough.

\begin{prop}\label{prop:shifted-power-iff}
Let $a,b>0$ and $0<\beta<\alpha<1$. Define
\[
\varphi(\lambda)=\bigl[(\lambda+b)^\beta+a^\alpha-b^\beta\bigr]^{1/\alpha}-a,
\qquad \lambda>0.
\]
Then $\varphi$ is $\mathcal{BF}$ if and only if $a^\alpha\geq b^\beta$.
\end{prop}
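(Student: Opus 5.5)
We write $c:=a^\alpha-b^\beta$, so that $\varphi(\lambda)=[(\lambda+b)^\beta+c]^{1/\alpha}-a$. Note that $\varphi(0+)=(b^\beta+c)^{1/\alpha}-a=a-a=0$, and $\varphi$ is increasing and positive on $(0,\infty)$.

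\emph{Sufficiency.} Assume $c\geq0$. Write $\varphi=\Psi(\lambda+b)-a$ with $\Psi(\mu)=(\mu^\beta+c)^{1/\alpha}$. If $c>0$, Proposition \ref{thm:CBF-tempered-power} (with $h=c$, $\gamma=1/\alpha$, and $\beta\gamma=\beta/\alpha<1$) gives $\Psi\in\mathcal{CBF}$. If $c=0$, then $\Psi(\mu)=\mu^{\beta/\alpha}$, which is also $\mathcal{CBF}$. In either case $\Psi'$ is $\mathcal{CMF}$. The shift $\lambda\mapsto\lambda+b$ preserves complete monotonicity, so $\varphi'(\lambda)=\Psi'(\lambda+b)$ is $\mathcal{CMF}$. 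Together with $\varphi\geq0$ (from $\varphi(0+)=0$ and monotonicity), this shows $\varphi\in\mathcal{BF}$. Alternatively, one can check the Nevanlinna property directly as in the proof of Proposition \ref{thm:CBF-tempered-power}: for $\operatorname{Im}z>0$, $z+b$ stays in the upper half-plane and the argument estimates go through unchanged.

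\emph{Necessity.} Assume $c<0$, i.e.\ $a^\alpha<b^\beta$. The plan is to mimic the proof of Proposition \ref{prop:tempered-iff} and show that $\varphi''>0$ near $0+$. Set $u=(\lambda+b)^\beta$ and $\gamma=1/\alpha$. Then
\[
\varphi'=\beta\gamma(u+c)^{\gamma-1}(\lambda+b)^{\beta-1},
\]
and differentiating once more yields
\[
\varphi''=\beta\gamma(u+c)^{\gamma-2}(\lambda+b)^{\beta-2}\bigl[(\beta\gamma-1)u+(\beta-1)(u+c)\bigr].
\]
Simplifying the bracket gives
\[
(\beta\gamma-1)u+(\beta-1)(u+c)=(\beta\gamma+\beta-2)u+(\beta-1)c .
\]
I should double-check this algebra, so I redo it more carefully. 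Writing $\varphi'=\beta\gamma\,(u+c)^{\gamma-1}u\,(\lambda+b)^{-1}$ and taking the logarithmic derivative gives
\[
\frac{\varphi''}{\varphi'}=\frac{(\gamma-1)\beta u}{(u+c)(\lambda+b)}+\frac{\beta-1}{\lambda+b}.
\]
Hence the sign of $\varphi''$ is the sign of $(\gamma-1)\beta u+(\beta-1)(u+c)=(\beta\gamma-1)u+(1-\beta)(-c)$, where the factor $u+c$ is positive. This is exactly the same bracket as in Proposition \ref{prop:tempered-iff}, now with $-c$ in place of $b^\beta$.

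At $\lambda=0+$ we have $u=b^\beta$ and $u+c=a^\alpha$, so the bracket equals
\[
(\beta\gamma-1)b^\beta-(1-\beta)c=\beta\gamma b^\beta-b^\beta-(1-\beta)a^\alpha+(1-\beta)b^\beta=\beta\bigl(b^\beta/\alpha-b^\beta\bigr)+(1-\beta)\bigl(b^\beta-a^\alpha\bigr).
\]
Both terms are positive, the first because $\alpha<1$ and the second because $c<0$. By continuity, $\varphi''>0$ for small $\lambda>0$, which contradicts the complete monotonicity of $\varphi'$. Therefore $\varphi\notin\mathcal{BF}$ when $a^\alpha<b^\beta$.

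The main obstacle is the sign computation in the necessity part. I would verify it numerically on a test case before writing it up.
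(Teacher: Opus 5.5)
\textbf{There is a genuine gap in the necessity direction.} Your sufficiency argument is fine and coincides with the paper's: Proposition \ref{thm:CBF-tempered-power} with $\gamma=1/\alpha$, together with the fact that the shift keeps $\varphi'(\lambda)=\Psi'(\lambda+b)$ completely monotone.

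The necessity part fails because of an algebra slip in your last display. Your corrected bracket $(\beta\gamma-1)u+(1-\beta)(-c)$ is right. At $\lambda=0+$, however, it equals
\[
\Big(\frac{\beta}{\alpha}-1\Big)b^\beta+(1-\beta)\big(b^\beta-a^\alpha\big)=\beta\Big(\frac1\alpha-1\Big)b^\beta-(1-\beta)a^\alpha ,
\]
and not $\beta(b^\beta/\alpha-b^\beta)+(1-\beta)(b^\beta-a^\alpha)$. You counted the term $(1-\beta)b^\beta$ twice.

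In the correct form, the first summand is negative because $\beta<\alpha$. This differs from Proposition \ref{prop:tempered-iff}, where the constant term was the full $(1-\beta)b^\beta$. Consequently the bracket at the origin is positive only when $a^\alpha/b^\beta<\beta(1-\alpha)/(\alpha(1-\beta))$, and this threshold is strictly less than $1$.

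It gets worse. The coefficient $\beta/\alpha-1$ of $u=(\lambda+b)^\beta$ is negative and $u$ increases with $\lambda$, so the bracket is largest at $\lambda=0+$. Hence for $\beta(1-\alpha)/(\alpha(1-\beta))\le a^\alpha/b^\beta<1$ you get $\varphi''\le0$ on all of $(0,\infty)$. No second-order test, at the origin or anywhere else, can detect the failure there. Your planned numerical check would have exposed this: $\alpha=1/2$, $\beta=1/4$, $b=1$, $a^\alpha=0.9$ gives the value $-0.425$ at the origin.

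The missing ingredient is a \emph{global} argument, which is what the paper uses.
\begin{itemize}
\item[(a)] Put $d=b^\beta-a^\alpha>0$ and $r=b-d^{1/\beta}>0$. The derivative $h(z)=\varphi'(z)=\frac{\beta}{\alpha}(z+b)^{\beta-1}\big((z+b)^\beta-d\big)^{1/\alpha-1}$ extends holomorphically (principal branches) to $\mathbb{C}\setminus(-\infty,-r]$.
\item[(b)] Therefore its Taylor series at any $x_0>0$ converges on the disk $|z-x_0|<x_0+r$.
\item[(c)] If $h$ were completely monotone, every coefficient of its expansion in powers of $(x_0-y)$ would be nonnegative. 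This gives $h(y)\ge h(x_0)>0$ for all $y\in(-r,x_0)$.
\item[(d)] But $h(y)\to0$ as $y\downarrow-r$, because $(y+b)^\beta-d\to0$ while the exponent $1/\alpha-1$ is positive. This contradiction covers the whole range $a^\alpha<b^\beta$.
\end{itemize}
Once corrected, your second-derivative computation only handles the subrange $a^\alpha<\frac{\beta(1-\alpha)}{\alpha(1-\beta)}\,b^\beta$, exactly as noted in Remark \ref{rem:open-tempered}.
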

\begin{proof}
Set $c=a^\alpha-b^\beta$ and note that $\varphi(0+)=0$ in any case.

\textit{Sufficiency.} Assume $c\geq0$ and let $\psi(\mu)=(\mu^\beta+c)^{1/\alpha}$, $\mu>0$.
If $c>0$, then $\psi$ is $\mathcal{CBF}$ by Proposition \ref{thm:CBF-tempered-power} applied with
$\gamma=1/\alpha$, since $\beta\cdot(1/\alpha)<1$; if $c=0$, then $\psi(\mu)=\mu^{\beta/\alpha}$ is
obviously $\mathcal{CBF}$. Write $\varphi(\lambda)=\psi(\lambda+b)-\psi(b)$. The shift
$\mu\mapsto\mu+b$ preserves complete monotonicity: by Proposition \ref{prop:LK-representation} (1),
$\psi'(\mu)=\int_{[0,\infty)}e^{-\mu t}\nu(dt)$ for a nonnegative measure $\nu$, hence
\[
\varphi'(\lambda)=\psi'(\lambda+b)=\int_{[0,\infty)}e^{-\lambda t}\,e^{-bt}\nu(dt),
\qquad \lambda>0,
\]
is again $\mathcal{CMF}$. Since $\varphi'\geq0$ and $\varphi(0+)=\psi(b)-\psi(b)=0$, we conclude
that $\varphi$ is $\mathcal{BF}$.

\textit{Necessity.} Assume $a^\alpha<b^\beta$ and set
\[
d=b^\beta-a^\alpha>0,\qquad r=b-d^{1/\beta}>0,\qquad q=\frac1\alpha-1>0.
\]
The derivative is
\[
h(z)=\varphi'(z)=\frac{\beta}{\alpha}(z+b)^{\beta-1}\bigl((z+b)^\beta-d\bigr)^q .
\]
With principal branches, $h$ is holomorphic on $\Omega_r=\mathbb C\setminus(-\infty,-r]$.
Indeed, the cut of $z+b$ is contained in the excluded ray; for nonreal $z$ in this domain,
$(z+b)^\beta$ has imaginary part of the same sign as $z$, so subtracting the positive real number
$d$ does not move the inner expression onto the cut of the outer power; and for real $z>-r$ the
inner expression is strictly positive.

Fix $x_0>0$. Since $h$ is holomorphic on $\Omega_r$ and the disk $|z-x_0|<x_0+r$ is contained in
$\Omega_r$, the Taylor series of $h$ around $x_0$ converges throughout this disk. If $\varphi$ were
$\mathcal{BF}$, then $h=\varphi'$ would be $\mathcal{CMF}$, that is, $(-1)^n h^{(n)}(x_0)\geq0$ for
all $n\geq0$, and for every real $y\in(-r,x_0)$ we would obtain
\[
h(y)=\sum_{n=0}^{\infty}\frac{(-1)^n h^{(n)}(x_0)}{n!}(x_0-y)^n\geq h(x_0)>0.
\]
On the other hand, as $y\downarrow-r$, the factor $(y+b)^{\beta-1}$ tends to the finite positive
value $(b-r)^{\beta-1}$, while $\bigl((y+b)^\beta-d\bigr)^q$ tends to zero because $q>0$. Thus
$h(y)\to0$, a contradiction. Therefore $\varphi$ is not $\mathcal{BF}$ when $a^\alpha<b^\beta$.
\end{proof}

\section{Subordination from multi-term to single-term TFDEs}\label{sec:multiterm}

Now we prove our main result on the subordination representation from the multi-term equation \eqref{eq:MTFDE} to
the single-term equation \eqref{eq:STFDE}.

\begin{thm}\label{thm:MTFDE-to-STFDE}
Assume $m\geq 2$, $0<\beta\leq\alpha_{m}<\cdots<\alpha_1\leq 2$, $0<\alpha_1-\alpha_m\leq 1$, $a_j>0$ ($j=1,2,\cdots, m$).
Let $\varphi(\cdot)$ be the inverse function of
$$
\lambda=\lambda(\varphi) = \left(\sum\limits_{j=1}^{m} a_j\varphi^{\alpha_j}    \right)^{1/\beta}, \quad \varphi>0.
$$
Moreover, suppose that the multi-term TFDE
\begin{equation}\label{eq:MTFDE-thm}
\begin{aligned}
\begin{cases}
    \sum\limits_{j=1}^m a_j D_t^{\alpha_j}u(t)=Au(t), \quad\quad t>0;\\
     u(0)=\psi, \quad u'(0)=0 \ (\text{if } \alpha_1>1)
\end{cases}
\end{aligned}
\end{equation}
admits a bounded strong solution $u(t)$, and that the single-term TFDE
\begin{equation}\label{eq:STFDE-thm}
\begin{aligned}
\begin{cases}
    D_t^\beta v(t)=Av(t),\quad\quad t>0; \\
    v(0) = \psi, \quad v'(0)=0\ (\text{if }\beta>1)
\end{cases}
\end{aligned}
\end{equation}
admits an exponentially bounded strong solution $v(t)$. Then $v(t)$ can be represented as
\begin{equation}\label{eq:subordination-MTFDE}
v(t)=\int_0^\infty p(s,t)u(s)ds,\quad t>0,
\end{equation}
where $p(s,t)$ ($s>0,t>0$) is a probability density with respect to $s$ for each $t>0$, and satisfies
\begin{equation}\label{eq:p-Laplace}
\int_0^\infty e^{-\lambda t}p(s,t)dt=\frac{\varphi(\lambda)}{\lambda}e^{-s \varphi(\lambda)}.
\end{equation}
\end{thm}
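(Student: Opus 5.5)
The plan is to apply the subordination principle, Proposition \ref{prop:Pruss-subordination}, with the multi-term equation playing the role of the $a$-equation and the single-term equation the role of the $b$-equation. The Laplace exponent is the inverse function $\varphi$ from the statement.

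First I would rewrite both problems as Volterra equations of the form \eqref{eq:Volterra}. Taking Laplace transforms with \eqref{eq:Caputo-Laplace}, the initial conditions $u'(0)=0$ and $v'(0)=0$ kill the extra terms. This gives
\[
\widehat{u}(\lambda)=\frac{\sum_j a_j\lambda^{\alpha_j-1}}{\sum_j a_j\lambda^{\alpha_j}-A}\,\psi ,\qquad \widehat v(\lambda)=\frac{\lambda^{\beta-1}}{\lambda^\beta-A}\,\psi .
\]
Equivalently, the two equations are the Volterra equations with kernels
\[
\widehat a(\lambda)=\Big(\sum_{j=1}^m a_j\lambda^{\alpha_j}\Big)^{-1},\qquad \widehat b(\lambda)=\lambda^{-\beta}.
\]
Here $a$ is a genuine $L^1_{loc}$ kernel: $1/\sum_j a_j\lambda^{\alpha_j}$ is the Laplace transform of a nonnegative locally integrable function, being $\lambda^{-\alpha_m}$ times a bounded function of the form $1/(a_m+\cdots)$. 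I can also just work directly at the level of Laplace transforms instead.

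Next comes the key identity $\widehat a(\varphi(\lambda))=\widehat b(\lambda)$. It holds by the very definition of $\varphi$, since $\sum_j a_j\varphi(\lambda)^{\alpha_j}=\lambda^\beta$. Corollary \ref{cor:relax-beta} gives exactly that $\varphi$ is $\mathcal{BF}$ with $\varphi(0+)=0$ under the hypotheses $0<\beta\le\alpha_m$ and $\alpha_1-\alpha_m\le1$. So $\varphi$ is the Laplace exponent of a subordinator.

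I would then verify the representation concretely by Laplace transform. Set $w(t)=\int_0^\infty u(s)P_\varphi(ds,t)$, which is bounded because $u$ is bounded. By Proposition \ref{prop:inverse-subordinator-Laplace} (applied componentwise via functionals),
\[
\widehat w(\lambda)=\frac{\varphi(\lambda)}{\lambda}\,\widehat u(\varphi(\lambda)).
\]
Substituting the expression for $\widehat u$ and the identity $\sum_j a_j\varphi^{\alpha_j}=\lambda^\beta$, the numerator becomes $\varphi^{-1}\lambda^\beta$. Hence
\[
\widehat w(\lambda)=\frac{\lambda^{\beta-1}}{\lambda^\beta-A}\,\psi=\widehat v(\lambda)
\]
for large $\lambda$. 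Here I need $\lambda^\beta\in\rho(A)$ for large $\lambda$ and the resolvent identity coming from the strong-solution equation; both follow from applying the transform to the equation and injectivity arguments. Uniqueness of the Laplace transform together with continuity then gives $w=v$.

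The remaining step, which I expect to be the main technical point, is to show that $P_\varphi(ds,t)$ has a Lebesgue density $p(s,t)$ for every $t>0$. By Remark \ref{rem:density-allt}, it suffices to check that $\varphi$ is driftless with $\varphi(\infty)=\infty$. For $\varphi(\infty)=\infty$: $\lambda\to\infty$ forces $\varphi\to\infty$, since the map $\varphi\mapsto\lambda(\varphi)$ is a bijection. For driftlessness: as $\varphi\to\infty$ we have $\lambda\sim a_1^{1/\beta}\varphi^{\alpha_1/\beta}$, so $\varphi(\lambda)\asymp\lambda^{\beta/\alpha_1}$. Since $\beta<\alpha_1$ (because $m\ge2$), this gives $\varphi(\lambda)/\lambda\to0$, i.e., zero drift by Proposition \ref{prop:LK-representation}. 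A driftless Bernstein function tending to infinity has infinite Lévy measure. Then $p(s,t)$ exists, is a probability density in $s$, and satisfies \eqref{eq:p-Laplace} by Proposition \ref{prop:inverse-subordinator-Laplace}, which completes the argument.
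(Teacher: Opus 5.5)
Your proposal is correct and follows essentially the same route as the paper's proof. The steps match: Corollary \ref{cor:relax-beta} for the Bernstein property; the identity $\sum_j a_j\varphi(\lambda)^{\alpha_j}=\lambda^\beta$ to turn $\frac{\varphi(\lambda)}{\lambda}\widehat{u}(\varphi(\lambda))$ into $\lambda^{\beta-1}(\lambda^\beta-A)^{-1}\psi$; the limit $\varphi(\lambda)/\lambda\to0$ (from $\beta<\alpha_1$) for driftlessness, with Remark \ref{rem:density-allt} for the density; and uniqueness of the Laplace transform plus continuity in $t$.

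The detour through Volterra kernels and Proposition \ref{prop:Pruss-subordination} is unnecessary. Boundedness alone would not show that $1/\sum_j a_j\lambda^{\alpha_j}$ is the transform of an $L^1_{loc}$ kernel; what does it is the complete monotonicity of $1/(a_m+\sum_{j<m}a_j\lambda^{\alpha_j-\alpha_m})$. Since you verify the identity directly at the level of Laplace transforms, as the paper does, this does not affect the argument.
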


\begin{proof}
We divide the proof into three steps.

{\bf Step 1.}
By Corollary \ref{cor:relax-beta}, the inverse function $\varphi=\varphi(\lambda)$ is $\mathcal{BF}$ with
$\varphi(0+)=0$, so there exists a subordinator $\mathbf{D}(t)$ with Laplace exponent $\varphi(\cdot)$
\cite{Applebaum2009,Schilling2012}.
It follows from
\[
\lambda^\beta=\sum\limits_{j=1}^{m} a_j\varphi^{\alpha_j}
\]
that
\[
1=\sum\limits_{j=1}^{m} a_j \varphi^{\alpha_j-\beta}   \cdot \left(\frac{\varphi}{\lambda}\right)^\beta.
\]
Thus
\[
\lim\limits_{\lambda\to\infty} \frac{\varphi(\lambda)}{\lambda}=0,
\]
since $m\geq2$, $a_j>0$, $\alpha_j-\beta\geq0$ and $\alpha_1-\beta>0$. Therefore $\mathbf{D}(t)$ has no drift.
Moreover, $\varphi(+\infty)=+\infty$, and since $\mathbf{D}(t)$ has no drift, the L{\'e}vy measure of $\varphi(\cdot)$
is an infinite measure.

Define the inverse subordinator \cite{Meerschaert2012book}
$$
\mathbf{E}(t)=\inf\{z>0:\mathbf{D} (z)>t\}.
$$
Then, by Remark \ref{rem:density-allt}, $\mathbf{E}(t)$ has a Lebesgue density $p(s,t)$ for every $t>0$,
whose Laplace transform with respect to $t$ is given by \eqref{eq:p-Laplace}; see Proposition
\ref{prop:inverse-subordinator-Laplace}.

{\bf Step 2.}
Taking the Laplace transform on both sides of \eqref{eq:MTFDE-thm} and using \eqref{eq:Caputo-Laplace}, we obtain
$$
\sum\limits_{j=1}^m a_j \left(\varphi^{\alpha_j}\widehat{u}(\varphi)-\varphi^{\alpha_j-1} \psi  \right)=A\widehat{u}(\varphi),
$$
and therefore
\begin{equation}\label{eq:u-hat}
\widehat{u}(\varphi)=\frac{\sum\limits_{j=1}^m a_j \varphi^{\alpha_j}}{\varphi}\left( \sum\limits_{j=1}^m a_j \varphi^{\alpha_j}-A  \right)^{-1} \psi.
\end{equation}

Similarly, taking the Laplace transform on both sides of \eqref{eq:STFDE-thm}, we have
$$
\widehat{v}(\lambda)=\lambda^{\beta-1} \left(\lambda^{\beta} -A  \right)^{-1}\psi.
$$
(Here we use different letters $\varphi>0$ and $\lambda>0$ for the variables of $\widehat{u}$ and $\widehat{v}$.)

{\bf Step 3.}
Consequently,
\begin{eqnarray*}
\widehat{v}(\lambda) &=&\lambda^{\beta-1} \left(\lambda^{\beta} -A  \right)^{-1}\psi\\
&=&  \frac{\sum\limits_{j=1}^m a_j (\varphi(\lambda))^{\alpha_j}}{\lambda}
   \cdot \left[\sum\limits_{j=1}^m a_j (\varphi(\lambda))^{\alpha_j}-A \right]^{-1}  \psi \\
&=&\frac{\varphi(\lambda)}{\lambda} \cdot  \frac{\sum\limits_{j=1}^m a_j (\varphi(\lambda))^{\alpha_j}}{\varphi(\lambda)}
   \cdot \left[\sum\limits_{j=1}^m a_j (\varphi(\lambda))^{\alpha_j}-A \right]^{-1}  \psi    \\
&=&\int_0^\infty \frac{\varphi(\lambda)}{\lambda}e^{-s \varphi(\lambda)}u(s)ds     \qquad (\text{using } \eqref{eq:u-hat})\\
&=&\int_0^\infty \left(  \int_0^\infty e^{-\lambda t} p(s,t)dt  \right)u(s)ds \qquad (\text{using } \eqref{eq:p-Laplace}) \\
&=&\int_0^\infty e^{-\lambda t}\left( \int_0^\infty p(s,t)u(s)ds\right)dt,
\end{eqnarray*}
where the exchange of the order of integration in the last equality is justified by Fubini's theorem,
since $u$ is bounded and $p(\cdot,t)$ is a probability density.
The representation \eqref{eq:subordination-MTFDE} now follows from the uniqueness of the Laplace
transform, which a priori gives the equality for almost every $t>0$, together with the
right-continuity in $t$ of both sides: the right-hand side of \eqref{eq:subordination-MTFDE} equals
$\mathbb{E}[u(\mathbf{E}(t))]$, which is right-continuous in $t$ since $u$ is bounded and continuous
and the sample paths of $\mathbf{E}(\cdot)$ are right-continuous.
\end{proof}

\begin{rem}\label{rem:STFDE-existence}
The existence of the strong solution $v$ assumed in Theorem \ref{thm:MTFDE-to-STFDE} is automatic under
standard generation assumptions on $A$: if $A$ generates a bounded $C_0$-semigroup (for $0<\beta\leq1$) or
a bounded cosine family (for $1<\beta<2$), then \eqref{eq:STFDE-thm} is well-posed with a bounded solution;
see Subsection \ref{subsec:prelim} \cite{Bajlekova2001}.
\end{rem}

\begin{rem}\label{rem:two-directions}
A crucial condition for the subordination from \eqref{eq:STFDE-thm} to
\eqref{eq:MTFDE-thm} is $\beta\geq \alpha_1$ (see \cite{Bajlekova2018}),
while that for the reverse direction established here (from \eqref{eq:MTFDE-thm} to \eqref{eq:STFDE-thm}) is
$\beta\leq \alpha_m$. The two directions are therefore complementary.
\end{rem}

\begin{example}\label{ex:telegraph-diffusion}
Consider
\begin{equation*}
\begin{aligned}
\begin{cases}
    u''(t)+2a u'(t)=Au(t), \quad \quad t>0,\\
    u(0)=u_0, \quad u'(0)=0
\end{cases}
\end{aligned}
\end{equation*}
and
\begin{equation*}
\begin{aligned}
\begin{cases}
    v'(t) = Av(t),\quad\quad t>0, \\
    v(0) = u_0,
\end{cases}
\end{aligned}
\end{equation*}
where $a>0$.
Then we have
$
v(t)=\int_0^\infty p(s,t)u(s)ds
$ ($t>0$),
where $p(s,t)$ is the probability density of an inverse subordinator $\mathbf{E}(t)$ with Laplace exponent
$\varphi(\lambda)=-a+\sqrt{a^2+\lambda}$ (note that $\varphi(\lambda)\sim\lambda^{1/2}$ as
$\lambda\to\infty$, so $\varphi$ is driftless with $\varphi(\infty)=\infty$);
see \cite{Lichenggang2026dual}. This is the case $m=2$, $\alpha_1=2$, $\alpha_2=\beta=1$ of Theorem
\ref{thm:MTFDE-to-STFDE}.
\end{example}

\begin{example}\label{ex:attenuated-wave}
 Let $a,b>0$. Consider the time-fractional attenuated wave equation \cite{Straka2013}
\begin{equation*}
\begin{aligned}
\begin{cases}
    a^2D_t^{2}u(t)+2ab D_t^{1+\alpha}u(t)+b^2 D_t^{2\alpha}u(t) =Au(t),\quad\quad t>0,\ 0<\alpha<1;\\
    u(0) = \psi, \quad u'(0)=0.
\end{cases}
\end{aligned}
\end{equation*}
The orders are $2$, $1+\alpha=\frac{2+2\alpha}{2}$ and $2\alpha$, so Corollary \ref{cor:attenuated} applies with
$(\alpha,\beta)$ replaced by $(2,2\alpha)$: the inverse function $\varphi(\lambda)$ of
$a\varphi+b\varphi^{\alpha}=\lambda^{\alpha}$
is $\mathcal{BF}$ with $\varphi(0+)=0$; moreover, $\varphi(\lambda)\sim\lambda^{\alpha}/a$ as
$\lambda\to\infty$ since $0<\alpha<1$, so $\varphi$ is driftless with $\varphi(\infty)=\infty$.
As a result, arguing as in the proof of Theorem \ref{thm:MTFDE-to-STFDE} with Corollary
\ref{cor:attenuated} in place of Corollary \ref{cor:relax-beta} (so that no gap condition on the
orders $2,1+\alpha,2\alpha$ is needed), the solution of
\begin{equation*}
\begin{aligned}
\begin{cases}
    D_t^{2\alpha}v(t)=Av(t),\quad\quad t>0; \\
    v(0) = \psi, \quad v'(0)=0\ (\text{if }\alpha>1/2)
\end{cases}
\end{aligned}
\end{equation*}
can be represented as
$
v(t)=\int_0^\infty p(s,t)u(s)ds,
$
where $p(s,t)$ is the density of the inverse subordinator with Laplace exponent $\varphi(\cdot)$.
\end{example}

The two complementary directions can be chained together: first from a multi-term equation to a single-term
equation (Theorem \ref{thm:MTFDE-to-STFDE}), and then from this single-term equation to another multi-term
equation (the known direction; see \cite{Bajlekova2018} and item (4) in the introduction).
This yields a subordination representation between two multi-term TFDEs.

\begin{thm}\label{thm:multi-to-multi}
Consider the two multi-term TFDEs
\begin{equation}\label{eq:MTFDE-source}
\sum\limits_{j=1}^m a_j D_t^{\alpha_j}u(t)=Au(t),\quad t>0;
\qquad\quad
\sum\limits_{i=1}^n b_i D_t^{\gamma_i}w(t)=Aw(t),\quad t>0,
\end{equation}
with the initial conditions $u(0)=w(0)=\psi$ {\rm(}$u'(0)=0$ if $\alpha_1>1$, and $w'(0)=0$ if
$\gamma_1>1${\rm)}, where $m,n\geq 2$, $a_j,b_i>0$, and the orders satisfy
\begin{gather*}
0<\alpha_m<\cdots<\alpha_1\leq 2,\quad 0<\alpha_1-\alpha_m\leq 1;\qquad
0<\gamma_n<\cdots<\gamma_1\leq 2,\quad 0<\gamma_1-\gamma_n\leq 1;\\
\gamma_1\leq \alpha_m.
\end{gather*}
Assume that the first equation in \eqref{eq:MTFDE-source} admits a bounded strong solution $u(t)$ and that the
second equation admits a bounded strong solution $w(t)$.
Then $w(t)$ can be represented as
\begin{equation}\label{eq:subordination-multi-multi}
w(t)=\int_0^\infty p_0(s,t)u(s)ds,\quad t>0,
\end{equation}
where
\begin{equation}\label{eq:p0-convolution}
p_0(s,t)=\int_0^\infty p_1(s,\tau)P_2(d\tau,t),
\end{equation}
$p_1(s,t)$ is the probability density of the inverse subordinator with Laplace exponent
$\varphi_1(\cdot)$, and $P_2(ds,t)$ is the distribution of the inverse subordinator with Laplace exponent
$\varphi_2(\cdot)$: for any fixed $\beta\in[\gamma_1,\alpha_m]$,
$\varphi_1(\cdot)$ is the inverse function of
$\lambda=\bigl(\sum_{j=1}^{m} a_j\varphi^{\alpha_j}\bigr)^{1/\beta}$ ($\varphi>0$), and
$\varphi_2(\lambda)=\bigl(\sum_{i=1}^{n} b_i\lambda^{\gamma_i}\bigr)^{1/\beta}$.
Moreover, writing $F(\lambda)=\sum\limits_{j=1}^m a_j\lambda^{\alpha_j}$ and
$G(\lambda)=\sum\limits_{i=1}^n b_i\lambda^{\gamma_i}$, the composition
\begin{equation}\label{eq:composed-exponent}
\varphi_1\circ\varphi_2=F^{-1}\circ G
\end{equation}
is $\mathcal{BF}$ with $(\varphi_1\circ\varphi_2)(0+)=0$, and $p_0(s,t)$ is the probability density of the
inverse subordinator with Laplace exponent $F^{-1}\circ G$:
\begin{equation}\label{eq:p0-Laplace}
\int_0^\infty e^{-\lambda t}p_0(s,t)dt
=\frac{\varphi_1(\varphi_2(\lambda))}{\lambda}\,e^{-s\,\varphi_1(\varphi_2(\lambda))}.
\end{equation}
\end{thm}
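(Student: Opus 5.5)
The plan is to obtain the kernel $p_0$ by chaining Theorem \ref{thm:MTFDE-to-STFDE} with the known single-term-to-multi-term subordination through an intermediate single-term equation.

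First, fix $\beta\in[\gamma_1,\alpha_m]$ and use the single-term equation $D_t^\beta v=Av$, $v(0)=\psi$ (with $v'(0)=0$ if $\beta>1$) as a bridge. Corollary \ref{cor:relax-beta} shows that $\varphi_1$ is $\mathcal{BF}$ with $\varphi_1(0+)=0$. Since $\gamma_i\le\gamma_1\le\beta$, the function $\varphi_2(\lambda)=G(\lambda)^{1/\beta}$ is also $\mathcal{BF}$ with $\varphi_2(0+)=0$; this is item (4) of the introduction, based on \cite{Bajlekova2018} and using $\gamma_1-\gamma_n\le1$.

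The composition identity \eqref{eq:composed-exponent} is direct: $\varphi_1=F^{-1}\circ(\lambda\mapsto\lambda^\beta)$ and $\varphi_2=(G)^{1/\beta}$, so $\varphi_1\circ\varphi_2=F^{-1}\circ G$. This composition is $\mathcal{BF}$ by Proposition \ref{prop:closure} (ii), and $(F^{-1}\circ G)(0+)=0$.

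Rather than proving existence of $v$ separately, I would work entirely at the Laplace level, which avoids needing $v$.

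\begin{itemize}
\item From \eqref{eq:u-hat}, $\widehat u(\mu)=\frac{F(\mu)}{\mu}(F(\mu)-A)^{-1}\psi$.
\item Similarly, $\widehat w(\lambda)=\frac{G(\lambda)}{\lambda}(G(\lambda)-A)^{-1}\psi$.
\item Set $\Phi=F^{-1}\circ G$. Then $F(\Phi(\lambda))=G(\lambda)$, so
\[
\widehat w(\lambda)=\frac{\Phi(\lambda)}{\lambda}\,\widehat u(\Phi(\lambda))=\int_0^\infty \frac{\Phi(\lambda)}{\lambda}e^{-s\Phi(\lambda)}u(s)\,ds.
\]
\end{itemize}

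This is exactly Proposition \ref{prop:Pruss-subordination} with exponent $\Phi$. Hence $w(t)=\mathbb E[u(\mathbf E_\Phi(t))]$ for a.e.\ $t$, and for every $t$ by right-continuity, as in Step 3 of Theorem \ref{thm:MTFDE-to-STFDE}.

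It remains to identify $\mathbf E_\Phi$ with the mixture \eqref{eq:p0-convolution}. Define $p_0$ by \eqref{eq:p0-convolution}, with $p_1$ the density from Theorem \ref{thm:MTFDE-to-STFDE}. Taking Laplace transforms in $t$ and using Proposition \ref{prop:inverse-subordinator-Laplace} with $F(\tau)=p_1(s,\tau)$ (after truncation to handle boundedness, or via monotone convergence) gives
\[
\int_0^\infty e^{-\lambda t}p_0(s,t)\,dt=\frac{\varphi_2(\lambda)}{\lambda}\int_0^\infty p_1(s,\tau)e^{-\tau\varphi_2(\lambda)}\,d\tau=\frac{\varphi_2(\lambda)}{\lambda}\cdot\frac{\varphi_1(\varphi_2(\lambda))}{\varphi_2(\lambda)}e^{-s\varphi_1(\varphi_2(\lambda))}.
\]
This is \eqref{eq:p0-Laplace}. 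Combined with Proposition \ref{prop:inverse-subordinator-Laplace} for $\Phi$ and uniqueness of Laplace transforms (tested against $e^{-\mu s}$ in $s$), $p_0(\cdot,t)$ is the law of $\mathbf E_\Phi(t)$ for a.e.\ $t$.

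The main obstacle is upgrading this to a genuine density for every $t$, because $\varphi_2$ may have drift when $\beta=\gamma_1$. My approach is to check that $\Phi$ is driftless with $\Phi(\infty)=\infty$, and then invoke Remark \ref{rem:density-allt}.

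\begin{itemize}
\item \emph{Growth:} as $\lambda\to\infty$, $\Phi(\lambda)\sim (b_1/a_1)^{1/\alpha_1}\lambda^{\gamma_1/\alpha_1}$.
\item \emph{Driftless:} this growth is $o(\lambda)$ unless $\gamma_1=\alpha_1$. That is impossible, since $\gamma_1\le\alpha_m<\alpha_1$.
\item \emph{Infinite L\'evy measure:} $\Phi(\infty)=\infty$, so the measure is infinite.
\end{itemize}

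Finally, I would show that $p_0$ agrees with the density of $\mathbf E_\Phi(t)$ for every $t$. Both sides are right-continuous in $t$ as distributions, since the mixture is $\mathbf E_1(\mathbf E_2(t))$ for independent processes. This yields \eqref{eq:subordination-multi-multi} for all $t>0$.
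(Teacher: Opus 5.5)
Your proposal is correct and follows essentially the same route as the paper: the Bernstein property of $\varphi_1$ (Corollary \ref{cor:relax-beta}) and of $\varphi_2$ (item (4)), closure under composition, the identity $F(\varphi_1(\varphi_2(\lambda)))=G(\lambda)$, identification of $p_0$ through its Laplace transform in $t$, and the matching $\widehat w(\lambda)=\frac{\Phi(\lambda)}{\lambda}\widehat u(\Phi(\lambda))$ closed by uniqueness of the Laplace transform and right-continuity. The differences are minor. You check driftlessness of $\Phi$ directly from $\Phi(\lambda)\sim(b_1/a_1)^{1/\alpha_1}\lambda^{\gamma_1/\alpha_1}$, whereas the paper inherits it from the driftless $\varphi_1$. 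You also spell out more carefully than the paper how $p_0(\cdot,t)$ is identified with the law of $\mathbf{E}_\Phi(t)$ for every $t$, not just almost every $t$.
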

\begin{proof}
Fix any $\beta\in[\gamma_1,\alpha_m]$; it serves only as an analytic device for the decomposition
\eqref{eq:p0-convolution}. Since $\beta\leq\alpha_m$, Corollary
\ref{cor:relax-beta} implies that $\varphi_1$ is $\mathcal{BF}$ with $\varphi_1(0+)=0$; since
$\beta\geq\gamma_1$, the function $\varphi_2$ is $\mathcal{BF}$ with $\varphi_2(0+)=0$
(see \cite{Bajlekova2018} and item (4) in the introduction). As the composition of two Bernstein
functions, $\varphi_1\circ\varphi_2$ is $\mathcal{BF}$ (Proposition \ref{prop:closure} (ii)), and clearly
$(\varphi_1\circ\varphi_2)(0+)=0$. Moreover, $F(\varphi_1(\mu))=\mu^\beta$ by the definition of
$\varphi_1$, and $\varphi_2(\lambda)^\beta=G(\lambda)$ by the definition of $\varphi_2$; hence
$F(\varphi_1(\varphi_2(\lambda)))=G(\lambda)$, that is, \eqref{eq:composed-exponent} holds.
Since $\varphi_1$ has no drift (see Step 1 of the proof of Theorem \ref{thm:MTFDE-to-STFDE}), neither does
$\varphi_1\circ\varphi_2$, and $\varphi_1(\varphi_2(+\infty))=+\infty$; therefore, by Remark
\ref{rem:density-allt}, the inverse subordinator with Laplace exponent $F^{-1}\circ G$ admits a
probability density for every $t>0$, which satisfies \eqref{eq:p0-Laplace} by Proposition
\ref{prop:inverse-subordinator-Laplace}. This density coincides with
$p_0(s,t)$ defined by \eqref{eq:p0-convolution}: taking the Laplace transform in $t$,
using the identity
$\int_0^\infty e^{-\lambda t}P_2(d\tau,t)dt=\frac{\varphi_2(\lambda)}{\lambda}e^{-\tau\varphi_2(\lambda)}$
and then \eqref{eq:p-Laplace} for $p_1$, we get
\begin{align*}
\int_0^\infty e^{-\lambda t}p_0(s,t)dt
&=\int_0^\infty \frac{\varphi_2(\lambda)}{\lambda}e^{-\tau\varphi_2(\lambda)}p_1(s,\tau)d\tau
=\frac{\varphi_1(\varphi_2(\lambda))}{\lambda}\,e^{-s\,\varphi_1(\varphi_2(\lambda))},
\end{align*}
which is \eqref{eq:p0-Laplace}.

It remains to verify \eqref{eq:subordination-multi-multi}. Taking the Laplace transform of both equations
in \eqref{eq:MTFDE-source} (via \eqref{eq:Caputo-Laplace} and the initial conditions), we obtain
$\widehat{u}(\mu)=\frac{F(\mu)}{\mu}\bigl[F(\mu)-A\bigr]^{-1}\psi$ and
$\widehat{w}(\lambda)=\frac{G(\lambda)}{\lambda}\bigl[G(\lambda)-A\bigr]^{-1}\psi$.
Taking the Laplace transform in $t$ of the right-hand side of \eqref{eq:subordination-multi-multi} and
using \eqref{eq:p0-Laplace} and Fubini's theorem, we obtain
\begin{align*}
\int_0^\infty e^{-\lambda t}\left(\int_0^\infty p_0(s,t)u(s)ds\right)dt
&=\int_0^\infty \frac{\varphi_1(\varphi_2(\lambda))}{\lambda}\,
  e^{-s\,\varphi_1(\varphi_2(\lambda))}u(s)\,ds\\
&=\frac{\varphi_1(\varphi_2(\lambda))}{\lambda}\,\widehat{u}\bigl(\varphi_1(\varphi_2(\lambda))\bigr)
=\frac{G(\lambda)}{\lambda}\bigl[G(\lambda)-A\bigr]^{-1}\psi
=\widehat{w}(\lambda),
\end{align*}
where the third equality uses the above formula for $\widehat{u}$ together with \eqref{eq:composed-exponent}.
The representation \eqref{eq:subordination-multi-multi} now follows from the uniqueness of the Laplace
transform and the right-continuity in $t$ of both sides, as in the proof of Theorem
\ref{thm:MTFDE-to-STFDE}.
\end{proof}

\begin{rem}\label{rem:multi-multi-existence}
If $A$ generates a bounded $C_0$-semigroup (when all the orders involved are at most $1$) or a bounded
cosine family (in general), then both equations in \eqref{eq:MTFDE-source} are well-posed with bounded
strong solutions, so the existence of $u$ and $w$ assumed in Theorem \ref{thm:multi-to-multi} is
automatic: the corresponding single-term equations are well-posed by \cite{Bajlekova2001}, and the
well-posedness of the two multi-term equations then follows by \cite{Lichenggang2012}.
\end{rem}

\begin{rem}\label{rem:p2-density}
The intermediate distribution $P_2(ds,t)$ has a Lebesgue density $p_2(s,t)$ for every $t>0$ whenever
$\beta>\gamma_1$: indeed, in this case $\varphi_2(\lambda)\sim b_1^{1/\beta}\lambda^{\gamma_1/\beta}$ with
$\gamma_1/\beta<1$, so $\varphi_2$ is driftless and Remark \ref{rem:density-allt} applies.
The boundary case $\beta=\gamma_1$ (which is enforced when $\gamma_1=\alpha_m$) is more delicate: then
\[
\varphi_2(\lambda)=b_1^{1/\gamma_1}\lambda\Big(1+\sum\limits_{i=2}^n
\frac{b_i}{b_1}\lambda^{\gamma_i-\gamma_1}\Big)^{1/\gamma_1}
\]
has the positive drift $b_1^{1/\gamma_1}=\lim_{\lambda\to\infty}\varphi_2(\lambda)/\lambda>0$. Since
$(1+u)^{1/\gamma_1}-1\geq \frac{1}{2\gamma_1}u$ for all sufficiently small $u>0$, we have
\[
\sigma_2(0,\infty)=\lim_{\lambda\to\infty}\big(\varphi_2(\lambda)-b_1^{1/\gamma_1}\lambda\big)
\geq \frac{b_1^{1/\gamma_1-1}}{2\gamma_1}\lim_{\lambda\to\infty}\sum\limits_{i=2}^n
b_i\lambda^{1-(\gamma_1-\gamma_i)},
\]
where $\sigma_2$ denotes the L{\'e}vy measure of $\varphi_2$. Hence $\sigma_2$ is infinite if
$\gamma_1-\gamma_2<1$, in which case $\mathbf{E}_{\varphi_2}(t)$ has no atom (cf.\
\cite[Example 3.4]{Meerschaert2008} for an explicit density formula in the stable case). On the other hand, if $\gamma_1-\gamma_2=1$, then necessarily $n=2$ (because
$\gamma_1-\gamma_n\leq1$), and
$\sigma_2(0,\infty)=b_1^{1/\gamma_1-1}b_2/\gamma_1<\infty$: the jump part of $\mathbf{D}_{\varphi_2}$ is
then a compound Poisson process, and on the event that it has no jump on $[0,b_1^{-1/\gamma_1}t]$ -- an
event of probability $\exp\big(-\frac{b_2}{b_1\gamma_1}t\big)>0$ -- one has
$\mathbf{E}_{\varphi_2}(t)=b_1^{-1/\gamma_1}t$. Thus $\mathbf{E}_{\varphi_2}(t)$ has an atom at
$s=b_1^{-1/\gamma_1}t$ and admits no Lebesgue density. In all cases, however, the composed exponent
$\varphi_1\circ\varphi_2=F^{-1}\circ G$ is driftless with an infinite L{\'e}vy measure, so the final
kernel $p_0(s,t)$ in \eqref{eq:p0-convolution} is a genuine probability density for every $t>0$.
\end{rem}

\begin{rem}\label{rem:multi-multi-stochastic}
Although an intermediate order $\beta\in[\gamma_1,\alpha_m]$ is used in the construction, the resulting
kernel $p_0(s,t)$ does not depend on the choice of $\beta$, since its Laplace exponent
$F^{-1}\circ G$ involves only the characteristics of the two multi-term equations.
Moreover, \eqref{eq:p0-convolution} admits a clear probabilistic meaning: if $\mathbf{D}_1(t)$ and
$\mathbf{D}_2(t)$ are independent subordinators with Laplace exponents $\varphi_1$ and $\varphi_2$, and
$\mathbf{E}_1(t)$, $\mathbf{E}_2(t)$ are the corresponding inverse subordinators, then $p_0(s,t)$ is the
probability density of the composed process $\mathbf{E}_1(\mathbf{E}_2(t))$; equivalently,
$$
\mathbf{E}_{F^{-1}\circ G}(t)\ \overset{d}{=}\ \mathbf{E}_1(\mathbf{E}_2(t)),
$$
where $\mathbf{E}_{F^{-1}\circ G}(t)$ denotes the inverse subordinator with Laplace exponent
$F^{-1}\circ G$. Thus the time change relating the two multi-term equations decomposes into two
consecutive ones: first by $\mathbf{E}_1$ (multi-term to single-term), then by
$\mathbf{E}_2$ (single-term to multi-term).
\end{rem}

\begin{rem}\label{rem:order-gap}
The condition $\gamma_1\leq\alpha_m$ in Theorem \ref{thm:multi-to-multi} is a convenient sufficient
condition which guarantees the Bernstein property of $F^{-1}\circ G$ through the two-step construction,
but it is not necessary: by Proposition \ref{prop:shared-outer}, $F^{-1}\circ G$ can be $\mathcal{BF}$
even when $\gamma_1>\alpha_m$, provided the two characteristics share a common outer symbol.
This happens, for instance, for the telegraph-type equations in Example \ref{ex:fractional-telegraph},
where $F^{-1}\circ G(\lambda)=\lambda^{\beta/\alpha}$ is $\mathcal{BF}$ for all $0<\beta<\alpha$,
including the range $2\beta>\alpha$.
\end{rem}

\begin{example}\label{ex:telegraph-to-fractional-telegraph}
Take $m=n=2$ with $(\alpha_1,\alpha_2)=(2,1)$, $(a_1,a_2)=(1,2a)$, $(\gamma_1,\gamma_2)=(1,1/2)$,
$(b_1,b_2)=(1,2a)$, where $a>0$, and choose $\beta=1$. Then $\gamma_1=1=\alpha_m$, the conditions of
Theorem \ref{thm:multi-to-multi} are satisfied, and
$$
\varphi_1(\lambda)=-a+\sqrt{a^2+\lambda},\qquad \varphi_2(\lambda)=\lambda+2a\lambda^{1/2}.
$$
Note that $\varphi_2$ has the positive drift $1$, but its L{\'e}vy measure is infinite (this is the case
$\gamma_1-\gamma_2=\frac12<1$ of Remark \ref{rem:p2-density}), so the corresponding inverse subordinator
has no atom; an explicit density for such stable subordinators with drift is given in
\cite[Example 3.4]{Meerschaert2008}.
A direct computation gives
$$
\varphi_1(\varphi_2(\lambda))=-a+\sqrt{a^2+\lambda+2a\lambda^{1/2}}=\lambda^{1/2},
$$
so $F^{-1}\circ G=\lambda^{1/2}$ and $p_0(s,t)$ is the density of the inverse $1/2$-stable subordinator:
$$
p_0(s,t)=\frac{1}{\sqrt{\pi t}}\,e^{-\frac{s^2}{4t}},
$$
which coincides with the density of $|\mathbf{B}(t)|$ for a one-dimensional Brownian motion $\mathbf{B}(t)$
with generator $\partial_x^2$.

Now take $A=\partial_x^2$ on $\mathbb{R}$. Then the first equation in \eqref{eq:MTFDE-source} is the
telegraph equation $\partial_t^2 u+2a\partial_t u=\partial_x^2 u$, and the second one is the
time-fractional telegraph equation
$$
\partial_t w+2a D_t^{1/2}w=\partial_x^2 w.
$$
It is known \cite{Orsingher2004,Orsingher2009AOP} that the fundamental solution of the latter equation is
the density of $\mathbf{H}(|\mathbf{B}(t)|)$, where
$\mathbf{H}(t)=\mathbf{V}(0)\int_0^t(-1)^{\mathbf{N}(s)}ds$ is the telegraph process, $\mathbf{N}(t)$ is a
Poisson process with rate $a$, and the two-valued random variable $\mathbf{V}(0)=\pm1$ (each with
probability $1/2$) is independent of $\mathbf{N}(t)$ and $\mathbf{B}(t)$.
Theorem \ref{thm:multi-to-multi} and Remark \ref{rem:multi-multi-stochastic} refine this known identity by
decomposing the time change $|\mathbf{B}(t)|$ into two consecutive inverse subordinators:
$$
\mathbf{H}(|\mathbf{B}(t)|)\ \overset{d}{=}\ \mathbf{H}(\mathbf{E}_1(\mathbf{E}_2(t))).
$$
In flowchart terms, the subordination from the telegraph equation to the time-fractional telegraph equation
factors through the heat equation $\partial_t v=\partial_x^2 v$, and each step carries its own stochastic
time change. See also \cite{Lichenggang2026dual} for a related discussion from the dual-diffusion point of
view.
\end{example}

\section{General nonlocal-in-time equations}\label{sec:general}

\subsection{Equations with kernels of type $(\mathcal{PC})$}\label{subsec:PC}

In this subsection we consider the subordination between two abstract nonlocal-in-time equations with kernels of
type $(\mathcal{PC})$ (Definition \ref{def:PC-kernel}), which include the Caputo fractional derivatives of orders in
$(0,1)$ and their tempered counterparts as special cases.

\begin{thm}\label{thm:PC-subordination}
Let $A$ be the generator of a bounded $C_0$-semigroup.
Consider
\begin{equation}\label{eq:PC-u}
\begin{cases}
D_t(k\ast (u(\cdot)-u_0))(t)=Au(t), \qquad t>0; \\
u(0)=u_0
\end{cases}
\end{equation}
and
\begin{equation}\label{eq:PC-v}
\begin{cases}
D_t(l\ast (v(\cdot)-u_0))(t)=Av(t), \qquad t>0; \\
v(0)=u_0,
\end{cases}
\end{equation}
where both $k(t),l(t)$ are functions of type $(\mathcal{PC})$. Suppose that both problems admit bounded strong solutions and
that there exists a Laplace exponent $\varphi(\cdot)$ such that
\begin{equation}\label{eq:PC-condition}
\varphi(\lambda)\widehat{k}(\varphi(\lambda))=\lambda \widehat{l}(\lambda), \qquad (\lambda>0).
\end{equation}
Then there exists an inverse subordinator $\mathbf{E}_\varphi(t)$ with Laplace exponent $\varphi(\cdot)$ such that
\begin{equation}\label{eq:PC-subordination}
v(t)=\int_0^\infty u(s)P(ds,t), \qquad (t>0),
\end{equation}
where $P(s,t)$ is the distribution of $\mathbf{E}_\varphi(t)$ with
\begin{equation}\label{eq:P-Laplace}
\int_0^\infty e^{-\lambda t}P(ds,t)dt=\frac{\varphi(\lambda)}{\lambda}e^{-s\varphi(\lambda)}ds.
\end{equation}
\end{thm}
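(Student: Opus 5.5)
The plan is to follow the same Laplace-transform verification used in the proof of Theorem \ref{thm:MTFDE-to-STFDE}, and to reduce the abstract setting to Proposition \ref{prop:Pruss-subordination} wherever possible.

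\textbf{Step 1: Laplace transforms of the two solutions.} Since $u$ and $v$ are bounded strong solutions, we may transform both equations, as described in the preliminaries. For \eqref{eq:PC-u} this gives, for $\mu>0$,
\[
\mu\widehat{k}(\mu)\bigl(\widehat{u}(\mu)-u_0/\mu\bigr)=A\widehat{u}(\mu).
\]
Because $A$ generates a bounded $C_0$-semigroup, every positive real number lies in $\rho(A)$. Also $\mu\widehat{k}(\mu)>0$, since $k$ is nonnegative and not identically zero (because $k\ast k_1=1$). Hence
\[
\widehat{u}(\mu)=\widehat{k}(\mu)\bigl(\mu\widehat{k}(\mu)-A\bigr)^{-1}u_0
=\frac{\mu\widehat{k}(\mu)}{\mu}\bigl(\mu\widehat{k}(\mu)-A\bigr)^{-1}u_0 .
\]
In the same way,
\[
\widehat{v}(\lambda)=\frac{\lambda\widehat{l}(\lambda)}{\lambda}\bigl(\lambda\widehat{l}(\lambda)-A\bigr)^{-1}u_0 .
\]
Equivalently, both problems are Volterra equations \eqref{eq:Volterra} with kernels $a=k_1$ and $b=l_1$, where $l_1$ is the Sonine partner of $l$. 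Indeed, $\widehat{k_1}(\mu)=1/(\mu\widehat{k}(\mu))$, so condition \eqref{eq:PC-condition} reads $\widehat{k_1}(\varphi(\lambda))=\widehat{l_1}(\lambda)$. This is exactly the hypothesis of Proposition \ref{prop:Pruss-subordination}.

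\textbf{Step 2: the inverse subordinator.} Since $\varphi$ is a Laplace exponent, Remark \ref{rem:BF-Laplace-exponent} provides the subordinator $\mathbf{D}_\varphi$. We then define $\mathbf{E}_\varphi$ by \eqref{eq:inverse-subordinator}. Applying Proposition \ref{prop:inverse-subordinator-Laplace} to functions of the form $F=\mathbf 1_{[0,s]}$ gives \eqref{eq:P-Laplace}.

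\textbf{Step 3: verification.} Substitute $\mu=\varphi(\lambda)$ and use \eqref{eq:PC-condition}:
\[
\widehat{v}(\lambda)=\frac{\varphi(\lambda)}{\lambda}\cdot\frac{\varphi(\lambda)\widehat{k}(\varphi(\lambda))}{\varphi(\lambda)}\bigl(\varphi(\lambda)\widehat{k}(\varphi(\lambda))-A\bigr)^{-1}u_0
=\frac{\varphi(\lambda)}{\lambda}\widehat{u}(\varphi(\lambda))
=\int_0^\infty\frac{\varphi(\lambda)}{\lambda}e^{-s\varphi(\lambda)}u(s)\,ds .
\]
This requires $\varphi(\lambda)>0$. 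That holds unless $\varphi\equiv0$, and $\varphi\equiv0$ would contradict \eqref{eq:PC-condition} because $\lambda\widehat{l}(\lambda)>0$.

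By Proposition \ref{prop:inverse-subordinator-Laplace}, the right-hand side equals the Laplace transform of $t\mapsto\mathbb{E}[u(\mathbf{E}_\varphi(t))]=\int_0^\infty u(s)P(ds,t)$. To pass to $X$-valued $u$, we apply Proposition \ref{prop:inverse-subordinator-Laplace} to $F=\langle x^*,u(\cdot)\rangle$ for each $x^*\in X^*$; this $F$ is bounded and continuous. Fubini's theorem is justified because $u$ is bounded and $P(\cdot,t)$ is a probability measure. Uniqueness of the Laplace transform then gives \eqref{eq:PC-subordination} for almost every $t$.

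\textbf{Main obstacle.} The delicate part is upgrading the identity from almost every $t$ to every $t>0$. Here $\mathbf{E}_\varphi$ need not have a density, since no drift or infinite-measure assumption is made. I would argue as in the proof of Theorem \ref{thm:MTFDE-to-STFDE}. First, $v$ is continuous because it is a strong solution. Second, $t\mapsto\mathbb{E}[u(\mathbf{E}_\varphi(t))]$ is right-continuous: $\mathbf{E}_\varphi$ has continuous nondecreasing paths, and $u$ is bounded and continuous, so dominated convergence applies. Two functions that agree almost everywhere and are both right-continuous agree everywhere.
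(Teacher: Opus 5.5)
Your proposal is correct and follows essentially the same route as the paper: you transform both equations, substitute $\mu=\varphi(\lambda)$ and use \eqref{eq:PC-condition} to identify $\widehat{v}(\lambda)$ with $\frac{\varphi(\lambda)}{\lambda}\widehat{u}(\varphi(\lambda))$, recognize this via Fubini as the Laplace transform of $\int_0^\infty u(s)P(ds,t)$, and conclude by uniqueness of the Laplace transform plus right-continuity in $t$. One small correction: your assertion that $\mathbf{E}_\varphi$ has continuous paths is not justified as stated, since continuity requires $\mathbf{D}_\varphi$ to be strictly increasing, which fails e.g.\ for compound Poisson exponents; however, your dominated-convergence step only needs right-continuity of the paths, which always holds, and that is exactly what the paper invokes.
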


\begin{proof}
Since
\[
\begin{cases}
k\ast (u(\cdot)-u_0)(t)\mapsto \widehat{k}(\varphi)   \left(\widehat{u}(\varphi)-\frac{1}{\varphi}u_0 \right) ,   \\
(u(\cdot)-u_0)(0)=0,
\end{cases}
\]
taking the Laplace transform on both sides of \eqref{eq:PC-u} leads to
\[
\varphi\left[ \widehat{k} (\varphi)    \left(\widehat{u}(\varphi)-\frac{1}{\varphi}u_0 \right) \right]-0=A\widehat{u}(\varphi).
\]
It follows that
\begin{equation}\label{eq:u-hat-PC}
\widehat{u}(\varphi)=\widehat{k} (\varphi) \left[\varphi\widehat{k} (\varphi)-A\right]^{-1}u_0.
\end{equation}

Similarly, we have
\begin{equation}\label{eq:v-hat-PC}
\widehat{v}(\lambda)=\widehat{l} (\lambda) \left[\lambda\widehat{l} (\lambda)-A\right]^{-1}u_0.
\end{equation}
(Here we use different letters $\varphi>0$ and $\lambda>0$ for the variables of $\widehat{u}$ and $\widehat{v}$.)

Given the Laplace exponent $\varphi(\cdot)$, there exists an inverse subordinator $\mathbf{E}_\varphi(t)$ with
distribution $P(s,t)$ satisfying \eqref{eq:P-Laplace}.

Next we compute the Laplace transform of $\int_0^\infty u(s)P(ds,t)$:
\begin{align*}
&\int_0^\infty e^{-\lambda t}\left(\int_0^\infty u(s)P(ds,t)\right)\,dt \\
=&\int_0^\infty  u(s)\left( \int_0^\infty e^{-\lambda t}P(ds,t)\,dt\right) \\
=&\int_0^\infty \frac{\varphi(\lambda)}{\lambda}e^{-s\varphi(\lambda)} u(s)\,ds \\
=&\frac{\varphi(\lambda)}{\lambda}\cdot \widehat{k} (\varphi) \left[\varphi\widehat{k} (\varphi)-A\right]^{-1}u_0\\
=&\frac{\lambda\widehat{l} (\lambda)}{\lambda}\cdot \left[\lambda\widehat{l} (\lambda)-A\right]^{-1}u_0\\
=&\widehat{v}(\lambda),
\end{align*}
where we used \eqref{eq:u-hat-PC}, \eqref{eq:PC-condition} and \eqref{eq:v-hat-PC} successively.
The result follows from the uniqueness of the Laplace transform and the right-continuity in $t$ of
both sides, as in the proof of Theorem \ref{thm:MTFDE-to-STFDE}.
\end{proof}

\begin{example}\label{ex:stable-to-stable}
Let $k(t)=g_{1-\alpha}(t)$, $l(t)=g_{1-\beta}(t)$, where $0<\beta<\alpha\leq 1$. Then
\[
\widehat{k}(\lambda)=\lambda^{\alpha-1}, \qquad \widehat{l}(\lambda)=\lambda^{\beta-1}.
\]
Taking $\varphi(\lambda)=\lambda^{\beta/\alpha}$, we have
\[
\varphi(\lambda)\widehat{k}(\varphi(\lambda))=\varphi(\lambda)\cdot \varphi^{\alpha-1}(\lambda)
=\varphi^\alpha(\lambda)=\lambda^{\beta}=\lambda\cdot \lambda^{\beta-1}=\lambda \widehat{l}(\lambda).
\]
Theorem \ref{thm:PC-subordination} then gives
\begin{equation*}
v(t)=\int_0^\infty p(s,t)u(s)\,ds, \qquad (t>0),
\end{equation*}
with $p(s,t)$ the density of the inverse of the $\beta/\alpha$-stable subordinator.
This recovers the classical subordination for fractional diffusion equations \cite{Bajlekova2001,Limiao2010}.
\end{example}

\begin{example}\label{ex:tempered-to-stable}
Let $a>0$, $0<\beta<\alpha\leq 1$, $l(t)=g_{1-\beta}(t)$, and let $k(t)$ be such that
\[
\widehat{k}(\lambda)=\frac{(\lambda+a)^\alpha-a^\alpha}{\lambda}.
\]
Then $k(t)$ is a function of type $(\mathcal{PC})$: indeed, $(\lambda+a)^\alpha-a^\alpha$ is the Laplace exponent of a
tempered $\alpha$-stable subordinator, hence a $\mathcal{BF}$ vanishing at the origin, and
$\widehat{k_1}(\lambda)=\frac{1}{\lambda\widehat{k}(\lambda)}$ defines the corresponding kernel $k_1$.

Let
\[
\varphi(\lambda)=\left(\lambda^\beta+a^\alpha\right)^{1/\alpha}-a.
\]
Then
\[
\varphi(\lambda)\widehat{k}(\varphi(\lambda))=\varphi(\lambda)\cdot \frac{(\varphi(\lambda)+a)^\alpha-a^\alpha}{\varphi(\lambda)}
=\left[\left(\lambda^\beta+a^\alpha\right)^{1/\alpha}-a+a\right]^\alpha-a^\alpha=\lambda^\beta
=\lambda \widehat{l}(\lambda).
\]
Moreover, $\left(\lambda^\beta+a^\alpha\right)^{1/\alpha}$ is $\mathcal{CBF}$ by Proposition \ref{thm:CBF-tempered-power}
with $\gamma=1/\alpha$, since $\beta\cdot\frac1\alpha<1$; subtracting the constant $a$ leaves the derivative
unchanged and yields $\varphi(0+)=0$, so $\varphi$ is $\mathcal{BF}$; moreover,
$\varphi(\lambda)\sim\lambda^{\beta/\alpha}$ as $\lambda\to\infty$, so $\varphi$ is driftless with
$\varphi(\infty)=\infty$.
Applying Theorem \ref{thm:PC-subordination}, we obtain
\begin{equation*}
v(t)=\int_0^\infty p_\varphi(s,t)u(s)\,ds, \qquad (t>0),
\end{equation*}
where $p_\varphi(s,t)$ is the density of the inverse subordinator with Laplace exponent $\varphi(\cdot)$.
In other words, the solution of the $\beta$-order fractional diffusion equation is subordinated to the solution of
the tempered $\alpha$-stable type equation.
\end{example}

\begin{example}[A negative result]\label{ex:stable-to-tempered}
Let $b>0$, $0<\beta<\alpha<1$, $k(t)=g_{1-\alpha}(t)$, and let $l(t)$ be such that
\[
\widehat{l}(\lambda)=\frac{(\lambda+b)^\beta-b^\beta}{\lambda}.
\]
Then $l(t)$ is a function of type $(\mathcal{PC})$.

From the condition \eqref{eq:PC-condition}, we obtain
\[
\varphi(\lambda)=\left[(\lambda+b)^\beta-b^\beta\right]^{1/\alpha}.
\]
However, since $\frac1\alpha>1$, Proposition \ref{prop:tempered-iff} shows that $\varphi(\lambda)$ is {\it not} a
Bernstein function. Therefore, one cannot build a subordination representation of the form
\eqref{eq:PC-subordination} from the $\alpha$-order fractional diffusion equation to the tempered $\beta$-stable
type equation.

In the boundary case $\alpha=1$, however, $k=\delta_0$ (cf. Remark \ref{rem:PC-boundary}), the source equation
reduces to the classical equation $u'=Au$, and
$\varphi(\lambda)=(\lambda+b)^\beta-b^\beta$ is the Laplace exponent of the tempered $\beta$-stable
subordinator; the resulting subordination from the heat equation to the tempered $\beta$-stable type equation
is therefore valid. This is the well-known classical case
\cite{Alrawashdeh2017CMA,KumarVellaisamy2015}.
\end{example}

\begin{example}\label{ex:tempered-to-tempered}
Let $a,b>0$ and $0<\beta<\alpha\leq 1$. Moreover, assume $k(t)$ and $l(t)$ satisfy
\[
\widehat{k}(\lambda)=\frac{(\lambda+a)^\alpha-a^\alpha}{\lambda}, \qquad \widehat{l}(\lambda)=\frac{(\lambda+b)^\beta-b^\beta}{\lambda}.
\]
Then both $k(t)$ and $l(t)$ are functions of type $(\mathcal{PC})$.

Let
\[
\varphi(\lambda)=\left[(\lambda+b)^\beta+a^\alpha-b^\beta\right]^{1/\alpha}-a.
\]
Then we have
\[
\varphi(\lambda)\widehat{k}(\varphi(\lambda))=(\varphi(\lambda)+a)^\alpha-a^\alpha=(\lambda+b)^\beta-b^\beta
=\lambda \widehat{l}(\lambda).
\]
The Bernstein property of $\varphi$ is completely decided by Proposition \ref{prop:shifted-power-iff}:
if $\alpha<1$, then $\varphi$ is $\mathcal{BF}$ if and only if $a^\alpha\geq b^\beta$; if $\alpha=1$, then
$\varphi(\lambda)=(\lambda+b)^\beta-b^\beta$ is $\mathcal{BF}$ for all $a,b>0$. Moreover, $\varphi(0+)=0$,
and $\varphi(\lambda)\sim\lambda^{\beta/\alpha}$ as $\lambda\to\infty$, so $\varphi$ is driftless with
$\varphi(\infty)=\infty$. Hence, whenever $a^\alpha\geq b^\beta$ (and for arbitrary $a,b>0$ if $\alpha=1$),
Theorem \ref{thm:PC-subordination} yields the subordination representation
\[
v(t)=\int_0^\infty p_\varphi(s,t)u(s)\,ds, \qquad (t>0).
\]
\end{example}

\begin{rem}[The remaining case]\label{rem:open-tempered}
In Example \ref{ex:tempered-to-tempered}, the remaining case $a^\alpha<b^\beta$ (with $\alpha<1$) is settled
negatively by Proposition \ref{prop:shifted-power-iff}: $\varphi$ fails to be $\mathcal{BF}$, so no
subordination representation of the form \eqref{eq:PC-subordination} exists between these two tempered
equations in that case. The second-order test at the origin detects only a smaller range: writing
$u(\lambda)=(\lambda+b)^\beta+c$ with $c=a^\alpha-b^\beta$, a direct computation gives
\[
\varphi''(\lambda)=\frac{\beta}{\alpha}\,(\lambda+b)^{\beta-2}\,u(\lambda)^{1/\alpha-2}
\Big[\big(\tfrac{\beta}{\alpha}-1\big)(\lambda+b)^{\beta}+(\beta-1)c\Big],
\]
and the expression in brackets at $\lambda=0$ equals $(1-\beta)(b^\beta-a^\alpha)-(1-\beta/\alpha)\,b^\beta$,
which is positive exactly when
\[
\frac{a^\alpha}{b^\beta}<\frac{\beta(1-\alpha)}{\alpha(1-\beta)},
\]
a threshold strictly smaller than $1$ that tends to $0$ as $\alpha\to1$ (in agreement with the case
$\alpha=1$, where $\varphi$ is always $\mathcal{BF}$). Hence for $a^\alpha<b^\beta$ close to the boundary,
the failure of the Bernstein property escapes the second-order test, and the global analytic
argument in the proof of Proposition \ref{prop:shifted-power-iff} is indispensable. Note also that the
obstruction here concerns powers \emph{larger than one} of the complete Bernstein function
$(\lambda+b)^\beta+c$, in contrast with the operation $f(z^\alpha)^{1/\alpha}$, $\alpha\in(0,1)$,
discussed in Remark \ref{rem:BGT-open-question}.
\end{rem}

\begin{rem}[Tempered stable subordinators and tempered fractional derivatives]\label{rem:tempered-literature}
Examples \ref{ex:tempered-to-stable}--\ref{ex:tempered-to-tempered} involve equations of tempered stable type.
The inverse tempered stable subordinator has been studied in detail in
\cite{Alrawashdeh2017CMA,KumarVellaisamy2015};
in particular, its density exists for every $t>0$, in agreement with Remark \ref{rem:density-allt}.

We point out that the tempered fractional derivative is not defined in a uniform way in the literature; see,
e.g., \cite{LiDengZhao2019,Mali2022MMAS,Ortigueira2021}. The operator $D_t(k\ast(u(\cdot)-u_0))$ used here is
the convolution-type derivative in the sense of \cite{Kochubei2011} associated with the Laplace exponent
$\lambda\widehat{k}(\lambda)=(\lambda+b)^\alpha-b^\alpha$ of the driftless tempered stable subordinator; in
particular it annihilates constants. The shift $-b^\alpha$ is essential: it makes the symbol vanish at the
origin, so that $(\lambda+b)^\alpha-b^\alpha$ is the exponent of a subordinator without killing; the unshifted
symbol $(\lambda+b)^\alpha$ of the Riemann--Liouville-type (substantial) or Caputo-type tempered derivatives
\cite{LiDengZhao2019,Ortigueira2021} corresponds to a killed subordinator and falls outside the framework of
Theorem \ref{thm:PC-subordination}. Even after adding the shift $-b^\alpha u(t)$, the Caputo-type tempered
derivative of \cite{LiDengZhao2019} still has the initial-value term $(\lambda+b)^{\alpha-1}u_0$ in place of
$\frac{(\lambda+b)^\alpha-b^\alpha}{\lambda}u_0$; the two operators coincide only when $b=0$ or $\alpha=1$.
\end{rem}

\subsection{Nonlocal-in-time telegraph equations}\label{subsec:telegraph}

Nonlocal-in-time telegraph equations and the associated telegraph processes with random time have been studied
recently; see \cite{Alegria2023JDE}. In this subsection we establish the subordination between two such equations.

\begin{thm}\label{thm:telegraph-subordination}
Consider
\begin{equation}\label{eq:TE-u}
\begin{cases}
D_t^2(k\ast k\ast (u(\cdot)-u_0))(t)+2a D_t(k\ast (u(\cdot)-u_0))(t)=Au(t), \qquad t>0; \\
u(0)=u_0
\end{cases}
\end{equation}
and
\begin{equation}\label{eq:TE-v}
\begin{cases}
D_t^2(l\ast l\ast (v(\cdot)-u_0))(t)+2a D_t(l\ast (v(\cdot)-u_0))(t)=Av(t), \qquad t>0; \\
v(0)=u_0
\end{cases}
\end{equation}
where both $k(t),l(t)$ are functions of type $(\mathcal{PC})$.
Suppose that both problems admit exponentially bounded strong solutions and that there exists a Laplace exponent
$\varphi(\cdot)$ such that
\[
\varphi(\lambda)\widehat{k}(\varphi(\lambda))=\lambda \widehat{l}(\lambda), \qquad (\lambda>0).
\]
Assume moreover that the solution of \eqref{eq:TE-u} is bounded, i.e., there exists a constant $M$ such that
$\|u(t)\|\leq M$ for all $t\geq0$.

Then there exists an inverse subordinator $\mathbf{E}_\varphi(t)$ with Laplace exponent $\varphi(\cdot)$ such that
\begin{equation}\label{eq:TE-subordination}
v(t)=\int_0^\infty u(s)P(ds,t), \qquad (t>0),
\end{equation}
where $P(s,t)$ is the distribution of $\mathbf{E}_\varphi(t)$.
\end{thm}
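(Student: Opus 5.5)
The plan is to copy the Laplace-transform argument of Theorem \ref{thm:PC-subordination}; the only new ingredient is the second-order term. First compute $\widehat u$. Since $k\ast k\ast(u(\cdot)-u_0)$ and its first derivative vanish at $t=0$ (this is part of the strong-solution concept for telegraph-type equations), the Laplace transform of $D_t^2(k\ast k\ast(u-u_0))$ at a point $\mu>0$ is
\[
\mu^2\widehat{k}(\mu)^2\Bigl(\widehat u(\mu)-\frac{u_0}{\mu}\Bigr).
\]
The Laplace transform of $D_t(k\ast(u-u_0))$ is $\mu\widehat k(\mu)\bigl(\widehat u(\mu)-u_0/\mu\bigr)$. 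Set $K(\mu)=\mu\widehat k(\mu)$. Transforming \eqref{eq:TE-u} then gives
\[
\bigl(K(\mu)^2+2aK(\mu)\bigr)\widehat u(\mu)-\frac{K(\mu)^2+2aK(\mu)}{\mu}u_0=A\widehat u(\mu),
\]
and hence
\[
\widehat u(\mu)=\frac{K(\mu)^2+2aK(\mu)}{\mu}\bigl[K(\mu)^2+2aK(\mu)-A\bigr]^{-1}u_0
\]
for $\mu$ large enough that $K(\mu)^2+2aK(\mu)\in\rho(A)$. The same computation with $L(\lambda)=\lambda\widehat l(\lambda)$ gives the analogous formula for $\widehat v(\lambda)$. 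Equivalently, one can rewrite $\widehat u(\mu)=\frac{1}{\mu}Q(K(\mu))[Q(K(\mu))-A]^{-1}u_0$ with $Q(x)=x^2+2ax$, in the spirit of the shared outer symbol of Proposition \ref{prop:shared-outer}.

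Next, the hypothesis $\varphi(\lambda)\widehat k(\varphi(\lambda))=\lambda\widehat l(\lambda)$ reads $K(\varphi(\lambda))=L(\lambda)$. Therefore $Q(K(\varphi(\lambda)))=Q(L(\lambda))$, and
\[
\frac{\varphi(\lambda)}{\lambda}\,\widehat u(\varphi(\lambda))=\frac{1}{\lambda}\,Q(L(\lambda))\bigl[Q(L(\lambda))-A\bigr]^{-1}u_0=\widehat v(\lambda).
\]
Since $\varphi$ is a Laplace exponent, the inverse subordinator $\mathbf{E}_\varphi(t)$ exists, and Proposition \ref{prop:inverse-subordinator-Laplace} gives its distribution $P(ds,t)$ with Laplace transform $\frac{\varphi(\lambda)}{\lambda}e^{-s\varphi(\lambda)}ds$. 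Because $u$ is bounded by $M$, Fubini's theorem applies, and the Laplace transform of $t\mapsto\int_0^\infty u(s)P(ds,t)$ equals $\frac{\varphi(\lambda)}{\lambda}\widehat u(\varphi(\lambda))=\widehat v(\lambda)$.

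To conclude, I invoke uniqueness of the Laplace transform. This gives equality for a.e. $t$, and right-continuity of both sides upgrades it to every $t$, exactly as in the proof of Theorem \ref{thm:MTFDE-to-STFDE}. The right-hand side equals $\mathbb{E}[u(\mathbf{E}_\varphi(t))]$, and $v$ is continuous.

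The main obstacle is the domain issue. The identity for $\widehat u(\mu)$ holds only for $\mu$ beyond the exponential growth bound. Boundedness of $u$ fixes this: $\widehat u$ is then defined for all $\mu>0$, so the resolvent formula extends to every $\mu>0$ by analyticity and identity in $\mu$, and in particular it holds at $\mu=\varphi(\lambda)$. For $\widehat v$ it suffices to take $\lambda$ large, because two Laplace transforms agreeing on a half-line already agree. A secondary point is checking the initial-term bookkeeping in the second-order Laplace formula. This is where the vanishing of $k\ast k\ast(u-u_0)$ and of its derivative at $0$ is used, which is why it was built into the solution concept.
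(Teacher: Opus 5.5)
Your proposal is correct and follows the paper's proof essentially step by step. You Laplace-transform both equations using the vanishing of the convolution terms at $t=0$, match the quadratic symbols via $\varphi(\lambda)\widehat{k}(\varphi(\lambda))=\lambda\widehat{l}(\lambda)$, apply Fubini using the boundedness of $u$, and conclude by Laplace uniqueness plus right-continuity.

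Your remark on the domain goes beyond what the paper writes, but the phrase about extending the resolvent formula by analyticity is slightly loose, since $Q(K(\mu))\in\rho(A)$ is not known a priori for small $\mu$. A cleaner route avoids inverting at small $\mu$ altogether. For bounded $u$, the identity $(Q(K(\mu))-A)\widehat{u}(\mu)=\mu^{-1}Q(K(\mu))u_0$ holds for every $\mu>0$. Hence $\frac{\varphi(\lambda)}{\lambda}\widehat{u}(\varphi(\lambda))$ and $\widehat{v}(\lambda)$ solve the same equation, and they coincide for all large $\lambda$ with $Q(L(\lambda))\in\rho(A)$, which is all that Laplace uniqueness needs.
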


\begin{proof}
Since
\[
\begin{cases}
k\ast (u(\cdot)-u_0)(t)\mapsto \widehat{k}(\varphi)   \left(\widehat{u}(\varphi)-\frac{1}{\varphi}u_0 \right) ,   \\
k\ast k\ast (u(\cdot)-u_0)(t)\mapsto \widehat{k}^2 (\varphi)    \left(\widehat{u}(\varphi)-\frac{1}{\varphi}u_0 \right), \\
(u(\cdot)-u_0)(0)=0,
\end{cases}
\]
and the convolutions, together with their first derivatives, vanish at $t=0$,
taking the Laplace transform on both sides of \eqref{eq:TE-u} leads to
\[
\varphi^2\left[ \widehat{k}^2 (\varphi)    \left(\widehat{u}(\varphi)-\frac{1}{\varphi}u_0 \right) \right]
+2a\varphi\left[ \widehat{k} (\varphi)    \left(\widehat{u}(\varphi)-\frac{1}{\varphi}u_0 \right) \right]=A\widehat{u}(\varphi).
\]
It follows that
\begin{equation}\label{eq:u-hat-TE}
\widehat{u}(\varphi)=\frac{\varphi^2\widehat{k}^2 (\varphi)+2a\varphi\widehat{k} (\varphi)}{\varphi}\cdot \left[\varphi^2\widehat{k}^2 (\varphi)+2a\varphi\widehat{k} (\varphi)-A\right]^{-1}u_0.
\end{equation}

Similarly, we have
\begin{equation}\label{eq:v-hat-TE}
\widehat{v}(\lambda)=\frac{\lambda^2\widehat{l}^2 (\lambda)+2a\lambda\widehat{l} (\lambda)}{\lambda}\cdot \left[\lambda^2\widehat{l}^2 (\lambda)+2a\lambda\widehat{l} (\lambda)-A\right]^{-1}u_0.
\end{equation}

Given the Laplace exponent $\varphi(\cdot)$, there exists an inverse subordinator $\mathbf{E}_\varphi(t)$ with
distribution $P(s,t)$ satisfying
\[
\int_0^\infty e^{-\lambda t}P(ds,t)dt=\frac{\varphi(\lambda)}{\lambda}e^{-s\varphi(\lambda)}ds.
\]

Next we compute the Laplace transform of $\int_0^\infty u(s)P(ds,t)$:
\begin{align*}
&\int_0^\infty e^{-\lambda t}\left(\int_0^\infty u(s)P(ds,t)\right)\,dt \\
=&\int_0^\infty  u(s)\left( \int_0^\infty e^{-\lambda t}P(ds,t)\,dt\right) \\
=&\int_0^\infty \frac{\varphi(\lambda)}{\lambda}e^{-s\varphi(\lambda)} u(s)\,ds \\
=&\frac{\varphi(\lambda)}{\lambda}\cdot
   \frac{\varphi^2\widehat{k}^2 (\varphi)+2a\varphi\widehat{k} (\varphi)}{\varphi}\cdot \left[\varphi^2\widehat{k}^2 (\varphi)+2a\varphi\widehat{k} (\varphi)-A\right]^{-1}u_0\\
=&\frac{\varphi^2\widehat{k}^2 (\varphi)+2a\varphi\widehat{k} (\varphi)}{\lambda}\cdot \left[\varphi^2\widehat{k}^2 (\varphi)+2a\varphi\widehat{k} (\varphi)-A\right]^{-1}u_0\\
=&\frac{\lambda^2\widehat{l}^2 (\lambda)+2a\lambda\widehat{l} (\lambda)}{\lambda}\cdot \left[\lambda^2\widehat{l}^2 (\lambda)+2a\lambda\widehat{l} (\lambda)-A\right]^{-1}u_0\\
=&\widehat{v}(\lambda),
\end{align*}
where we used \eqref{eq:u-hat-TE}, the condition $\varphi\widehat{k}(\varphi)=\lambda\widehat{l}(\lambda)$ and
\eqref{eq:v-hat-TE} successively.
The result follows from the uniqueness of the Laplace transform and the right-continuity in $t$ of
both sides, as in the proof of Theorem \ref{thm:MTFDE-to-STFDE}.
\end{proof}

\begin{example}\label{ex:fractional-telegraph}
Let $k(t)=g_{1-\alpha}(t)$, $l(t)=g_{1-\beta}(t)$, where $0<\beta<\alpha\leq 1$. As in Example
\ref{ex:stable-to-stable}, $\varphi(\lambda)=\lambda^{\beta/\alpha}$ is a Laplace exponent satisfying
$\varphi\widehat{k}(\varphi)=\lambda\widehat{l}$. Applying Theorem \ref{thm:telegraph-subordination}, we
obtain $v(t)=\int_0^\infty p(s,t)u(s)\,ds$ ($t>0$), with $p(s,t)$ the density of the inverse of the
$\beta/\alpha$-stable subordinator.

It is worth emphasizing that, in contrast with Theorem \ref{thm:multi-to-multi}, no order-gap condition
between the two equations is required here: the highest order $2\beta$ of the second equation is allowed to
exceed the lowest order $\alpha$ of the first one, which happens exactly when $\beta/\alpha>1/2$.
The reason is that the two equations share the same outer symbol $Q(\mu)=\mu^2+2a\mu$ with the same
damping coefficient $a$: their characteristics are $F(\lambda)=Q(\lambda^\alpha)$ and
$G(\lambda)=Q(\lambda^\beta)$, respectively, so Proposition \ref{prop:shared-outer} (see also Remark
\ref{rem:shared-outer-telegraph}) gives
\[
F^{-1}\circ G(\lambda)=\lambda^{\beta/\alpha},
\]
which is $\mathcal{BF}$ for every $0<\beta<\alpha$.
Equivalently, the condition $\varphi\widehat{k}(\varphi)=\lambda\widehat{l}$ in Theorem
\ref{thm:telegraph-subordination} only matches the first-order symbols $\lambda\widehat{k}(\lambda)$ and
$\lambda\widehat{l}(\lambda)$; the second-order terms then match automatically, since they are the same
quadratic function of the matched symbols.
Thus, within this structured telegraph-type setting, the subordination from
$D_t^{2\alpha}u+2aD_t^{\alpha}u=Au$ to $D_t^{2\beta}v+2aD_t^{\beta}v=Av$ holds for all $0<\beta<\alpha$,
whereas Theorem \ref{thm:multi-to-multi} would require $2\beta\leq\alpha$.
The boundary case $\alpha=1$, $\beta=\frac12$ (i.e., $2\beta=\alpha$) is exactly the case of
Example \ref{ex:telegraph-to-fractional-telegraph}, where the corresponding concrete equations and their
stochastic interpretation via the telegraph process are given, while the range $\beta\in(\frac12,1)$ is beyond
the reach of Theorem \ref{thm:multi-to-multi}.
\end{example}

\begin{rem}\label{rem:APP-comparison}
We close this section with a comparison with the work of Alegr{\'i}a, Poblete and Pozo
\cite{Alegria2023JDE}, where the equation \eqref{eq:TE-u} with $A=c^2\partial_x^2$ and the delta initial
datum was studied.
\begin{itemize}
\item[(i)] Theorem 4.1 of \cite{Alegria2023JDE} corresponds to the special case of Theorem
\ref{thm:telegraph-subordination} in which the source equation is the classical telegraph equation
(i.e., $\widehat{k}\equiv 1$): the condition $\varphi\widehat{k}(\varphi)=\lambda\widehat{l}$ then reads
$$
\varphi(\lambda)=\lambda\widehat{l}(\lambda)=\frac{1}{\widehat{l}_1(\lambda)},
$$
where $(l,l_1)\in(\mathcal{PC})$, and $1/\widehat{l}_1$ is $\mathcal{BF}$ by the $(\mathcal{PC})$ condition
(see \cite[Proposition 4.5]{Pruss1993book}). Moreover, the propagation function used in
\cite{Alegria2023JDE} (with the parameter $\sigma=1$) has the Laplace transform
$\frac{\varphi(\lambda)}{\lambda}e^{-s\varphi(\lambda)}$ with respect to $t$; it therefore coincides with
the distribution $P(s,t)$ of the inverse subordinator $\mathbf{E}_\varphi(t)$ in Theorem
\ref{thm:telegraph-subordination}.
\item[(ii)] Theorem \ref{thm:telegraph-subordination} is more general in three aspects: it requires no
$1$-regularity or sectoriality of the conjugate kernel (needed in \cite{Alegria2023JDE} to obtain a density
of the propagation function and to carry out the Fourier-side arguments), since the representation is
formulated directly with the distribution $P(ds,t)$; it applies to a general operator $A$ and general
initial data, rather than only $A=c^2\partial_x^2$ with the delta datum; and it allows two arbitrary kernels
of type $(\mathcal{PC})$ whenever $\varphi\widehat{k}(\varphi)=\lambda\widehat{l}$ holds for a Laplace
exponent $\varphi$.
\item[(iii)] On the stochastic side, \cite[Theorem 5.1]{Alegria2023JDE} represents the telegraph process
with random time governed by \eqref{eq:TE-u} as $\mathbf{T}(|\mathbf{W}(t)|)$, where $\mathbf{T}(t)$ is the
classical telegraph process and $|\mathbf{W}(t)|$ is built from the folded fundamental solution of a nonlocal
wave-type equation. This process-level representation is complementary to the inverse-subordinator time
change used here; in the time-fractional case both reduce to the time change by $|\mathbf{B}(t)|$ in Example
\ref{ex:telegraph-to-fractional-telegraph} \cite[Example 5.1]{Alegria2023JDE}.
\end{itemize}
\end{rem}

\end{document}